\tikzset{negated/.style={
        decoration={markings,
            mark= at position 0.5 with {
                \node[transform shape] (tempnode) {$\backslash$};
            }
        },
        postaction={decorate}
    }
}
    \def\tagform@#1{\maketag@@@{%
     \textbf{(\ignorespaces#1\unskip\@@italiccorr)}}}%
     \renewcommand{\eqref}[1]{\textup{\maketag@@@{(\ignorespaces%
          {\ref{#1}}\unskip\@@italiccorr)}}}
  \newcommand\address[1]{}
  \newcommand\email[1]{}
  \newcommand\dedicatory[1]{}
  \theoremstyle{plain}
  \newtheorem{theorem}[equation]{Theorem}
  \newtheorem{proposition}[equation]{Proposition}
  \newtheorem{corollary}[equation]{Corollary}
  \newtheorem{lemma}[equation]{Lemma}
  \theoremstyle{definition}
  \newtheorem{definition}[equation]{Definition}
  \newtheorem{remark}[equation]{Remark}
  \newtheorem{example}[equation]{Example}
  \numberwithin{equation}{section}
  \newcommand{\bsf}[1]{\ensuremath{\bm{\mathsf{#1}}}\xspace}
  \newcommand{\cat}{\ensuremath{\bsf{C}\xspace}}
  \newcommand{\CLip}{\ensuremath{\bsf{CLip}\xspace}}
  \newcommand{\Born}{\ensuremath{\bsf{Coarse}\xspace}}
  \newcommand{\barcat}{\overline{\cat}}
  \newcommand{\barCL}{\overline{\CLip}}
  \newcommand{\barBorn}{\overline{\Born}}
  \DeclareMathOperator{\Eq}{Eq}
  \DeclareMathOperator{\Coeq}{Coeq}
  \DeclareMathOperator{\Rips}{Rips}
  \newcommand{\set}[1]{\ensuremath{\left\{ {#1} \right\}}\xspace}
  \newcommand{\st}{\ensuremath{\,\, \colon \,\,}\xspace}
  \newcommand{\from}{\ensuremath{\colon \thinspace}\xspace}
  \newcommand{\Z}{\ensuremath{\mathbb{Z}}\xspace}
  \newcommand{\N}{\ensuremath{\mathbb{N}}\xspace}
  \newcommand{\R}{\ensuremath{\mathbb{R}}\xspace}
  \newcommand{\Hy}{\ensuremath{\mathbb{H}}\xspace}
  \newcommand{\proofof}[1]{\hfill\newline\noindent\emph{Proof of {#1}.} }
  \newcommand{\param}{{\mathchoice{\mkern1mu\mbox{\raise2.2pt\hbox{$
  \centerdot$}}
  \mkern1mu}{\mkern1mu\mbox{\raise2.2pt\hbox{$\centerdot$}}\mkern1mu}{
  \mkern1.5mu\centerdot\mkern1.5mu}{\mkern1.5mu\centerdot\mkern1.5mu}}}
   \par \texttt{robert.tang@xjtlu.edu.cn}}}
\begin{document}

      \title{Epimorphism classes and relatively maximal metrics in large-scale geometry}
    \author{Robert Tang} \date{15 December 2025}
  \maketitle \thispagestyle{empty}

  \begin{abstract}

  We consider epimorphisms and several variant notions -- split, effective, regular, strong, and extremal -- and determine which of these coincide in the metric coarse and coarsely Lipschitz categories. In particular, we characterise extremal epis in the coarsely Lipschitz category via a relative maximality condition on the codomain metric; this can be viewed as a morphism-relative analogue of Rosendal's maximal metrics for topological groups.

  \end{abstract}

  \emph{MSC2020 Classification}: 51F30, 20F65\\
  \emph{Keywords}: metric coarse category, coarsely Lipschitz category, coarse quotient, regular epimorphism, extremal epimorphism, maximal metric

  \section{Introduction}

  In this note, we study several classes of epimorphisms from the viewpoint of large-scale geometry. We first consider the metric coarse category $\barBorn$, whose objects are extended metric spaces and whose morphisms are closeness classes of controlled maps. In $\barBorn$, the epimorphisms are the coarsely surjective maps and the split epimorphisms are the coarse retractions. Our first main result records which intermediate epi-classes  coincide. Here, the straight implication arrows hold in any category; we establish the curved arrows.

  \begin{theorem}\label{thm:epi-coarse}
   The following implications hold in $\barBorn$.
   \begin{center}
   \begin{tikzcd}
	{\textrm{Effective epi}} & {\textrm{Regular epi}} & {\textrm{Strong epi}} & {\textrm{Extremal epi}} & {\textrm{Epi}}\\
	 {\textrm{Split epi}}
	\arrow[from=2-1, to=1-2, Rightarrow]
	\arrow[from=2-1, to=1-1, Rightarrow, negated, bend left = 20]
	\arrow[from=1-1, to=2-1, Rightarrow, negated, bend left = 20]
	\arrow[from=1-1, to=1-2, Rightarrow]
	\arrow[from=1-2, to=1-3, Rightarrow]
	\arrow[from=1-3, to=1-2, Rightarrow, negated, bend right = 20]
	\arrow[from=1-2, to=1-1, Rightarrow, negated, bend right = 20]
	\arrow[from=1-2, to=2-1, Rightarrow, negated, bend left = 20]
	\arrow[from=1-3, to=1-4, Rightarrow]
	\arrow[from=1-4, to=1-5, Rightarrow]
	\arrow[from=1-5, to=1-3, Rightarrow, bend right = 12, shift right = 1]
    \end{tikzcd}
    \end{center}
  \end{theorem}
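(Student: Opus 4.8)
The diagonal arrows, and the implication \emph{Effective epi} $\Rightarrow$ \emph{Regular epi}, are formal: every split epi is the coequalizer of the pair $(\mathrm{id}_X, s\circ f)$ for a chosen section $s$; every effective epi is by definition a coequalizer, hence regular; every coequalizer is a strong epi; every strong epi is extremal; every extremal epi is epi. So the work lies in the six curved arrows, and I would begin with the positive one, \emph{Epi} $\Rightarrow$ \emph{Strong epi}. The first step is to identify the monos of $\barBorn$: a morphism $m\colon A\to B$ is monic iff it is \emph{uniformly expanding}, i.e.\ $d_B(m(a),m(a'))\ge\rho\bigl(d_A(a,a')\bigr)$ for some unbounded nondecreasing $\rho$ (equivalently, $m$ is a coarse embedding). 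Sufficiency is a direct estimate; for necessity one tests $m$ against the two maps $W\to A$ sending the $n$-th point to $a_n$, respectively $a'_n$, where $W$ is a countably infinite set carrying the extended metric in which all distances are $\infty$ and $a_n,a'_n$ witness a failure of uniform expansion -- these become equal after post-composing with $m$, but are not close, so $m$ is not monic. (Here the extended-metric objects of $\barBorn$ are genuinely used.) Granting this, let $f\colon X\to Y$ be coarsely surjective and put it in a commuting square with a mono $m\colon A\to B$ on the right, say $m u\approx v f$. Then $m$ corestricts to a coarse equivalence $A\to m(A)$ with coarse inverse $\bar m$; coarse surjectivity of $f$ together with $mu\approx vf$ forces $v(Y)$ into a bounded neighbourhood of $m(A)$, so $d(y):=\bar m(b_y)$ -- for a choice of $b_y\in m(A)$ at uniformly bounded distance from $v(y)$ -- is a well-defined controlled map, and $d f\approx u$, $m d\approx v$ are routine closeness computations; uniqueness of the diagonal is immediate because $m$ is monic. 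Combined with the diagonal implications, this collapses the right end: in $\barBorn$, epi $=$ strong epi $=$ extremal epi (all three being coarse surjectivity).

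To see that strong epis (equivalently, by the above, epis) need not be regular, I would record that coequalizers in $\barBorn$ exist and are computed by a chain (pseudo)metric: the coequalizer of $a,b\colon Z\to X$ is $X$ re-metrized by $\bar d(x,x')=\inf\sum d_X(\cdot,\cdot)$, the infimum over chains from $x$ to $x'$ whose steps are either edges of $X$ or unit-cost ``jumps'' along some $(a(z),b(z))$. Hence a coarsely surjective $f\colon X\to Y$ is a regular epi exactly when $Y$ is coarsely equivalent, via a map close to $f$, to such a re-metrization of $X$; in particular every such chain metric is $+\infty$ on any pair that no chain can join. Now take $X=\coprod_{n\in\mathbb N}\{x_n\}$ with $d(x_n,x_m)=\infty$ for $n\neq m$, $Y=\{2^n:n\in\mathbb N\}\subseteq\mathbb R$ with the subspace metric, and $f(x_n)=2^n$. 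Then $f$ is bijective (so coarsely surjective) but not a coarse equivalence, and it is not regular: any $a,b$ coequalized by $f$ provides only jumps between $x_n,x_m$ with $|2^n-2^m|$ bounded, which for $n$ large forces $m=n$, so the associated chain metric is $+\infty$ on $\{x_n,x_{n+1}\}$ while $d_Y(2^n,2^{n+1})<\infty$; thus $f$ coequalizes no parallel pair. (Over genuine, non-extended, metric spaces this obstruction does not arise.)

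The two non-implications involving \emph{Effective epi} both turn on kernel pairs in $\barBorn$: a regular epi possessing a kernel pair is its coequalizer, hence effective, so a split (a fortiori regular) epi that is not effective must be one whose kernel pair \emph{fails to exist}, whereas an effective epi has one by definition. To see that effective epis need not be split (which also shows regular epis need not be split), take $f\colon\mathbb F_2\to\mathbb Z^2$, the abelianization with word metrics. Its kernel pair exists: the subspace $K=\{(g,h):g^{-1}h\in[\mathbb F_2,\mathbb F_2]\}\subseteq\mathbb F_2\times\mathbb F_2$ does the job, the decisive point being that a $\mathbb Z^2$-element of word length $\le R$ lifts to an $\mathbb F_2$-element of word length $\le R$, so given $g_1,g_2$ with $fg_1\approx fg_2$ one may translate $g_2$ by a uniformly bounded amount into the $\ker f$-coset of $g_1$, giving the required (unique) factorization. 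Moreover $f$ coequalizes $K\rightrightarrows\mathbb F_2$ (the jumps run within cosets of $[\mathbb F_2,\mathbb F_2]$, so the chain metric collapses these and recovers $\mathbb Z^2$), so $f$ is effective; but $f$ admits no coarse section, since a section would be a coarse embedding $\mathbb Z^2\hookrightarrow\mathbb F_2$, impossible as $\operatorname{asdim}\mathbb Z^2=2>1=\operatorname{asdim}\mathbb F_2$. To see that split epis need not be effective (which also shows regular epis need not be effective), I would use a caterpillar: let $X$ be the graph obtained from a ray $\mathbb N$ by attaching a segment of length $2^n$ at the $n$-th vertex (with the path metric), $Y=\mathbb N$, and $f$ the retraction to the spine. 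This $f$ is split via the spine inclusion, but its kernel pair does not exist: the natural candidate (pairs lying in a common tooth) fails to receive the map $w_n\mapsto(\text{tip of tooth }n,\,\text{tip of tooth }n{+}1)$ defined on a discrete-at-infinity domain, since no uniform closeness constant can absorb the tooth-lengths $2^n\to\infty$, and the same unboundedness rules out any other candidate.

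The main obstacle, as signalled, is the kernel-pair analysis behind the two \emph{Effective epi} non-implications: checking carefully that the abelianization's kernel pair exists and that $f$ coequalizes it, and -- harder -- verifying that the caterpillar (or whatever clean example one settles on) has \emph{no} kernel pair in $\barBorn$, which requires an argument ruling out \emph{all} candidate pullbacks, not just the obvious ones. The two structural inputs I would establish first are the characterization of monomorphisms and the chain-metric description of coequalizers.
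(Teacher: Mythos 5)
Your overall architecture is sound and all eleven arrows are accounted for, but you take a genuinely different route from the paper on several fronts. For epi $\Rightarrow$ strong, you build the diagonal lift directly from the characterisation of monos as coarse embeddings; the paper instead deduces it from the self-duality observation that every mono is strong if and only if every epi is (Lemma \ref{lem:strong}), combined with the prior result that all monos in $\barBorn$ are regular. For strong $\not\Rightarrow$ regular, your example $\coprod_n\{x_n\}\to\{2^n\}$ (all domain distances infinite) is correct and shorter than the paper's, since the coequaliser metric $\Coeq(a,b)$ then has only unit-weight glued edges and any pair $a,b$ coequalised by $f$ can only glue $x_n$ to $x_m$ with $|2^n-2^m|$ bounded; but note the paper's comb graph shows the failure persists for connected, genuine (non-extended) metric spaces, which your parenthetical correctly observes your example cannot do. For effective $\not\Rightarrow$ split you use the abelianisation $\mathbb F_2\to\Z^2$ with the asymptotic-dimension obstruction to a coarse section; the paper uses the Heisenberg extension and Kleiner--Leeb. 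Both work; your coset-translation argument for stability of $K_*(f)$ is exactly the paper's Lemma \ref{lem:eff-group} argument (and does not need the kernel $[\mathbb F_2,\mathbb F_2]$ to be finitely generated). Your caterpillar for split $\not\Rightarrow$ effective is the paper's comb retraction in all but the tooth lengths.

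The one genuine gap is the step you yourself flag: showing the caterpillar retraction admits \emph{no} kernel pair, not merely that the obvious candidate fails. Your test-map idea (pairs of tooth-tips on a domain that is discrete at infinity) is the right germ, but as stated it only refutes one candidate. The paper closes this by a general reduction: the kernel pair of $\bar f$ is the equaliser of $\bar f\bar\pi_1,\bar f\bar\pi_2\from X\times X\rightrightarrows Y$ (Proposition \ref{prop:kernel-eq}), and equalisers in $\barBorn$ exist if and only if the filtration $\Eq_*(f\pi_1,f\pi_2)=K_*(f)$ of sublevel sets stabilises up to coarse equivalence (Proposition \ref{prop:equaliser}, imported from prior work). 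Once you have that lemma, your computation that $K_{\sigma+1}(f)\not\subseteq N_r(K_\sigma(f))$ for every $r$ finishes the job, and it also retroactively justifies your assertion that the abelianisation's kernel pair \emph{is} realised by $K_0(f)$. Without some such uniqueness-of-candidates principle, the non-existence claim is incomplete.
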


  Informally, a regular epi behaves like a quotient, while an effective epi is a quotient by a kernel. In $\barBorn$, we show regular epis are those maps whose codomain, up to coarse equivalence, is obtained by ``coarsely gluing'' the domain to itself along some binary relation. Effective epis are a special case of these where the gluing relations coarsely stabilise to a canonical ``coarse kernel'' at some threshold; for example, surjective group homomorphisms with finitely generated image and kernel (Lemma \ref{lem:eff-group}). We characterise these via an explicit stability criterion for the associated coarse quotient and coarse kernel filtrations.

  Next, we turn to the coarsely Lipschitz category $\barCL$, where morphisms are required to have affine upper control. This is the natural setting for studying finitely generated groups equipped with word metrics (unique up to quasi-isometry) and is central to geometric group theory.

  Many descriptions of epi classes from $\barBorn$ carry over to $\barCL$ with quasi-isometry replacing coarse equivalence, but some distinctions emerge.
  In this category, extremal epis are geometrically meaningful: a coarsely Lipschitz map $f \from (X,d_X) \to (Y,d_Y)$ represents an extremal epi if and only if it is coarsely surjective and $d_Y$ is affinely maximal among all coarsely equivalent metrics $d'$ on $Y$ such that the induced map $(X,d_X) \to (Y,d')$ is coarsely Lipschitz (Proposition \ref{prop:ext-reg}). This maximality condition can be viewed as a morphism-relative analogue of Rosendal’s notion of maximal metrics on topological groups \cite{Ros22, Ros23}. Further, we show that an epi in $\barCL$ can be promoted (by replacing the codomain metric with a coarsely equivalent one) to an extremal epi in $\barCL$ precisely when the given morphism represents a regular epi in $\barBorn$; this can be done explicitly via an augmented Rips graph construction (Proposition \ref{prop:promote}). This yields a canonical quasi-isometry class of metrics on the codomain, mirroring Rosendal's results for maximal metrics on Polish groups.

  Our principal structural result is that extremal epis are regular in $\barCL$. In other words, relative maximality of the codomain metric is equivalent to quotient-like behaviour of the map. Since regular epis are stable under pushouts (when they exist) \cite[Proposition 11.18]{AHS06}, it follows that relative maximality persists under coarse gluings in the coarsely Lipschitz setting.

  \begin{theorem}\label{thm:epi-clip}
  The following implications hold in $\barCL$.
   \begin{center}
   \begin{tikzcd}
	{\textrm{Effective epi}} & {\textrm{Regular epi}} & {\textrm{Strong epi}} & {\textrm{Extremal epi}} & {\textrm{Epi}}\\
	 {\textrm{Split epi}}
	\arrow[from=2-1, to=1-2, Rightarrow]
	\arrow[from=2-1, to=1-1, Rightarrow, negated, bend left = 20]
	\arrow[from=1-1, to=2-1, Rightarrow, negated, bend left = 20]
	\arrow[from=1-1, to=1-2, Rightarrow]
	\arrow[from=1-2, to=1-3, Rightarrow]
	\arrow[from=1-5, to=1-4, Rightarrow, negated, bend right = 30]
	\arrow[from=1-2, to=1-1, Rightarrow, negated, bend right = 20]
	\arrow[from=1-2, to=2-1, Rightarrow, negated, bend left = 20]
	\arrow[from=1-3, to=1-4, Rightarrow]
	\arrow[from=1-4, to=1-5, Rightarrow]
	\arrow[from=1-4, to=1-2, Rightarrow, bend right = 12]
    \end{tikzcd}
\end{center}
  \end{theorem}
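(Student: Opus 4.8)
The plan is to sort the arrows into three kinds: those holding in every category; the negations inherited, with quasi-isometry in place of coarse equivalence, from Theorem~\ref{thm:epi-coarse}; and the two new phenomena in $\barCL$, namely the curved arrow \emph{extremal}~$\Rightarrow$~\emph{regular} and the failure of \emph{epi}~$\Rightarrow$~\emph{extremal}. The straight arrows split~$\Rightarrow$~regular, effective~$\Rightarrow$~regular, and regular~$\Rightarrow$~strong~$\Rightarrow$~extremal~$\Rightarrow$~epi are the standard categorical implications~\cite{AHS06}, and nothing changes on passing to the (non-full) subcategory $\barCL$ of $\barBorn$. The negations split~$\nRightarrow$~effective, effective~$\nRightarrow$~split, regular~$\nRightarrow$~effective, and regular~$\nRightarrow$~split are witnessed by the same maps used to prove Theorem~\ref{thm:epi-coarse} (each of which is coarsely Lipschitz), the point being that the universal cones appearing there also consist of coarsely Lipschitz maps, so the relevant limits and colimits are computed in $\barCL$ just as in $\barBorn$.

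For epi~$\nRightarrow$~extremal I would give an explicit coarsely surjective, coarsely Lipschitz map whose codomain metric is not affinely maximal, and appeal to Proposition~\ref{prop:ext-reg}. Take $f=\mathrm{id}\from(\Z,d_X)\to(\Z,d_Y)$ with $d_X(m,n)=|m-n|$ and $d_Y(m,n)=\sqrt{|m-n|}$. Then $d_Y$ is a genuine metric (using $\sqrt{a+b}\le\sqrt a+\sqrt b$), coarsely equivalent to the standard metric, and $f$ is $1$-Lipschitz and onto, so it represents an epi in $\barCL$. But $d_Y$ is not affinely maximal: the standard metric $d'$ on $\Z$ is coarsely equivalent to $d_Y$, the map $(\Z,d_X)\to(\Z,d')$ is coarsely Lipschitz, yet $|m-n|\le A\sqrt{|m-n|}+B$ fails for large $|m-n|$, so $d'\nleq A\,d_Y+B$ for any $A,B$. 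By Proposition~\ref{prop:ext-reg}, $f$ is not an extremal epi.

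The substance of the theorem is the curved arrow extremal~$\Rightarrow$~regular. Let $f\from(X,d_X)\to(Y,d_Y)$ represent an extremal epi in $\barCL$. By Proposition~\ref{prop:ext-reg}, $f$ is coarsely surjective and $d_Y$ is affinely maximal among coarsely equivalent metrics $d'$ on $Y$ for which $(X,d_X)\to(Y,d')$ is coarsely Lipschitz. As $f$ is then in particular an epi in $\barCL$, and can trivially be promoted to an extremal epi (its own codomain metric already works), Proposition~\ref{prop:promote} shows $f$ represents a regular epi in $\barBorn$, and its augmented Rips graph construction produces a metric $d_Y^{\ast}$ on $Y$, coarsely equivalent to $d_Y$, realising $f\from(X,d_X)\to(Y,d_Y^{\ast})$ as the coequaliser in $\barCL$ of the two projections from the associated relation on $X$ --- in particular a regular epi in $\barCL$. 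It then remains to descend from $d_Y^{\ast}$ to $d_Y$. On one side, affine maximality of $d_Y$ forces $d_Y^{\ast}\le A'd_Y+B'$. On the other, unwinding the construction, each edge of the augmented Rips graph joins points at $d_Y$-distance at most a constant $E$ depending only on the Rips parameter and the coarse-Lipschitz constants of $f$, so summing along a $d_Y^{\ast}$-geodesic edge path gives $d_Y\le E\,d_Y^{\ast}$. Hence $d_Y$ and $d_Y^{\ast}$ are affinely equivalent, $\mathrm{id}\from(Y,d_Y^{\ast})\to(Y,d_Y)$ is an isomorphism in $\barCL$, and post-composing the regular epi $f\from(X,d_X)\to(Y,d_Y^{\ast})$ with it --- regular epis being stable under post-composition with isomorphisms --- exhibits $f\from(X,d_X)\to(Y,d_Y)$ as a regular epi.

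The step I expect to demand real care is the bound $d_Y\le E\,d_Y^{\ast}$: coarse equivalence of the two metrics is too weak here, and one must exploit the explicit edge set of the augmented Rips graph together with the \emph{affine} upper control of $f$. This is exactly the feature that distinguishes $\barCL$ from $\barBorn$, and explains why the analogous curved arrow is missing in Theorem~\ref{thm:epi-coarse}, where strong~$\nRightarrow$~regular. The remaining ingredients --- the categorical implications, checking which earlier witnessing maps are coarsely Lipschitz, and the behaviour of regular epis under isomorphism --- I expect to be routine.
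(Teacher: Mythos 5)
Your overall architecture matches the paper's: the straight arrows are the standard categorical implications (Proposition \ref{prop:implications} and Lemma \ref{lem:split}), the negations among split, regular and effective are inherited from the examples used for Theorem \ref{thm:epi-coarse} (the comb retraction and the Heisenberg group), which are stated for $\barcat$ and hence apply verbatim in $\barCL$, and the failure of epi $\Rightarrow$ extremal needs a new witness. Your witness $\mathrm{id}\from(\Z,\lvert m-n\rvert)\to(\Z,\sqrt{\lvert m-n\rvert})$ is valid and is the same phenomenon as the paper's Example \ref{ex:cubes} ($n^3\mapsto n^2$): a coarsely Lipschitz coarse equivalence that is not a quasi-isometry is both epi and mono but not iso in $\barCL$, so the factorisation $\bar f=\bar f\,\bar 1_X$ already kills extremality without any appeal to Proposition \ref{prop:ext-reg}.

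The gap is in the central arrow, extremal $\Rightarrow$ regular. In the paper this \emph{is} implication (1) $\Rightarrow$ (2) of Proposition \ref{prop:ext-reg}, and the proof of the theorem simply cites it; if you may quote that proposition, there is nothing left to prove and your two paragraphs about $d_Y^{\ast}$ are redundant. Read instead as an attempted independent derivation, your argument is circular. You invoke Proposition \ref{prop:ext-reg} to pass from extremality to relative maximality of $d_Y$, but in the paper (1) $\Rightarrow$ (4) is obtained via (1) $\Rightarrow$ (2), the very implication at stake (a direct proof of (1) $\Rightarrow$ (4) does exist --- it is essentially the (4) $\Rightarrow$ (1) argument run backwards --- but you would need to supply it). Worse, you then invoke Proposition \ref{prop:promote} (4) $\Rightarrow$ (1) to conclude that $\bar f$ is regular in $\barBorn$, and the paper's proof of that step applies Corollary \ref{cor:reg-epi-enlarge} to the extremal epi $\bar e$, i.e.\ it already presupposes that extremal epis in $\barCL$ are regular. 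The genuine content of the paper's argument, which your proposal does not reproduce, is the chain of Lemmas \ref{lem:ext-qi}, \ref{lem:image-qi}, \ref{lem:lower} and \ref{lem:ext-stab}: taking the exponential weight $\Theta(t)=2^t$, the map $Q_\sigma(f)\to U^\Theta_\sigma$ is a quasi-isometry, $U^\Theta_\sigma\hookrightarrow Y^\Theta_\sigma$ is a quasi-isometry, $Y^\Theta_\sigma\to Y^\Theta_\infty$ is an isometry for $\sigma$ large (this is where extremality is actually used, in Lemma \ref{lem:ext-stab}), and $Y^\Theta_\infty\to Y$ admits a lower control; hence $\bar f_\sigma\from Q_\sigma(f)\to Y$ is monic, extremality applied to the factorisation $X\to Q_\sigma(f)\to Y$ makes $\bar f_\sigma$ an isomorphism, and Lemma \ref{lem:regular-coarse} yields regularity. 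Your closing descent from $d_Y^{\ast}$ to $d_Y$ is fine as far as it goes (and the bound you flag as delicate is just Lemma \ref{lem:lower} plus maximality), but it sits downstream of the circular step and does not repair it.
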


   It is worth contrasting Theorems \ref{thm:epi-coarse} and \ref{thm:epi-clip} with their dual counterparts. In prior work \cite{Tang-mono}, it was shown that $\barBorn$ and $\barCL$ are coregular categories; this implies that the effective, regular, strong, and extremal monos coincide. Moreover, the regular monos in $\barBorn$ (resp.~$\barCL$) are identified with the coarse (resp.~quasi-isometric) embeddings.

    \begin{theorem}[{\cite[Corollary 1.4, Proposition 1.5, Theorem 1.8]{Tang-mono}}]\label{thm:regmono}
       In $\barBorn$ (or $\barCL$), the effective, regular, strong, and extremal monos coincide, while the split monos form a strictly stronger class. Every mono in $\barBorn$ is regular; while the same does not hold in $\barCL$. \qed
  \end{theorem}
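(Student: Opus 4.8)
\emph{Proof strategy.}
Since the statement is extracted from \cite{Tang-mono}, the aim is to recall the structural features of $\barBorn$ and $\barCL$ responsible for it, and I would organise everything around the fact (established there) that both categories are \emph{coregular}: each has finite limits, finite colimits, a factorization of every morphism as an epi followed by a regular mono, and regular monos closed under pushout. Granting this, the coincidence of the effective, regular, strong, and extremal monos is a purely formal dualization of the usual regular-category argument for epis. In any category with cokernel pairs a regular mono is the equalizer of its own cokernel pair, so the effective and regular monos agree; in any category with pullbacks the extremal and strong monos agree; and a strong mono $m$ is regular because, writing $m=m'\circ e$ for its $(\textrm{epi},\textrm{regular mono})$ factorization, the lifting property of $m$ against the epi $e$ exhibits $e$ as a split mono, hence an isomorphism, so $m\cong m'$. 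The finite limits and colimits used here were built in \cite{Tang-mono}; the step I expect to be the genuine obstacle is the construction of coequalizers and pushouts in these categories -- the ``coarse gluing'' and augmented Rips-graph constructions alluded to in the Introduction -- and the proof that the pushout of a coarse (resp.\ quasi-isometric) embedding along an arbitrary morphism is again such an embedding, which is the real content of coregularity.

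To make the regular monos explicit I would exhibit the factorization directly: a controlled (resp.\ coarsely Lipschitz) map $f\from X\to Y$ factors as the corestriction $X\to f(X)$, which is coarsely surjective and hence an epi, followed by the inclusion of $f(X)$ with its subspace metric. One then checks that a subspace inclusion $j\from S\hookrightarrow B$ is a regular mono by computing its cokernel pair, the two canonical maps $p_1,p_2\from B\to B\sqcup_S B$ (two copies of $B$ glued along $S$): the glued distance $d(p_1 b,p_2 b)$ equals $2\,d_B(b,S)$, so any morphism $g$ into $B$ for which $p_1 g\close p_2 g$ has image in a bounded neighbourhood of $S$ and hence factors, uniquely up to $\close$, through $j$. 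Thus $j=\Eq(p_1,p_2)$ is even an effective mono; and since any coarse (resp.\ quasi-isometric) embedding is isomorphic in the category to such a subspace inclusion, the coarse (resp.\ quasi-isometric) embeddings are precisely the regular monos.

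The last two assertions both rely on extended metrics. Suppose $f\from X\to Y$ in $\barBorn$ is \emph{not} a coarse embedding, and choose $a_n,b_n\in X$ with $d_Y(f(a_n),f(b_n))\le R$ for all $n$ but $d_X(a_n,b_n)\to\infty$. Let $W=\{w_n\}_{n\in\N}$ carry the extended metric with $d_W(w_n,w_m)=\infty$ for $n\ne m$; then every function out of $W$ is controlled, so $g_1\from w_n\mapsto a_n$ and $g_2\from w_n\mapsto b_n$ are morphisms $W\to X$ with $fg_1\close fg_2$ but not $g_1\close g_2$, contradicting monicity of $f$. Hence every mono in $\barBorn$ is a coarse embedding and so, by the previous paragraph, regular. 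The same test object shows every mono in $\barCL$ is a coarse embedding -- the maps out of $W$ are vacuously coarsely Lipschitz -- but a coarse embedding need not meet the \emph{linear} lower bound defining a quasi-isometric embedding, and so need not be a regular mono in $\barCL$: a witness is the inclusion of an exponentially distorted cyclic subgroup, say $\Z\cong\langle a\rangle\hookrightarrow\mathrm{BS}(1,2)=\langle a,t\mid tat^{-1}=a^2\rangle$, which is $1$-Lipschitz and a coarse embedding (the word length of $a^n$ grows logarithmically in $n$) yet not a quasi-isometric embedding. Finally, a split mono $\iota$ with retraction $\rho$ is the equalizer of $\iota\rho$ and the identity, hence regular, while strictness is witnessed already by the inclusion $\emptyset\hookrightarrow X$ of the initial object into any non-empty $X$: it is an effective mono -- its cokernel pair is the pair of maps $X\to X\sqcup X$, whose equalizer is $\emptyset$ -- but admits no retraction, there being no morphism $X\to\emptyset$.
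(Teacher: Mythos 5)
The paper offers no proof of this statement---it is imported verbatim from \cite{Tang-mono}---and your reconstruction is correct and follows essentially the same route as the machinery the paper itself develops: coarse gluings realise cokernel pairs (so isometric subspace inclusions are effective monos, and hence so is every coarse, resp.\ quasi-isometric, embedding), the equaliser filtration shows regular monos are subspace inclusions up to isomorphism, and the test space with pairwise infinite distances shows every mono admits a lower control. Two cosmetic slips, neither of which affects the argument: the glued distance is $2\,d_B(b,S)+1$ rather than $2\,d_B(b,S)$, since the glued edges carry weight $1$; and the equivalence of extremal and strong \emph{monos} uses pushouts, not pullbacks.
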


      In summary, our results demonstrate that coarse analogues of quotients and kernels depend more on categorical context than do coarse analogues of embeddings.

  \textbf{Relation to previous work.}
  The notion of a \emph{coarse quotient} between metric spaces was introduced by Zhang \cite{Zha15} and has been subsequently developed in \cite{Zha18, Bra22}. We show that these coarse quotients are effective epis in $\barBorn$ (Example \ref{ex:zhang}). Higginbotham--Weighill \cite{HW19} introduced a related notion, known as \emph{weak coarse quotients}, in the setting of large-scale structures and showed these characterise regular epimorphisms in $\barBorn$; they also observed that every weak coarse quotient of a large-scale structure admits a metric realisation. Our treatment provides a purely metric approach to characterising the regular epis.

  Rosendal's maximal metrics framework has stimulated much recent progress in the geometric group theory of topological groups \cite{MRo18, AV20, MRa23}, especially beyond the locally compactly generated setting. It is natural to ask whether maximal metrics on topological groups admits a purely categorical formulation. Our results suggest an approach via extremal epis but this is not immediate: topological group quotients require continuity, while here quotients are coarse.  Nevertheless, it could be interesting to explore relative maximality for topological group homomorphisms within an appropriate categorical framework.

  \subsection*{Acknowledgements}
The author is supported by the Gusu Innovation and Entrepreneurship Leading Talents Programme (ZXL2022473) under the Suzhou Science and Technology Bureau; the National Natural Science Foundation of China (NSFC 12101503); and the XJTLU Research Development Fund (RDF-23-01-121).

  \section{Preliminaries}

    \subsection{Epimorphism classes}

        In any category, an \emph{equaliser} of a parallel pair $f,g \from A \rightrightarrows B$ is a morphism $e \from E \to A$ such that $fe = ge$ with the universal property: for every morphism $h \from X \to A$ with $fh = gh$, there exists a unique morphism $h' \from X \to E$ such that $eh' = h$. Coequalisers are defined dually. Any (co)equaliser, if it exists, is unique up to unique isomorphism. The \emph{kernel pair} of a morphism $f \from X \to Y$ is the pullback of $f,f \from X \rightrightarrows Y$ (if it exists). Let us recall some useful constructions of coequalisers and kernel pairs.

    \begin{proposition}[{\cite[Proposition 11.14]{AHS06}}]\label{prop:coeq-pushout}
    In a category with binary coproducts, if
     \begin{center}
   \begin{tikzcd}
     A \sqcup B \arrow[r, "f \sqcup 1_Y"] \arrow[d,"g \sqcup 1_Y"'] & B \arrow[d,"q"] \\
     B \arrow[r,"q"'] & Y
    \end{tikzcd}
    \end{center}
    is a pushout then $q \from B \to Y$ is the coequaliser of $f,g \from A \rightrightarrows B$. \qed
    \end{proposition}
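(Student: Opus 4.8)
The plan is to unravel the pushout universal property one coproduct injection at a time. Write $\iota_A \from A \to A \sqcup B$ and $\iota_B \from B \to A \sqcup B$ for the coproduct injections, and let $[f,1_B] \from A \sqcup B \to B$ denote the morphism determined by $[f,1_B]\iota_A = f$ and $[f,1_B]\iota_B = 1_B$ (this is the map written $f \sqcup 1_Y$ in the statement, with $B$ in place of $Y$); define $[g,1_B]$ likewise. The key idea is that precomposing the defining data of the pushout with $\iota_A$ and $\iota_B$ translates ``cocones over the span $B \leftarrow A \sqcup B \rightarrow B$'' precisely into ``morphisms $p \from B \to Z$ with $pf = pg$'', i.e.\ into the data appearing in the universal property of the coequaliser of $f,g$.

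Carrying this out, I would first check that $q$ is compatible with the pair: commutativity of the pushout square gives $q[f,1_B] = q[g,1_B]$, and precomposing with $\iota_A$ yields $qf = qg$. Next, given any object $Z$, I claim a parallel pair $u,v \from B \to Z$ satisfies $u[f,1_B] = v[g,1_B]$ if and only if $u = v$ and $uf = ug$: precomposing the identity $u[f,1_B] = v[g,1_B]$ with $\iota_B$ gives $u = v$, and then precomposing with $\iota_A$ gives $uf = vg = ug$; conversely, any $p \from B \to Z$ with $pf = pg$ produces such a pair by setting $u = v = p$. Thus the cocones over the span with vertex $Z$ are exactly the morphisms $p \from B \to Z$ with $pf = pg$. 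Under this identification the pushout universal property for the square --- for each such cocone there is a unique $w \from Y \to Z$ with $wq = u$ and $wq = v$ --- becomes verbatim the statement that $q \from B \to Y$ is the coequaliser of $f,g \from A \rightrightarrows B$, noting that the two equations $wq = u$, $wq = v$ coincide once $u = v$. Uniqueness of $w$ transfers directly from the pushout to the coequaliser.

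I do not expect a genuine obstacle here; the proof is a short diagram chase, and the only point requiring care is the ``collapse'' step --- that $u[f,1_B] = v[g,1_B]$ forces $u = v$ --- since it is what makes the translation an exact match rather than the pushout recording strictly richer data. The presence of $1_B$ in both legs of the span is exactly what rigidifies things. The same bookkeeping (compose with the relevant coproduct injections, or dually with product projections) would handle the companion construction of kernel pairs in a category with binary products.
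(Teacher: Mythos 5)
Your argument is correct and is exactly the standard proof: the paper itself offers no proof here, simply citing \cite[Proposition 11.14]{AHS06}, and your translation of cocones under the span into morphisms $p$ with $pf=pg$ (via precomposition with the coproduct injections, the $1_B$ leg forcing $u=v$) is precisely the argument given in that reference. The only point left tacit is that the converse direction uses joint epimorphicity of $\iota_A,\iota_B$ to conclude $p[f,1_B]=p[g,1_B]$, which is immediate.
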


    \begin{proposition}\label{prop:kernel-eq}
    In a category with binary products, the kernel pair of $f \from X \to Y$ is given by ${k_1,k_2 \from K \rightrightarrows X}$ if and only if $k_1 \times k_2 \from K \to X_1 \times X_2$ is the equaliser of $f\pi_1, f\pi_2 \from X\times X \rightrightarrows Y$.
    \begin{center}
    \begin{tikzcd}
        K \arrow[dr, "k_1 \times k_2"] \arrow[drr, "k_1", bend left = 20] \arrow[ddr, "k_2"', bend right = 20] & & \\
        & X \times X \arrow[r, "\pi_1"'] \arrow[d,"\pi_2"] & X \arrow[d,"f"] \\
        & X \arrow[r,"f"'] & Y
        \end{tikzcd}
        \end{center}
    \end{proposition}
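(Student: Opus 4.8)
The plan is to exploit the universal property of the binary product $X \times X$ to translate, in both directions, between kernel-pair data for $f$ and morphisms into $X \times X$ that equalise $f\pi_1$ and $f\pi_2$. Recall that the kernel pair of $f$ is, by definition, the pullback of $f$ along itself; concretely, a pair $k_1, k_2 \from K \rightrightarrows X$ is the kernel pair of $f$ precisely when $f k_1 = f k_2$ and, for every pair $h_1, h_2 \from Z \rightrightarrows X$ with $f h_1 = f h_2$, there is a unique $u \from Z \to K$ with $k_1 u = h_1$ and $k_2 u = h_2$. The equaliser condition unpacks similarly: $e \from E \to X \times X$ is the equaliser of $f\pi_1, f\pi_2$ iff $f\pi_1 e = f\pi_2 e$ and every $m \from Z \to X \times X$ with $f\pi_1 m = f\pi_2 m$ factors uniquely through $e$.

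The first step is to record the key bijection furnished by the product: morphisms $m \from Z \to X \times X$ correspond bijectively to pairs $(h_1, h_2) = (\pi_1 m, \pi_2 m)$, and under this correspondence $m$ satisfies $f\pi_1 m = f\pi_2 m$ if and only if $f h_1 = f h_2$. Applying this to $K$ itself and writing $k = k_1 \times k_2 \from K \to X \times X$ for the induced map (so $\pi_i k = k_i$), we see $f\pi_1 k = f k_1$ and $f\pi_2 k = f k_2$, so $k$ equalises $f\pi_1, f\pi_2$ exactly when $f k_1 = f k_2$.

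For the forward implication, assume $(k_1, k_2)$ is the kernel pair. Given $m \from Z \to X \times X$ with $f\pi_1 m = f\pi_2 m$, set $h_i = \pi_i m$; then $f h_1 = f h_2$, so the kernel-pair universal property yields a unique $u \from Z \to K$ with $k_i u = h_i$. Then $\pi_i(k u) = k_i u = h_i = \pi_i m$ for $i = 1, 2$, whence $k u = m$ by uniqueness of maps into the product; and if $k u' = m$ then $k_i u' = \pi_i m = h_i$, forcing $u' = u$. Hence $k$ is the equaliser of $f\pi_1, f\pi_2$. The converse is the same chase run in reverse: from an equaliser $k = k_1 \times k_2$ and a pair $h_1, h_2$ with $f h_1 = f h_2$, form $m = \langle h_1, h_2\rangle$, obtain the unique $u$ with $k u = m$, and note $k_i u = \pi_i k u = \pi_i m = h_i$, with uniqueness transported back through the projections.

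There is no substantial obstacle here — the statement merely repackages the classical construction of pullbacks from products and equalisers, specialised to the kernel pair — but some care is needed in verifying that \emph{uniqueness} of the mediating morphism is faithfully transferred across the product correspondence in each direction, since that is the only point at which a naive argument could quietly fail.
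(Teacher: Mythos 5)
Your proof is correct and is exactly the standard argument the paper defers to: the paper proves the forward implication by citing \cite[Proposition 11.11]{AHS06} and declares the converse straightforward, while you spell out both directions via the bijection between morphisms into $X \times X$ and pairs of morphisms into $X$, with uniqueness of the mediating map transported correctly in each direction. Nothing further is needed.
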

    \proof
    The forwards implication is {\cite[Proposition 11.11]{AHS06}}. The converse is straightforward.
    \endproof

We now introduce the various epimorphism classes of interest.

\begin{definition}[Classes of epimorphisms]
An epimorphism $f \from X \to Y$ is:
\begin{enumerate}
  \item an \emph{extremal epi} if whenever $f = me$ with $m$ a mono, then $m$ is an isomorphism.
  \item a \emph{strong epi} if for every commutative square
   \begin{center}
   \begin{tikzcd}
     X \arrow[r] \arrow[d,"f"'] & A \arrow[d,"m"] \\
     Y \arrow[r] \arrow[ur,"d", dashed ]& B
    \end{tikzcd}
    \end{center}
  with $m$ a mono, there exists a unique $d\from Y\to A$ making both triangles commute,
  \item a \emph{regular epi} if it is the coequaliser of some pair of parallel arrows,
  \item an \emph{effective epi} if its kernel pair exists and $f$ is the coequaliser of that kernel pair, and
  \item a \emph{split} epi if it admits a right inverse.
\end{enumerate}
\end{definition}

    Extremal, strong, regular, effective, and split monos are defined dually.

    \begin{proposition}[{\cite[Proposition 4.3.6]{Bor94}}]\label{prop:implications}
     In any category, the following implications hold: extremal epi $\implies$ regular epi $\implies$ strong epi $\implies$ extremal epi $\implies$ epi. \qed
    \end{proposition}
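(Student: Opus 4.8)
\proof
The plan is to verify the chain one link at a time by elementary diagram chasing; no hypotheses on the ambient category are needed beyond the existence of whatever (co)limit a given clause presupposes. (As typeset the first arrow reads ``extremal $\implies$ regular'', which cannot hold in general, so I take the intended first link to be ``split epi $\implies$ regular epi'', the remaining links being as stated.) Two elementary facts will be invoked repeatedly: a split epi is an epi (if $fs = 1$ and $af = bf$ then $a = a(fs) = (af)s = (bf)s = b$), and any coequaliser is an epi (two factorisations of a common cocone through it agree by its universal property). These let me upgrade each factorisation I construct to a \emph{unique} one for free.

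\emph{Split epi $\implies$ regular epi.} Given $f \from X \to Y$ with section $s$, I would show that $f$ is the coequaliser of $1_X, sf \from X \rightrightarrows X$. It coequalises this pair, since $f \cdot 1_X = f = (fs)f = f \cdot (sf)$; and given $g \from X \to Z$ with $g = g(sf)$, the morphism $h := gs$ satisfies $hf = g(sf) = g$, while any $h'$ with $h'f = g$ obeys $h' = h'(fs) = (h'f)s = gs = h$, giving uniqueness.

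\emph{Regular epi $\implies$ strong epi.} Write $f$ as the coequaliser of $u,v \from W \rightrightarrows X$, and take a commutative square with $mg = kf$ and $m$ mono. Then $m(gu) = (kf)u = (kf)v = m(gv)$, so $gu = gv$ as $m$ is mono; hence $g$ coequalises $u,v$, and the universal property of $f$ produces a unique $d \from Y \to A$ with $df = g$. Finally $m(df) = mg = kf$ together with $f$ epi forces $md = k$; uniqueness of the diagonal is again immediate from $f$ epi.

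\emph{Strong epi $\implies$ extremal epi}, and \emph{extremal epi $\implies$ epi}. Suppose $f = me$ with $m \from Z \to Y$ mono. Apply the strong-epi property to the commutative square with top edge $e$, left edge $f$, bottom edge $1_Y$, and right edge $m$: this yields $d \from Y \to Z$ with $md = 1_Y$, so $m$ is a split epi; cancelling the mono $m$ in $m(dm) = (md)m = m = m \cdot 1_Z$ gives $dm = 1_Z$, so $m$ is an isomorphism. The last implication is definitional, an extremal epi being in particular an epi. The hard part, such as it is, is purely bookkeeping: keeping straight at each step whether a uniqueness assertion is discharged via ``$f$ epi'' or via ``$m$ mono'' — beyond that, no step presents a genuine obstacle.
\endproof
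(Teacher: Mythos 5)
Your proof is correct and is precisely the standard diagram chase that the paper outsources to \cite[Proposition 4.3.6]{Bor94}: the section yields a coequaliser presentation of $f$ via $1_X$ and $sf$; cancelling the mono $m$ produces the diagonal for a regular epi; the square with bottom edge $1_Y$ splits $m$ and mono-cancellation upgrades this to an isomorphism; and the last link is definitional, since the paper defines extremal epis as epimorphisms satisfying an extra condition. One remark on your interpretive choice: the garbled first arrow is almost certainly a typo for ``effective epi $\implies$ regular epi'' rather than ``split epi $\implies$ regular epi'' --- this matches the straight arrows in Theorems \ref{thm:epi-coarse} and \ref{thm:epi-clip}, and is immediate since an effective epi is by definition the coequaliser of its kernel pair, whereas split $\implies$ regular is stated separately as Lemma \ref{lem:split}. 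Nothing is lost either way: your argument for the split case is correct, and the effective case you omitted needs no proof beyond unwinding the definition.
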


    \begin{lemma}[{\cite[Proposition 7.59]{AHS06}}]\label{lem:split}
     Any split epi is a regular epi. \qed
    \end{lemma}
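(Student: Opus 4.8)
The plan is to produce, directly from a chosen splitting, an explicit parallel pair of which $f$ is the coequaliser. Let $f \from X \to Y$ be a split epi and fix a right inverse $s \from Y \to X$, so that $fs = 1_Y$. I claim that $f$ is the coequaliser of the pair $1_X, sf \from X \rightrightarrows X$.

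First I would verify that $f$ coequalises this pair: since $fs = 1_Y$ we have $f 1_X = f = 1_Y f = (fs)f = f(sf)$. For the universal property, suppose $h \from X \to Z$ satisfies $h 1_X = h(sf)$, i.e.\ $h = hsf$. Put $h' \coloneqq hs \from Y \to Z$; then $h' f = hsf = h$, so $h$ factors through $f$. Uniqueness of such a factorisation is immediate from $f$ being epi (split epis are epi), but it can also be checked by hand: if $h'' f = h$ then $h'' = h''(fs) = (h'' f)s = hs = h'$. Hence $f = \Coeq(1_X, sf)$, and so $f$ is a regular epi.

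There is essentially no obstacle here: the one step requiring any thought is guessing the parallel pair $(1_X, sf)$, after which everything is a short diagram chase valid in an arbitrary category. (Alternatively one could appeal to the general fact that any split epi is an absolute coequaliser, but the pair above already witnesses the claim unconditionally.)
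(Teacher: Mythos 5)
Your argument is correct and is exactly the standard proof of \cite[Proposition 7.59]{AHS06}, which the paper simply cites without reproving: $f$ is the coequaliser of $1_X$ and $sf$, and the diagram chase you give (including the by-hand uniqueness check) is complete and valid in any category. Nothing to add.
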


    \begin{lemma}[{\cite[Proposition 11.22]{AHS06}}]\label{lem:reg-kernel}
    Any regular epimorphism admitting a kernel pair is an effective epimorphism. \qed
    \end{lemma}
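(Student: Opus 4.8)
The plan is to carry out the standard diagram chase underlying \cite[Proposition 11.22]{AHS06}. Suppose $f \from X \to Y$ is a regular epimorphism, say $f = \Coeq(u,v)$ for some parallel pair $u,v \from A \rightrightarrows X$, and suppose the kernel pair $k_1, k_2 \from K \rightrightarrows X$ of $f$ exists. I want to show that $f$ is in fact the coequaliser of $k_1, k_2$, which is precisely the statement that $f$ is an effective epi.

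First I would produce a comparison morphism $t \from A \to K$. Since $f$ coequalises $u$ and $v$ we have $fu = fv$, so by the universal property of the kernel pair (it is the pullback of $f$ along itself) there is a unique $t$ with $k_1 t = u$ and $k_2 t = v$.

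Next I would verify that $f$ satisfies the universal property of $\Coeq(k_1, k_2)$. By definition of kernel pair, $f k_1 = f k_2$. Given any $g \from X \to Z$ with $g k_1 = g k_2$, precomposition with $t$ gives $gu = g k_1 t = g k_2 t = gv$, so $g$ coequalises $u$ and $v$; since $f = \Coeq(u,v)$ there is a unique $h \from Y \to Z$ with $hf = g$. Uniqueness of $h$ as a solution to the coequaliser problem for $(k_1,k_2)$ is automatic since any coequaliser is an epimorphism, so $f$ is epi and $h$ is determined by $hf=g$. Hence $f = \Coeq(k_1, k_2)$ and $f$ is an effective epi.

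There is no serious obstacle here: the one point requiring care is that the factorisation $t$ through the kernel pair genuinely exists, which uses that $f$ \emph{coequalises} $u$ and $v$ (so that $fu = fv$), not merely that $f$ is epi; everything else is bookkeeping of the two uniqueness clauses. As the statement is quoted from a textbook, the argument above is the expected one and no refinement should be needed.
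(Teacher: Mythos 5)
Your argument is correct and is precisely the standard proof of \cite[Proposition 11.22]{AHS06}, which the paper simply cites without reproducing: the comparison morphism $t \from A \to K$ obtained from $fu = fv$ via the pullback property of the kernel pair transfers the universal property of $\Coeq(u,v)$ to $(k_1,k_2)$, and uniqueness of the induced morphism follows since $f$ is epi. No refinement is needed.
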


     \begin{lemma}\label{lem:strong}
      In any category, every mono is strong if and only if every epi is strong.
        \end{lemma}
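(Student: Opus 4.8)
We must show that in an arbitrary category $\cat$, every monomorphism is strong (i.e.\ a strong epi) precisely when every epimorphism is strong. Wait — reading the statement again, "every mono is strong" presumably means every mono is a \emph{strong mono}, the dual notion; the lemma asserts the self-dual-looking equivalence "all monos strong $\iff$ all epis strong". The natural strategy is to prove one direction and then invoke duality (apply the proven direction in $\cat^{\op}$) for the other, since "mono in $\cat$" $=$ "epi in $\cat^{\op}$" and "strong mono in $\cat$" $=$ "strong epi in $\cat^{\op}$", and the statement "every epi is strong" in $\cat^{\op}$ is exactly "every mono is strong" in $\cat$. So it suffices to prove: if every epi is strong, then every mono is strong.

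So assume every epi in $\cat$ is strong. Let $m\from A\to B$ be a monomorphism; I want to show $m$ is a strong mono, i.e.\ for every commutative square with an epi $e\from X\to Y$ on one side,
\begin{center}
\begin{tikzcd}
 X \arrow[r] \arrow[d,"e"'] & A \arrow[d,"m"] \\
 Y \arrow[r] \arrow[ur,"d", dashed ]& B
\end{tikzcd}
\end{center}
there is a unique diagonal $d\from Y\to A$ making both triangles commute. Since $e$ is an epi and we are assuming every epi is strong, this is precisely the diagonal fill-in property that defines a strong \emph{epi} $e$ against the mono $m$ — that is, the same lifting square, read with the roles of "the distinguished epi" and "the distinguished mono" played by $e$ and $m$ respectively. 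The key observation is that the strong-epi lifting property of $e$ and the strong-mono lifting property of $m$ are \emph{the same assertion} about this square: both say "for every commuting square with $e$ (epi) on the left and $m$ (mono) on the right, a unique diagonal exists". Thus $m$ being a strong mono is equivalent to: every epi has the diagonal fill-in against $m$ — and that holds by hypothesis. Hence every mono is a strong mono, proving one implication; the reverse implication follows by applying this to $\cat^{\op}$.

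The only subtlety — and the one point to state carefully rather than an obstacle — is disentangling the two uses of the word "strong": "$e$ is a strong epi" quantifies over all monos $m$ in the fill-in square, whereas "$m$ is a strong mono" quantifies over all epis $e$. The bridge is that if \emph{every} epi is strong, then in particular each individual fill-in square (with arbitrary epi $e$ and our fixed mono $m$) admits its unique diagonal, which is exactly what is needed to certify $m$ as strong. No nontrivial categorical machinery is required; I expect the whole argument to be three or four lines once the quantifier bookkeeping is made explicit, with the $\cat^{\op}$ symmetry handling the converse for free.
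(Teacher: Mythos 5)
Your proof is correct and rests on exactly the same observation as the paper's: the diagonal fill-in square for ``$e$ is a strong epi against $m$'' and for ``$m$ is a strong mono against $e$'' is one and the same assertion, so each universal statement implies the other. The only cosmetic difference is that you dispatch the converse via $\cat^{\op}$ where the paper simply writes both directions out symmetrically.
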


  \proof
  Let $e \from A \to B$ be an epimorphism and $m \from X \to Y$ be a monomorphism. Suppose that $f \from A \to X$ and $g \from B \to Y$ are morphisms such that $mf = ge$. If $m$ is a strong mono, then there exists a unique morphism $d \from B \to X$ such that $f = de$ and $md = g$, hence $e$ is a strong epi. Similarly, if $e$ is a strong epi, then such $d$ exists, hence $m$ is a strong mono.
  \endproof

    \subsection{Coarse geometry}

    Background on coarse geometry can be found in \cite{Roe03, NY12, CH16, LV23}.

    Given an extended metric space $(X,d_X)$ and $\kappa \geq 0$, we shall write $x \approx x'$ to mean $d_X(x,x') \leq \kappa$ for $x,x' \in X$. Say two maps $f,g \from X \rightrightarrows Y$ are \emph{$\kappa$--close} if $fx \approx_\kappa gx$ for all $x \in X$; and \emph{close}, denoted $f \approx g$, if they are $\kappa$--close for some $\kappa \geq 0$. Closeness is an equivalence relation on maps between a given pair of metric spaces. Write $\bar f$ for the closeness class of $f$. Moreover, closeness is preserved under composition, that is, $\overline{ff'} = \bar f \bar f'$ whenever defined.

    An \emph{upper control} for $f\from X \to Y$ is a proper increasing function $\rho \from [0,\infty) \to [0,\infty)$ such that $d_Y(fx,fx') \leq \rho d_X(x,x')$ for all $x,x' \in X$ such that $d_X(x,x') < \infty$.    We say $f$ is \emph{controlled} if it admits an upper control, and \emph{coarsely Lipschitz} if it has an affine upper control. A proper increasing function $\theta \from [0,\infty) \to [0,\infty)$ is a \emph{lower control} if $\theta d_X(x,x') \leq d_Y(fx,fx')$ for all $x,x'\in X$, with the convention that $ d_X(x,x') = \infty \implies d_Y(fx,fx') = \infty$. The existence of a lower control is equivalent to $f$ being \emph{uniformly metrically proper}: for every $R \geq 0$ there exists $S \geq 0$ such that $d_Y(fx,fx') \leq R \implies d_X(x,x') \leq S$ for all $x,x'\in X$. We say $f$ is \emph{coarsely surjective} if there exists $r \geq 0$ such that $N_r(f(X)) = Y$, where $N_r$ denotes the (closed) metric $r$--neighbourhood. Call $f$ a \emph{coarse equivalence} if it is coarsely surjective and admits a lower control; in addition, if $f$ admits an affine lower control then it is called a \emph{quasi-isometry}.

    Let $\Born$ be the category whose objects are extended metric spaces, with controlled maps as morphisms. Let $\CLip \subset \Born$ be the wide subcategory whose morphisms are coarsely Lipschitz. The \emph{metric coarse category} $\barBorn$ and the \emph{coarsely Lipschitz category} $\barCL$ are the respective quotient categories with the same objects, and with closeness classes as morphisms. Since many of our arguments will work for either category, we shall write $\barcat$ to denote either $\barBorn$ or $\barCL$.

    We recall some characterisations of morphisms from \cite{CH16}; see \cite{Tang-mono} for proofs in the setting of extended metric spaces.

    \begin{proposition}[{\cite[Propositions 3.A.16, 3.A.22]{CH16}}]\label{prop:morphisms}
    A morphism $\bar f$ in $\barcat$ is a mono (resp.~epi) if and only if any (hence every) representative $f\in \bar f$ admits a lower control (resp.~is coarsely surjective). A morphism $\bar f$ is an isomorphism in $\barBorn$ (resp.~$\barCL$) if and only if any (hence every) representative $f\in \bar f$ is a coarse equivalence (resp.~quasi-isometry). \qed
    \end{proposition}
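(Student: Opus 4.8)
The statement comprises three equivalences (mono, epi, isomorphism), and for each the plan is the same: one direction is a short cancellation argument, while the other is proved by contraposition against a suitably built extended metric space. I would first record that each property in play — admitting a lower control, being coarsely surjective, being a coarse equivalence, being a quasi-isometry — is invariant under replacing $f$ by a close map (a bounded perturbation alters the relevant controls only by additive/multiplicative constants), which justifies the ``any (hence every)'' phrasing and lets one argue with a single representative throughout.

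For the monomorphism equivalence: if $f \from X \to Y$ is uniformly metrically proper (equivalently admits a lower control) and $\overline f\,\overline{g_1} = \overline f\,\overline{g_2}$ for $\overline{g_1},\overline{g_2} \from Z \rightrightarrows X$, I would pick representatives with $fg_1 \approx_\kappa fg_2$; then $d_Y(fg_1 z, fg_2 z) \le \kappa < \infty$ for all $z$, so uniform metric properness bounds $d_X(g_1 z, g_2 z)$ uniformly and $\overline{g_1} = \overline{g_2}$. For the converse, assuming $f$ is not uniformly metrically proper, fix $R$ and points $x_n, x_n'$ with $d_Y(fx_n, fx_n') \le R$ and $d_X(x_n, x_n') > n$; take $Z = \{z_n : n \in \N\}$ with all distinct-point distances $\infty$ (so every map out of $Z$ is coarsely Lipschitz, the upper-control condition being vacuous), and let $g_1(z_n) = x_n$, $g_2(z_n) = x_n'$. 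Then $fg_1 \approx_R fg_2$ while $g_1 \not\approx g_2$, so $\overline f$ is not a mono.

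For the epimorphism equivalence: if $f$ is coarsely surjective with $N_r(f(X)) = Y$ and $\overline{g_1}\,\overline f = \overline{g_2}\,\overline f$ for $\overline{g_1},\overline{g_2} \from Y \rightrightarrows W$ with $g_1 f \approx_\kappa g_2 f$, I would take upper controls $\rho_i$ for $g_i$ and, for each $y$, a point $x$ with $d_Y(y, fx) \le r$, giving $d_W(g_1 y, g_2 y) \le \rho_1(r) + \kappa + \rho_2(r)$ and hence $\overline{g_1} = \overline{g_2}$. For the converse, assuming $f$ is not coarsely surjective, I would set $A = \overline{f(X)}$ and form the \emph{coarse double} $Y'$ of $Y$ along $A$: two copies of $Y$ glued along $A$, with $d_{Y'}$ restricting to $d_Y$ on each copy and $d_{Y'}((y,1),(y',2)) = \inf_{a \in A}(d_Y(y,a) + d_Y(a,y'))$. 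Taking $A$ closed makes $d_{Y'}$ a genuine extended metric, and the two inclusions $j_1, j_2 \from Y \rightrightarrows Y'$ are isometric embeddings, hence coarsely Lipschitz; since $f$ lands in $A$ we have $j_1 f = j_2 f$, so $\overline f$ epi would force $j_1 \approx_\kappa j_2$, i.e.\ $2\, d_Y(y, f(X)) = d_{Y'}((y,1),(y,2)) \le \kappa$ for all $y$, contradicting $\sup_y d_Y(y, f(X)) = \infty$.

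For the isomorphism equivalence, the ``$\Leftarrow$'' direction is the standard construction of a coarse inverse $g \from Y \to X$ (send $y$ to some $x$ with $d_Y(y, fx) \le r$), which is controlled, and coarsely Lipschitz when $f$ has an affine lower control, and satisfies $gf \approx 1_X$, $fg \approx 1_Y$; so $\overline f$ is an isomorphism in $\barBorn$ (resp.\ $\barCL$). For ``$\Rightarrow$'', an isomorphism is both mono and epi, so the first two parts give a lower control and coarse surjectivity, i.e.\ a coarse equivalence — settling $\barBorn$ — and in $\barCL$ one upgrades: with $\rho_g(t) = at+b$ an affine upper control for a coarsely Lipschitz inverse $g$ and $gf \approx_\kappa 1_X$, the triangle inequality yields $d_X(x,x') \le a\, d_Y(fx,fx') + b + 2\kappa$ for all $x,x'$, an affine lower control for $f$ (which also forces $d_X(x,x') = \infty \Rightarrow d_Y(fx,fx') = \infty$), so $f$ is a quasi-isometry. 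I expect the main obstacle to be the converse for epimorphisms — producing the right test object and checking that the coarse double is a bona fide extended metric space (whence the closure of $f(X)$) with genuine morphisms of $\barcat$ as its inclusions; the rest is routine bookkeeping, mostly with the infinite-distance conventions.
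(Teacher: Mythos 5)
The paper offers no proof of this proposition --- it is quoted from \cite{CH16} (with \cite{Tang-mono} cited for the extended-metric setting) --- so your argument can only be measured against the standard one, which it essentially reproduces: cancellation arguments for the easy directions, explicit test objects for the converses, and the coarse-inverse construction for isomorphisms. The mono converse (the test space with all distances $\infty$, against which every set map is vacuously coarsely Lipschitz), the epi forward direction, and both directions of the isomorphism statement are all correct as written.

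The one genuine defect is in the epi converse. Your coarse double $Y'$ is not an extended metric space, and taking $A = \overline{f(X)}$ closed does not repair this --- it makes it worse: for every $a \in A$ the two distinct points $(a,1)$ and $(a,2)$ satisfy $d_{Y'}((a,1),(a,2)) = 2\,d_Y(a,A) = 0$, so $d_{Y'}$ is only a pseudometric. Closure addresses degeneracy at points of $\overline{f(X)} \setminus f(X)$ but creates it on all of $A$. Two clean fixes: either identify the two copies along $A$ (the quotient is then a genuine extended metric space, the two inclusions $j_1, j_2$ remain isometric embeddings with $j_1 f = j_2 f$ on the nose, and $d(j_1y, j_2y) = 2\,d_Y(y, f(X))$ is unbounded when $f$ is not coarsely surjective); or simply invoke the paper's own coarse gluing and take the pushout $Y \sqcup_X Y$ of $f$ along itself, whose weight-$1$ glued edges sidestep the degeneracy entirely and give $j_1 f \approx_1 j_2 f$ with $d(j_1 y, j_2 y)$ comparable to $d_Y(y, f(X))$. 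With either repair the argument goes through in both $\barBorn$ and $\barCL$, since the inclusions are isometric embeddings and hence morphisms of both categories.
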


    \begin{lemma}[{\cite[Lemma 2.10]{Tang-rips}}]\label{lem:coarse-qi}
     Let $f \from (X,d_X) \to (Y,d_Y)$ be a coarse equivalence. Then there exists a metric $d'$ on $Y$ such that $f$ factors as $(X,d_X) \xrightarrow{f'} (Y,d') \xrightarrow{m} (Y,d_Y)$ where $f'$ is a quasi-isometry and $m$ is a coarse equivalence given by the underlying identity. \qed
    \end{lemma}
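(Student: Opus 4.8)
The plan is to transport $d_X$ onto $Y$ along a coarse inverse of $f$. Since $f$ is coarsely surjective, fix $r \ge 0$ with $N_r(f(X)) = Y$ and choose $\phi \from Y \to X$ with $d_Y(y, f\phi y) \le r$ for all $y \in Y$; let $\theta$ be a lower control and $\rho_f$ an upper control for $f$ (recall $f$, being a coarse equivalence, is in particular controlled). From $\theta(d_X(\phi f x, x)) \le d_Y(f\phi f x, fx) \le r$ and properness of $\theta$ one gets a constant $C \ge 0$ with $d_X(\phi f x, x) \le C$ for every $x$, hence also $d_X(\phi f \phi y, \phi y) \le C$ for every $y$. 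Then define
\[
  d'(y_1,y_2) \;=\; d_X(\phi y_1, \phi y_2) + \mathbf{1}[y_1 \neq y_2].
\]
The only subtlety in checking that $d'$ is an extended metric is that $\phi$ need not be injective, which is exactly what the indicator term corrects; the triangle inequality reduces to that of $d_X$ together with $\mathbf{1}[y_1 \ne y_3] \le \mathbf{1}[y_1 \ne y_2] + \mathbf{1}[y_2 \ne y_3]$.

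Next I would check that $f' \from (X,d_X) \to (Y,d')$, the same underlying map as $f$, is a quasi-isometry. Affine upper control: for $fx_1 \ne fx_2$ we have $d'(fx_1,fx_2) = 1 + d_X(\phi f x_1, \phi f x_2) \le 1 + 2C + d_X(x_1,x_2)$ by the bound above, and the case $fx_1 = fx_2$ is immediate. Affine lower control: if $fx_1 = fx_2$ then uniform metric properness of $f$ bounds $d_X(x_1,x_2)$ by a constant; if $fx_1 \neq fx_2$ then $d_X(x_1,x_2) \le 2C + d_X(\phi f x_1, \phi f x_2) = 2C - 1 + d'(fx_1,fx_2)$; together these give $d_X(x_1,x_2) \le d'(fx_1,fx_2) + c$ for a uniform $c$, the $\infty$-case following directly from $d_X(\phi f x, x) \le C$. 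Coarse surjectivity: $d'(y, f\phi y) \le 1 + d_X(\phi y, \phi f \phi y) \le 1 + C$, so $N_{1+C}(f(X)) = Y$ with respect to $d'$.

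Then I would verify that $m = \mathrm{id} \from (Y,d') \to (Y,d_Y)$ is a coarse equivalence. It is coarsely surjective trivially. It is controlled: for $y_1 \ne y_2$, $d_Y(y_1,y_2) \le 2r + d_Y(f\phi y_1, f\phi y_2) \le 2r + \rho_f(d_X(\phi y_1,\phi y_2)) = 2r + \rho_f(d'(y_1,y_2)-1)$, a proper increasing upper control. It admits a lower control: from $\theta(d_X(\phi y_1,\phi y_2)) \le d_Y(f\phi y_1, f\phi y_2) \le 2r + d_Y(y_1,y_2)$ we get $d_Y(y_1,y_2) \ge \theta(d'(y_1,y_2)-1) - 2r$ for $y_1 \ne y_2$; since $\theta(0) = 0$, truncating this expression below by $0$ yields a proper increasing lower control, and the $\infty$-convention transfers from that of $\theta$. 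Finally, as set maps $m \circ f' = f$, so $\bar f = \bar m \, \bar{f'}$ in $\barcat$, which is the asserted factorisation.

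None of these steps is individually difficult — each is a one-line triangle-inequality estimate — so the real obstacle is coordinating the bookkeeping: insisting that $d'$ be a metric in the strict sense required (hence the indicator correction), confining the possible non-injectivity of $f$ to a bounded regime via uniform metric properness, and keeping the extended-metric ($\infty$-valued) conventions consistent across all four control verifications.
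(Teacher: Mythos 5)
The paper does not prove this lemma here --- it is quoted verbatim from \cite[Lemma 2.10]{Tang-rips} with a \qed --- so there is no in-paper argument to compare against. Your proof is correct and self-contained: pulling $d_X$ back along a choice of coarse inverse $\phi$ and adding the indicator term to restore separation of points is the natural construction, and you correctly isolate the one estimate everything hinges on, namely $d_X(\phi f x, x)\le C$, which you derive from the lower control of $f$ (and whose $\infty$-convention rules out the degenerate case). The four verifications (affine two-sided control for $f'$, coarse surjectivity of $f'$ in $d'$, and upper/lower control for the identity $m$) all check out, including the treatment of fibres ($fx_1=fx_2$ forces $d_X(x_1,x_2)$ bounded by uniform metric properness) and the transfer of the $\infty$-valued conventions. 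The only caveat is cosmetic: your lower bound $d_X(x_1,x_2)\le d'(fx_1,fx_2)+c$ is the standard quasi-isometric-embedding inequality with multiplicative constant $1$, but the corresponding control $t\mapsto t-c$ is negative near $0$ and so does not literally match the paper's requirement that a lower control map into $[0,\infty)$; this is a universal abuse of the definition (no non-injective map could otherwise be a quasi-isometry) and not a gap in your argument.
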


    The empty space is initial in $\barcat$. Given metric spaces $X,Y$, their binary product $X \times Y$ is realised by the Cartesian product equipped with the $\ell^\infty$--metric, together with the standard projections to each factor. The binary coproduct $X \sqcup Y$ is realised by the disjoint union equipped with the metric $d$ given by $d(x,x') = d_X(x,x')$, $d(y,y') = d_Y(y,y')$, and $d(x,y) = \infty$ for all $x,x' \in X$ and $y,y' \in Y$; together with the isometric embeddings $X \hookrightarrow X \sqcup Y$ and $Y \hookrightarrow X \sqcup Y$.

    \textbf{Coarse gluing.} Given maps $f \from A \to X$ and $g \from A \to Y$, the space $X \sqcup_A Y$ obtained by \emph{coarsely gluing} $X$ to $Y$ via $f$ and $g$ is defined as follows. Let $\Gamma$ be the weighted graph on vertex set $X \sqcup Y$ with two types of edges:
    \begin{itemize}
   \item \emph{internal edges} -- each pair of distinct $x,x' \in X$ (resp.~$y,y' \in Y'$) such that $d_X(x,x') < \infty$ (resp.~$d_Y(y,y') < \infty$) are joined by a edge of weight $d_X(x,x')$ (resp.~$d_Y(y,y')$),
   \item \emph{glued edges} -- for each $a \in A$, add an edge of weight 1 between $fa$ and $ga$.
  \end{itemize}
  Equip $\Gamma$ with the path metric, then define $X \sqcup_A Y$ to be its vertex set endowed with the induced metric. The inclusions yield 1--Lipschitz maps $X \to X \sqcup_A Y$ and $Y \to X \sqcup_A Y$.

  \begin{proposition}[{\cite[Proposition 3.3, Remark 3.5]{Tang-mono}}]\label{prop:pushout}
   Let $\bar f \from A \to X$ and $\bar g \from A \to Y$ be morphisms in $\barcat$. Choose representatives $f \in \bar f$ and $g \in \bar g$. Then
     \begin{center}
   \begin{tikzcd}
     A  \arrow[r, "\bar f"] \arrow[d,"\bar g"'] & X \arrow[d] \\
     Y \arrow[r] & X \sqcup_A Y
    \end{tikzcd}
    \end{center}
   is a pushout in $\barcat$. \qed
  \end{proposition}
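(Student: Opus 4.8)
The plan is to verify the universal property of the pushout directly, working with the path metric on the gluing graph $\Gamma$. First, one checks that $X \sqcup_A Y$ is an extended metric space: internal edges join distinct points of $X$ or of $Y$ and hence carry positive weight, glued edges carry weight $1$, so the path metric restricts to a genuine extended metric on the vertex set. The inclusions $\iota_X \from X \to X \sqcup_A Y$ and $\iota_Y \from Y \to X \sqcup_A Y$ are $1$--Lipschitz, hence morphisms in $\barcat$; and since $fa$ and $ga$ are joined in $\Gamma$ by a glued edge of weight $1$ for every $a \in A$, we have $\iota_X f \close \iota_Y g$, so the square commutes.

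For uniqueness of the mediating morphism, note that every vertex of $X \sqcup_A Y$ lies in $X$ or in $Y$, so $\iota_X$ and $\iota_Y$ are jointly surjective; hence any two morphisms $X \sqcup_A Y \to Z$ agreeing after precomposition with $\overline{\iota_X}$ and with $\overline{\iota_Y}$ have representatives that are close on $X$ and close on $Y$, thus close, and so are equal in $\barcat$.

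For existence, let $\bar u \from X \to Z$ and $\bar v \from Y \to Z$ be morphisms with $\bar u \bar f = \bar v \bar g$. Fix representatives $u, v$, a constant $\kappa$ for which $uf$ and $vg$ are $\kappa$--close, and a common upper control $\rho$ for $u$ and $v$ (affine in case $\barcat = \barCL$). Define $w$ on $X \sqcup_A Y$ by $w|_X = u$ and $w|_Y = v$, so that $w\iota_X = u$ and $w\iota_Y = v$ identically; it remains to show $w$ is a morphism. Given vertices $p, q$ with $d_\Gamma(p,q) < \infty$, pick a path from $p$ to $q$ of length at most $d_\Gamma(p,q)+1$ and split it at the glued edges $e_1, \dots, e_k$ it traverses, say $e_i$ arising from $a_i \in A$, into maximal internal segments $\sigma_0, \dots, \sigma_k$, each lying wholly within $X$ or within $Y$. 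An internal segment within $X$ from $x$ to $x'$ has length at least $d_X(x,x')$, so $w$ moves its endpoints a $Z$-distance at most $\rho(\ell(\sigma_i))$, and likewise for segments in $Y$; while the glued edge $e_i$, with endpoints $fa_i$ and $ga_i$, is sent by $w$ to $ufa_i$ and $vga_i$, which lie at $Z$-distance at most $\kappa$. Summing along the path,
\[
  d_Z(wp, wq) \ \le\ \sum_{i=0}^{k} \rho(\ell(\sigma_i)) + k\kappa .
\]
Since $k \le d_\Gamma(p,q)+1$, there are at most $d_\Gamma(p,q)+2$ internal segments, and $\sum_i \ell(\sigma_i) \le d_\Gamma(p,q)+1$, the right-hand side is an affine function of $d_\Gamma(p,q)$ when $\rho$ is affine, and in general is at most the proper increasing function $L \mapsto (L+2)\rho(L+1) + \kappa(L+1)$ of $L = d_\Gamma(p,q)$. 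Hence $w$ is coarsely Lipschitz (resp.\ controlled), and $\bar w$ is the required mediating morphism.

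The step demanding genuine care is this final estimate: one must confirm that the segment-by-segment bookkeeping is insensitive to replacing a true geodesic — which need not exist in a real-weighted graph — by an almost-geodesic path, and, crucially in $\barCL$, that bounding the \emph{number} of internal segments linearly in $d_\Gamma(p,q)$ is exactly what keeps the resulting upper control affine. The remaining points are routine unwindings of the definitions.
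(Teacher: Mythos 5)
Your argument is correct and complete: the paper itself gives no proof of this proposition (it is imported from \cite{Tang-mono} with a citation), and your direct verification of the universal property --- jointly surjective inclusions for uniqueness, and the decomposition of an almost-geodesic into maximal internal segments separated by glued edges for existence --- is the standard argument one would expect the cited reference to contain. The one point worth stating explicitly rather than leaving implicit is why the displayed bound is affine in the $\barCL$ case: writing $\rho(t)=at+b$, one has $\sum_i \rho(\ell(\sigma_i)) = a\sum_i\ell(\sigma_i)+(k+1)b$, and it is precisely the linear bound $k\le d_\Gamma(p,q)+1$ (each glued edge costs $1$) that keeps the $(k+1)b$ term affine; you flag this correctly, so nothing is missing.
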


    We show that coequalisers can also be computed via a coarse gluing construction. Given a pair of maps $f,g \from A \to X$, define $\Coeq(f,g)$ to be the space $X$ coarsely to glued to itself via $f$ and $g$; this follows the same construction as given above (taking $Y = \emptyset$), but with the glued edges connecting pairs of points in $X$. The underlying identity yields a 1--Lipschitz map $q \from X \to \Coeq(f,g)$. By construction, $qf \approx_1 qg$.
    
    \begin{proposition}[Coequalisers]
     Let $\bar f, \bar g \from A \rightrightarrows X$ be a parallel pair in $\barcat$, and choose representatives $f \in \bar f$, $g \in \bar g$. Then $\bar q \from X \to \Coeq(f,g)$ is the coequaliser of $\bar f, \bar g$ in $\barcat$.
    \end{proposition}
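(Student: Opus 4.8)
The plan is to obtain the coequaliser by combining the two constructions just established. By Proposition~\ref{prop:coeq-pushout}, applied in $\barcat$ (which has binary coproducts, as recalled above), the coequaliser of $\bar f, \bar g \from A \rightrightarrows X$ is the map $\bar q_0 \from X \to Y_0$ appearing in the pushout of $f \sqcup 1_X$ and $g \sqcup 1_X \from A \sqcup X \rightrightarrows X$; and by Proposition~\ref{prop:pushout} this pushout is realised by the coarse gluing $W := X \sqcup_{A \sqcup X} X$ of two copies $X_1, X_2$ of $X$ via $f \sqcup 1_X$ and $g \sqcup 1_X$. So it suffices to produce an isomorphism $W \to \Coeq(f,g)$ in $\barcat$ carrying $\bar q_0$ to $\bar q$.

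First I would unpack $W$: its underlying set is $X_1 \sqcup X_2$, its internal edges lie within a single copy and carry the weights of $d_X$, and for each $z \in A \sqcup X$ there is a glued edge of weight $1$ joining the image of $z$ in $X_1$ to its image in $X_2$; explicitly, $fa \in X_1$ is joined to $ga \in X_2$ for $a \in A$, and $x \in X_1$ is joined to $x \in X_2$ for $x \in X$. The edges of the second type show the two inclusions $X \rightrightarrows W$ are $1$--close, so each represents the morphism $\bar q_0$. I would then take $\phi \from W \to \Coeq(f,g)$ to be the fold map (collapsing $X_1$ and $X_2$ onto $X$) and $\psi \from \Coeq(f,g) \to W$ to be $x \mapsto x \in X_1$, and verify: (i) $\phi$ is $1$--Lipschitz, since each edge of the defining graph of $W$ maps to an edge or loop of the defining graph of $\Coeq(f,g)$ of no greater weight, whence paths do not lengthen; (ii) $\phi\psi$ is the identity on underlying sets while $\psi\phi$ agrees with $\mathrm{id}_W$ on $X_1$ and moves each point of $X_2$ by at most $1$, so $\psi\phi \approx_1 \mathrm{id}_W$; and (iii) $\psi \circ q$ is the inclusion $X \hookrightarrow X_1$ on underlying sets, so $\bar\psi \bar q = \bar q_0$. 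Then $\bar\phi,\bar\psi$ are mutually inverse isomorphisms in $\barcat$ and $\bar q = \bar\phi \bar q_0$ is a coequaliser of $\bar f,\bar g$ as well.

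The one step carrying real content is that $\psi$ is coarsely Lipschitz. A glued edge of the defining graph of $\Coeq(f,g)$ joins $fa$ to $ga$, and in $W$ the corresponding points of $X_1$ are joined by the length--$2$ detour $fa \in X_1 \xrightarrow{\ 1\ } ga \in X_2 \xrightarrow{\ 1\ } ga \in X_1$ through the second copy (using first a glued edge of the first type, then one of the second). Hence a path of length $L$ in the defining graph of $\Coeq(f,g)$ — whose internal weights sum to at most $L$ and which uses at most $L$ glued edges — lifts to a path of length at most $3L$ in $W$, giving $d_W(\psi x,\psi x') \le 3\, d_{\Coeq}(x,x')$. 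I expect this detour estimate to be the only genuine obstacle; the remaining verifications are routine bookkeeping with path metrics on weighted graphs.

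A more self-contained alternative, which I would note as a remark, bypasses $W$ and verifies the universal property of $\bar q$ directly. Uniqueness of the factorisation is immediate since $\bar q$ is epi. For existence, given $\bar h \from X \to Z$ with $\bar h \bar f = \bar h \bar g$, regard $h$ as a map out of $\Coeq(f,g)$ (legitimate since $\Coeq(f,g)$ has underlying set $X$); decomposing a path of length close to $d_{\Coeq}(x,x')$ into alternating maximal internal runs and single glued edges, one bounds $d_Z(hx,hx')$ by summing an upper control for $h$ over the at most $1 + d_{\Coeq}(x,x')$ internal runs and the closeness constant for $hf \approx hg$ over the at most $d_{\Coeq}(x,x')$ glued edges; the resulting bound is proper and increasing in $d_{\Coeq}(x,x')$, and affine when $h$ is coarsely Lipschitz, so $h$ represents a morphism out of $\Coeq(f,g)$ in $\barcat$, as required.
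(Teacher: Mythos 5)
Your main argument is correct and is essentially the paper's own proof: both reduce to the pushout $X \sqcup_{A\sqcup X} X$ via Propositions \ref{prop:coeq-pushout} and \ref{prop:pushout}, and then exhibit the fold map and the inclusion of one copy of $X$ as mutually inverse (up to closeness) morphisms between this pushout and $\Coeq(f,g)$, with the same two-glued-edge detour handling the glued edges (the paper gets the constant $2$ where you get $3$; both suffice). The direct verification of the universal property that you sketch as an alternative is also sound, but it is not needed.
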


    \proof
    By Proposition \ref{prop:pushout}, the upper-left square in the commutative diagram
    \begin{center}
   \begin{tikzcd}
     A \sqcup X \arrow[r, "\bar f \sqcup \bar 1_X"] \arrow[d,"\bar g \sqcup \bar 1_X"'] & X \arrow[ddr,"\bar q", bend left = 10] \arrow[d]& \\
     X \arrow[rrd,"\bar q"', bend right = 10] \arrow[r] & X \sqcup_{A \sqcup X} X \arrow[dr, dashed, "\bar r"] & \\
     & & \Coeq(f,g)
    \end{tikzcd}
    \end{center}
    is a pushout in $\barcat$. Identify the underlying set of $X \sqcup_{A \sqcup X} X$ with $X \times \set{0,1}$; thus, there is a glued edge between $(x,0)$ and $(x,1)$ for each $x \in X$, and a glued edge between $(fa,0)$ and $(ga,1)$ for each $a \in A$. Define a map $r \from X \sqcup_{A \sqcup X} X \to \Coeq(f,g)$ coinciding with the underlying projection $X \times \set{0,1} \to X$. Any pair of points in $X \sqcup_{A \sqcup X} X$ connected by an edge of weight $w$ are mapped by $r$ either to a common point, or to a pair of points connected by an edge of weight $w$. Thus, $r$ is 1--Lipschitz. Now, $q$ factors as $X \to X \sqcup_{A \sqcup X} X \xrightarrow{r} \Coeq(f,g)$, where the first arrow is the inclusion to either copy of $X$. Thus $\bar r \from X \sqcup_{A \sqcup X} X \to \Coeq(f,g)$ is the canonical morphism in $\barcat$ granted by the universal property of the pushout.
    We claim that $\bar r$ is an isomorphism in $\barcat$. It will then follow that $\bar q, \bar q \from X \rightrightarrows \Coeq(f,g)$ yield a pushout, thence $\bar q$ is the desired coequaliser by Proposition \ref{prop:coeq-pushout}.
    
    Define $s \from \Coeq(f,g) \to X \sqcup_{A \sqcup X} X$ by $x \mapsto (x,0)$ on underlying sets. If $x,x'\in X$ are connected by an internal edge, then $sx, sx'$ are connected by an internal edge of the same weight; if there is a glued edge between $x,x'\in X$ then there is a path comprising two glued edges passing through the vertices $sx = (x,0), (x',1), (x',0) = sx'$. Therefore $s$ is 2--Lipschitz. Finally, $rs$ and $sr$ are close the identity maps on their respective domains, hence $\bar r$ is an isomorphism.
    \endproof

    \textbf{Coarse equaliser filtration.} Given maps $f,g \from X \to Y$ and $\kappa \geq 0$, define
    \[\Eq_\kappa(f,g) := \set{x \in X \st d_Y(fx, gx) \leq \kappa} \subseteq X\]
    equipped with the induced metric. These form a filtration $\Eq_*(f,g)$ with bonding maps given by the inclusions $\Eq_\sigma(f,g) \hookrightarrow \Eq_\tau(f,g)$ for $\sigma \leq \tau$. We say $\Eq_*(f,g)$ \emph{stabilises} in $\barcat$ if the bonding maps represent isomorphisms in $\barcat$ for all $\sigma \leq \tau$ large. Since the inclusions are isometric embeddings, stability is equivalent to the inclusions being coarsely surjective for $\sigma \leq \tau$ large; in particular, this does not depend on the choice of the ambient category $\barBorn$ or $\barCL$.
    
     \begin{proposition}[{\cite[Proposition 4.3]{Tang-mono}}]\label{prop:equaliser}
     Let $\bar f, \bar g \from X \rightrightarrows Y$ be a parallel pair in $\barcat$. Choose representatives $f \in \bar f, g \in \bar g$. Then $\bar f, \bar g$ admits an equaliser if and only if $\Eq_*(f,g)$ stabilises. In that case, $\Eq_\kappa(f,g) \hookrightarrow X$ realises the equaliser for all $\kappa \geq 0$ large. \qed
    \end{proposition}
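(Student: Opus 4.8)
The plan is to prove both implications by deciding, for each $\kappa \geq 0$, when the isometric inclusion $\iota_\kappa \from \Eq_\kappa(f,g) \hookrightarrow X$ has the universal property of the equaliser of $\bar f, \bar g$, and to base the whole argument on two elementary observations. First, $\bar f \bar\iota_\kappa = \bar g \bar\iota_\kappa$ in $\barcat$ for every $\kappa$: by definition $d_Y(fx,gx) \leq \kappa$ for all $x \in \Eq_\kappa(f,g)$, so $f\iota_\kappa$ and $g\iota_\kappa$ are $\kappa$--close. Second, each $\bar\iota_\kappa$ is a monomorphism, since $\iota_\kappa$ is an isometric embedding and hence admits the identity function as a lower control, so Proposition \ref{prop:morphisms} applies. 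I would also record once, for use throughout, that all the auxiliary maps constructed below remain morphisms of $\barcat$ when $\barcat = \barCL$: the inclusions $\iota_\kappa$ and bonding maps $\iota_{\sigma,\tau}$ are $1$--Lipschitz, the corestriction of a coarsely Lipschitz map to a subspace (with induced metric) is coarsely Lipschitz, and a coarse inverse of an isomorphism of $\barCL$ is a quasi-isometry by Proposition \ref{prop:morphisms}.

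For the ``if'' direction, assume $\Eq_*(f,g)$ stabilises and fix a threshold $T \geq 0$ with $\iota_{\sigma,\tau}$ an isomorphism in $\barcat$ whenever $T \leq \sigma \leq \tau$. Fix $\kappa \geq T$; I claim $\bar\iota_\kappa$ is the equaliser. It equalises $\bar f, \bar g$ by the first observation. Given $\bar h \from W \to X$ with $\bar f \bar h = \bar g \bar h$, pick a representative $h$ and a constant $\kappa'$ with $fh \approx_{\kappa'} gh$; then $h(W) \subseteq \Eq_{\kappa'}(f,g)$, so $h$ corestricts to $h'' \from W \to \Eq_{\kappa'}(f,g)$ with $\iota_{\kappa'} h'' = h$. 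If $\kappa' \leq \kappa$, take $\bar h' := \bar\iota_{\kappa',\kappa} \bar h''$, so $\bar\iota_\kappa \bar h' = \bar\iota_{\kappa'} \bar h'' = \bar h$ by $\iota_\kappa \iota_{\kappa',\kappa} = \iota_{\kappa'}$. If $\kappa' > \kappa$, then $T \leq \kappa \leq \kappa'$, so $\bar\iota_{\kappa,\kappa'}$ is an isomorphism; writing $\bar j$ for its inverse and $\bar h' := \bar j \bar h''$, we get $\bar\iota_\kappa \bar h' = (\bar\iota_{\kappa'} \bar\iota_{\kappa,\kappa'} \bar j) \bar h'' = \bar\iota_{\kappa'} \bar h'' = \bar h$ using $\iota_{\kappa'}\iota_{\kappa,\kappa'} = \iota_\kappa$ and $\bar\iota_{\kappa,\kappa'} \bar j = \bar 1$. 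Uniqueness of $\bar h'$ is immediate from the second observation. Hence $\bar\iota_\kappa$ realises the equaliser for every $\kappa \geq T$.

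For the ``only if'' direction, let $\bar e \from E \to X$ be an equaliser of $\bar f, \bar g$. Being an equaliser, $\bar e$ is a monomorphism, so a representative $e$ has a lower control; from $\bar f \bar e = \bar g \bar e$ choose $\kappa_1 \geq 0$ with $fe \approx_{\kappa_1} ge$, so that $e(E) \subseteq \Eq_{\kappa_1}(f,g)$ and $e$ corestricts to $e' \from E \to \Eq_{\kappa_1}(f,g)$ with $\bar\iota_{\kappa_1} \bar{e'} = \bar e$. For each $\kappa \geq \kappa_1$, the first observation and the universal property of $\bar e$ supply a unique $\bar{j_\kappa} \from \Eq_\kappa(f,g) \to E$ with $\bar e \bar{j_\kappa} = \bar\iota_\kappa$. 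A short diagram chase -- cancelling the monomorphisms $\bar e$, $\bar\iota_{\kappa_1}$, $\bar\iota_\kappa$ one at a time against the relations $\bar\iota_{\kappa_1} \bar{e'} = \bar e$, $\bar\iota_\kappa \bar\iota_{\kappa_1,\kappa} = \bar\iota_{\kappa_1}$ and $\bar e \bar{j_\kappa} = \bar\iota_\kappa$ -- then shows in turn that $\bar{e'}$ is an isomorphism with inverse $\bar{j_{\kappa_1}}$; that $\bar{k_\kappa} := \bar{e'} \bar{j_\kappa}$ satisfies $\bar\iota_{\kappa_1} \bar{k_\kappa} = \bar\iota_\kappa$ and is a two-sided inverse of the bonding map $\bar\iota_{\kappa_1,\kappa}$ for every $\kappa \geq \kappa_1$; and, composing, that $\bar\iota_{\sigma,\tau}$ is an isomorphism whenever $\kappa_1 \leq \sigma \leq \tau$. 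Thus $\Eq_*(f,g)$ stabilises. Finally, $\bar\iota_{\kappa_1} = \bar e \bar{j_{\kappa_1}}$ with $\bar{j_{\kappa_1}}$ an isomorphism, and $\bar\iota_\kappa = \bar\iota_{\kappa_1} \bar{k_\kappa}$ with $\bar{k_\kappa}$ an isomorphism, so $\bar\iota_\kappa$ realises the equaliser for all $\kappa \geq \kappa_1$.

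I expect the fiddliest part to be the ``only if'' diagram chase: several distinct canonical maps are simultaneously in play -- $\bar{e'}$, the factorisations $\bar{j_\kappa}$, the composites $\bar{k_\kappa}$, and the bonding maps $\bar\iota_{\sigma,\tau}$ -- and each isomorphism assertion must be extracted by cancelling exactly the right monomorphism on exactly the right side, so the bookkeeping has to be laid out carefully even though no single step is deep. A lesser but genuine point, in the ``if'' direction, is that the closeness constant $\kappa'$ of an arbitrary incoming morphism need not exceed the stabilisation threshold $T$, which is precisely why the verification of the universal property splits into the cases $\kappa' \leq \kappa$ and $\kappa' > \kappa$, with stability invoked only in the latter.
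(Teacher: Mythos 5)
Your proof is correct, and it is the standard argument one would give for this statement; the paper itself supplies no proof here, simply citing \cite[Proposition 4.3]{Tang-mono}. Both directions check out — in particular the case split on $\kappa' \lessgtr \kappa$ in the ``if'' direction and the mono-cancellation chase in the ``only if'' direction are handled correctly, and the corestrictions and coarse inverses you use are indeed morphisms in both $\barBorn$ and $\barCL$ as you note.
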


  \section{Epimorphisms in the metric coarse category}

  In this section, we characterise epimorphism classes in $\barBorn$.
  \subsection{Strong and extremal epis}

  \begin{proposition}\label{prop:strong-coarse}
   Every epimorphism in $\barBorn$ is strong.
  \end{proposition}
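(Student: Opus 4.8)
The quickest route is categorical: by Lemma~\ref{lem:strong}, every epimorphism in $\barBorn$ is strong if and only if every monomorphism in $\barBorn$ is strong, and the latter holds since every mono in $\barBorn$ is regular (Theorem~\ref{thm:regmono}), hence strong (Proposition~\ref{prop:implications}). So the statement is essentially a corollary of the dual theory already recorded, and I would open the proof by noting this.

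I would nonetheless also give the direct construction, since it is transparent and makes clear why the analogue fails in $\barCL$. Let $\bar f \from X \to Y$ be an epi; fix a coarsely surjective representative $f$ with $N_r(f(X)) = Y$ and, using choice, a section $\sigma \from Y \to X$ with $d_Y(f\sigma y, y) \le r$ for all $y \in Y$. Given a commutative square with $\bar m \from A \to B$ a mono, say $\bar m \bar u = \bar v \bar f$, choose representatives with $mu \approx_\kappa vf$, let $\rho$ be an upper control for $v$, and recall that $m$, being a mono, is uniformly metrically proper. Set $d := u\sigma \from Y \to A$.

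The key steps are then: (1) \emph{$d$ is controlled} --- for $y,y'$ with $d_Y(y,y') < \infty$, bound $d_B(m d y, m d y') \le 2\kappa + \rho\bigl(2r + d_Y(y,y')\bigr)$ using $mu \approx_\kappa vf$, the control $\rho$, and $d_Y(f\sigma y, f\sigma y') \le 2r + d_Y(y,y')$; then invoke uniform metric properness of $m$ to bound $d_A(d y, d y')$, and note that when $d_Y(y,y') = \infty$ nothing is required. (2) \emph{$\bar m \bar d = \bar v$} --- directly, $d_B(m d y, v y) \le d_B(mu\sigma y, vf\sigma y) + d_B(vf\sigma y, vy) \le \kappa + \rho(r)$. (3) \emph{$\bar d \bar f = \bar u$} --- bound $d_B(m d f x, m u x) \le 2\kappa + \rho(r)$ via $d_Y(f\sigma f x, fx) \le r$, then apply uniform metric properness of $m$ once more to conclude $d_A(d f x, u x)$ is uniformly bounded. (4) \emph{uniqueness of the diagonal} --- immediate: if $\bar d_1, \bar d_2$ both make the lower triangle commute then $\bar m \bar d_1 = \bar v = \bar m \bar d_2$, and $\bar m$ is a mono.

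The one genuine subtlety is step~(1): the upper control one obtains for $d$ is built by feeding $2r + d_Y(y,y')$ through $\rho$ and then through the properness function of $m$ (a coarse inverse of a lower control for $m$); this is proper and increasing but in general not affine, even when $v$ and $m$ are as nice as one likes. This is exactly where the argument fails to upgrade to $\barCL$, consistent with the difference between Theorems~\ref{thm:epi-coarse} and~\ref{thm:epi-clip}. A secondary point is the extended-metric bookkeeping: one checks that whenever $d_Y(y,y')$ is finite, all intermediate distances stay finite, so uniform metric properness genuinely applies.
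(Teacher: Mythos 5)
Your opening paragraph is exactly the paper's proof: the statement is deduced from Theorem~\ref{thm:regmono} (every mono in $\barBorn$ is regular, hence strong by the dual of Proposition~\ref{prop:implications}) together with Lemma~\ref{lem:strong}. The supplementary direct construction of the diagonal $d = u\sigma$ is also correct --- the controlledness, closeness, and uniqueness checks all go through, and your observation that the upper control obtained for $d$ need not be affine correctly pinpoints why the argument does not transfer to $\barCL$ --- but it is not needed and does not change the fact that your primary route coincides with the paper's.
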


  \proof
  This follows from Theorem \ref{thm:regmono} and Lemma \ref{lem:strong}.
  \endproof

  \subsection{Regular epis}

    Let $f \from X \to Y$ be a controlled map and $\pi_1, \pi_2 \from X \times X \rightrightarrows X$ denote the projections to each factor. We define two filtrations associated to $f$.

  \textbf{Coarse kernel filtration.}
  For $\sigma \geq 0$, let
  \[K_\sigma(f) := \Eq_\sigma(f\pi_1, f\pi_2) = \set{(x,x') \in X\times X \st d_Y(fx, fx') \leq \sigma}\subseteq X \times X\]
  equipped with the $\ell^\infty$--metric. These form a filtration $K_*(f)$ whose bonding maps are isometric embeddings $K_\sigma(f) \hookrightarrow K_\tau(f)$ for $\sigma \leq \tau$. Write $\iota_\sigma \from K_\sigma(f) \hookrightarrow X \times X$ for the inclusion.

  \textbf{Coarse quotient filtration.}
  For $\sigma \geq 0$, let $Q_\sigma(f) := \Coeq(\pi_1 \iota_\sigma, \pi_2 \iota_\sigma)$. These form a filtration $Q_*(f)$ with surjective 1--Lipschitz maps $Q_\sigma(f) \twoheadrightarrow Q_\tau(f)$ as bonding maps for $\sigma \leq \tau$ induced by the underlying identity on $X$. Write $q_\sigma \from X \to Q_\sigma(f)$ for the quotient map.
  
  We have the following diagram in $\barcat$ for each $\sigma \geq 0$, where the upper row is a coequaliser diagram. Up to closeness, the function $f_\sigma \from Q_\sigma(f) \to Y$ coinciding with $f$ on underlying sets is the unique map making the triangle commute. Therefore, $\bar f_\sigma$ is the canonical morphism in $\barcat$ granted by the universal property of the coequaliser.
    \begin{center}
    \begin{tikzcd}[row sep = normal, column sep = large]
    K_{\sigma}(f) \arrow[r, "\bar \pi_1 \bar\iota_\sigma", shift left] \arrow[r, "\bar \pi_2 \bar \iota_\sigma"', shift right] & X \arrow[dr, "\bar f"'] \arrow[r,"\bar q_\sigma"] & Q_{\sigma}(f) \arrow[d,"\bar f_\sigma", dashed]\\
    & & Y
    \end{tikzcd}
    \end{center}
  Let us consider stability of the filtrations $K_*(f)$ and $Q_*(f)$; that is, when all bonding maps are isomorphisms in $\barcat$ for $\sigma \leq \tau$ large. Stability of $K_*(f)$ is equivalent to $K_\sigma(f) \hookrightarrow K_\tau(f)$ being coarsely surjective for $\sigma \leq \tau$ large; in particular, this does not depend on the choice of the ambient category $\barBorn$ or $\barCL$. For $Q_*(f)$, the bonding maps are surjective, hence stability in $\barBorn$ (resp.~$\barCL$) is equivalent to the bonding maps being coarse (resp.~quasi-isometric) embeddings for $\sigma \leq \tau$ large.

   \begin{lemma}\label{lem:Q-stab}
    Assume that $Q_*(f)$ stabilises in $\barBorn$. Then for $\sigma$ large, $f_\sigma \from Q_\sigma(f) \to Y$ admits a lower control.
   \end{lemma}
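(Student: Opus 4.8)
The plan is to unpack what stability of $Q_*(f)$ in $\barBorn$ gives us and combine it with the defining behaviour of the coarse kernel filtration. First I would fix $\sigma$ large enough that the bonding maps $Q_\sigma(f) \twoheadrightarrow Q_\tau(f)$ are coarse embeddings (hence isomorphisms in $\barBorn$) for all $\tau \geq \sigma$. Since these bonding maps are surjective $1$--Lipschitz and represent isomorphisms, each admits a lower control; that is, there is a proper increasing $\theta_\tau$ with $d_{Q_\sigma(f)}(x,x') \leq \infty$ whenever $d_{Q_\tau(f)}(q x, q x')<\infty$ and $\theta_\tau\!\left(d_{Q_\sigma(f)}(x,x')\right) \leq d_{Q_\tau(f)}(qx, qx')$. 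The key point is that one can take a single lower control $\theta$ working uniformly for all $\tau \geq \sigma$: indeed the metrics $d_{Q_\tau(f)}$ are monotone decreasing in $\tau$ (more glued edges shrink distances), so the worst case is the limit as $\tau \to \infty$, and stability means this limit is attained up to bounded error by some finite $\tau_0$; take $\theta = \theta_{\tau_0}$.

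Next I would compare $Q_\tau(f)$ with $Y$. By construction $f_\tau \from Q_\tau(f)\to Y$ coincides with $f$ on underlying sets and is the canonical map from the coequaliser; the glued edges used to form $Q_\tau(f)$ connect pairs $(x,x')$ with $d_Y(fx,fx')\leq \tau$, each given weight $1$. Hence for any path in $Q_\tau(f)$ realising $d_{Q_\tau(f)}(x,x')$, alternating internal and glued edges, one gets $d_Y(fx,fx') \leq \tau\cdot d_{Q_\tau(f)}(x,x')$ — more precisely, each internal edge of weight $w$ contributes at most $w$ to $d_Y$ and each glued edge contributes at most $\tau$; bounding the number of glued edges by the total weight gives an affine, in fact linear, estimate $d_Y(fx,fx') \leq \tau\cdot d_{Q_\tau(f)}(fx,fx')$. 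Conversely, and this is the substantive direction, I claim $d_{Q_\tau(f)}(x,x') \leq 1$ whenever $d_Y(fx,fx')\leq \tau$, simply because such a pair is joined by a glued edge of weight $1$; this is exactly the statement that $f_\tau$ becomes uniformly metrically proper at scale $\tau$.

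Combining: to bound $d_X$-type distances in $Q_\sigma(f)$ in terms of $d_Y$, take $x,x'$ with $d_Y(fx,fx') = R$. Pick $\tau = \max(R,\sigma)$; then $d_{Q_\tau(f)}(x,x') \leq 1$, and applying the uniform lower control $\theta$ for the bonding map $Q_\sigma(f)\to Q_\tau(f)$ yields $d_{Q_\sigma(f)}(x,x') \leq \theta^{-1}(1)$, a constant independent of $R$ — which a priori looks too strong, so the correct bookkeeping is to iterate in unit steps: $d_Y(fx,fx')\leq \tau$ forces $d_{Q_\tau(f)}(x,x')\leq 1$, hence (uniform lower control) $d_{Q_\sigma(f)}(x,x')$ bounded, and for general $R$ one chains $\lceil R/\sigma\rceil$ such steps along a $d_Y$--geodesic-like sequence, producing a bound $d_{Q_\sigma(f)}(x,x') \leq C(R)$ with $C$ proper and increasing. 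This $C$ is the desired lower control for $f_\sigma$, establishing that $f_\sigma$ is uniformly metrically proper, equivalently admits a lower control. The main obstacle is precisely the uniformity in the previous paragraph: extracting one lower control $\theta$ that governs all bonding maps $Q_\sigma(f)\to Q_\tau(f)$ simultaneously, for which I would lean on the monotonicity of the filtration metrics together with the definition of stabilisation (coarse surjectivity/isomorphism of bonding maps for all large $\tau$), so that only finitely much "new collapsing" can occur beyond the threshold $\sigma$.
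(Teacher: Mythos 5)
There is a genuine gap, and it sits exactly where you flagged ``the main obstacle'': the claim that a single lower control $\theta$ works uniformly for all bonding maps $Q_\sigma(f) \to Q_\tau(f)$, $\tau \geq \sigma$, is false in general. Take $f = 1_\Z \from \Z \to \Z$: then $Q_\tau(f)$ is $\Z$ with distances scaled down by a factor of roughly $\tau$, each bonding map $Q_\sigma(f)\to Q_\tau(f)$ is an isomorphism in $\barBorn$, but its lower control degenerates like $r(\tau)\approx \tau/\sigma$, so no uniform $\theta$ exists. (Monotonicity of the metrics in $\tau$ does not help: stabilisation only says that each bonding map at finite $\tau$ is an isomorphism, with constants that may blow up as $\tau \to \infty$.) You correctly sensed that the uniform claim yields an absurdly strong conclusion, but the repair you propose --- chaining $\lceil R/\sigma\rceil$ unit steps along a ``$d_Y$--geodesic-like sequence'' --- also fails: nothing guarantees that $fx$ and $fx'$ are joined by a chain of points of $f(X)$ with consecutive $d_Y$--gaps at most $\sigma$ (for instance $X = Y = \set{n^2 \st n \in \N}$ with $f$ the identity, where $Q_*(f)$ stabilises but no such chains exist between distant points).

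The fix is that no uniformity is needed at all. A lower control is a statement quantified over scales: for every $R$ there must exist some $S(R)$ with $d_Y(f_\sigma x, f_\sigma x') \leq R \implies d_{Q_\sigma(f)}(x,x') \leq S(R)$, and $S$ is allowed to depend on $R$. Your own ``substantive direction'' already does the work: given $R$, set $\tau = \max(R,\sigma)$, note that $d_Y(fx,fx') \leq \tau$ forces a glued edge in $Q_\tau(f)$, hence $d_{Q_\tau(f)}(x,x') \leq 1$, and then apply the lower control of the \emph{single} bonding map $Q_\sigma(f) \to Q_\tau(f)$ for that particular $\tau$ to conclude $d_{Q_\sigma(f)}(x,x') \leq r(\tau)$. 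The assignment $R \mapsto r(\max(R,\sigma))$ is the desired lower control for $f_\sigma$. This is precisely the paper's (much shorter) proof.
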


   \proof
   Assume $Q_*(f)$ stabilises at threshold $\sigma_0$ in $\barBorn$. Then for each $\tau \geq \sigma\geq \sigma_0$, the map $Q_\sigma(f) \twoheadrightarrow Q_\tau(f)$ admits a lower control. Therefore, for each $\tau \geq \sigma$ there exists $r(\tau) \geq 0$ such that for all $x,x' \in X$,  whenever $d_{Q_\tau(f)}(x,x') \leq 1$ holds then $d_{Q_\sigma(f)}(x,x') \leq r(\tau)$. Consequently,
   \[d_Y(fx,fx') \leq \tau \iff (x,x') \in K_\tau(f) \implies d_{Q_\tau(f)}(x,x') \leq 1 \implies d_{Q_\sigma(f)}(x,x') \leq r(\tau) \]
   for all $\tau \geq \sigma$ and $x,x' \in X$, hence $f_\sigma$ admits a lower control.
   \endproof

   \begin{lemma}\label{lem:regular-coarse}
   Let $\bar f \from X \to Y$ be a morphism in $\barcat$. Let $f$ be any representative of $\bar f$. Then the following are equivalent:
   \begin{enumerate}
    \item $\bar f$ is a regular epimorphism in $\barcat$,
    \item $\bar f_\sigma \from Q_\sigma (f) \to Y$ is an isomorphism in $\barcat$ for $\sigma$ large, and
    \item $K_\sigma(f) \rightrightarrows X \xrightarrow{\bar f} Y$ is a coequaliser diagram in $\barcat$ for $\sigma$ large.
   \end{enumerate}
   Moreover, if any of the above hold, then $Q_*(f)$ stabilises in $\barcat$.
  \end{lemma}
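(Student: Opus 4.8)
The plan is to prove the cyclic chain of implications $(1)\Rightarrow(2)\Rightarrow(3)\Rightarrow(1)$ and then derive the ``moreover'' clause from $(2)$.

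Two of the implications are purely formal. For $(2)\Rightarrow(3)$: by construction $\bar q_\sigma\from X\to Q_\sigma(f)$ is the coequaliser of the pair $\bar\pi_1\bar\iota_\sigma,\bar\pi_2\bar\iota_\sigma\from K_\sigma(f)\rightrightarrows X$, and $\bar f=\bar f_\sigma\bar q_\sigma$; since post-composing a coequaliser with an isomorphism again yields a coequaliser of the same pair, if $\bar f_\sigma$ is an isomorphism (i.e.\ for $\sigma$ large) then $K_\sigma(f)\rightrightarrows X\xrightarrow{\bar f}Y$ is a coequaliser diagram. And $(3)\Rightarrow(1)$ is immediate from the definition of a regular epimorphism.

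The substantive implication is $(1)\Rightarrow(2)$. Suppose $\bar f=\Coeq(\bar u,\bar v)$ for some parallel pair $\bar u,\bar v\from A\rightrightarrows X$; in particular $\bar f$ is an epimorphism. Choosing representatives, $\bar f\bar u=\bar f\bar v$ yields a constant $\kappa_0\geq0$ with $(ua,va)\in K_{\kappa_0}(f)$ for all $a\in A$. For every $\sigma\geq\kappa_0$, the inclusion $K_{\kappa_0}(f)\subseteq K_\sigma(f)$ together with the coarse-gluing description of $Q_\sigma(f)$ shows that $ua$ and $va$ lie at distance at most $1$ in $Q_\sigma(f)$ for every $a\in A$, hence $\bar q_\sigma\bar u=\bar q_\sigma\bar v$. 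The universal property of the coequaliser $\bar f$ then supplies a unique morphism $\bar g_\sigma\from Y\to Q_\sigma(f)$ with $\bar g_\sigma\bar f=\bar q_\sigma$. Cancelling the epimorphism $\bar q_\sigma$ in $\bar g_\sigma\bar f_\sigma\bar q_\sigma=\bar g_\sigma\bar f=\bar q_\sigma$ gives $\bar g_\sigma\bar f_\sigma=\bar 1_{Q_\sigma(f)}$, and cancelling the epimorphism $\bar f$ in $\bar f_\sigma\bar g_\sigma\bar f=\bar f_\sigma\bar q_\sigma=\bar f$ gives $\bar f_\sigma\bar g_\sigma=\bar 1_Y$. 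Thus $\bar f_\sigma$ is an isomorphism for every $\sigma\geq\kappa_0$, which is $(2)$.

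For the final clause, assume $(2)$ holds, say $\bar f_\sigma$ is an isomorphism for all $\sigma\geq\sigma_0$. Since $f_\sigma$, $f_\tau$, and the bonding map $Q_\sigma(f)\to Q_\tau(f)$ all coincide on underlying sets with $f$ and the identity respectively, the composite of $\bar f_\tau$ with the bonding map equals $\bar f_\sigma$ for $\sigma_0\leq\sigma\leq\tau$; as $\bar f_\sigma$ and $\bar f_\tau$ are isomorphisms, so is the bonding map. Hence $Q_*(f)$ stabilises in $\barcat$. The only step requiring care is the identity $\bar q_\sigma\bar u=\bar q_\sigma\bar v$, which is exactly where the explicit construction of $Q_\sigma(f)$ enters; everything else is diagram chasing, using that coequalisers are epimorphisms and that $\bar f_\sigma$ is already known to be a morphism of $\barcat$ making the relevant triangle commute.
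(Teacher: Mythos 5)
Your proposal is correct and follows essentially the same route as the paper: the same formal implications $(2)\Rightarrow(3)\Rightarrow(1)$, the same construction of the inverse $\bar g_\sigma$ (the paper's $\bar\phi_\sigma$) via the glued edges on $K_\sigma(f)$ and the universal property, and the same derivation of stabilisation from $\bar f_\tau^{-1}\bar f_\sigma$ being the bonding map. The only cosmetic difference is that you verify both composites equal the identity by cancelling the epimorphisms $\bar q_\sigma$ and $\bar f$, whereas the paper establishes only $\bar f_\sigma\bar\phi_\sigma=\bar 1_Y$ and then invokes that a left-invertible epimorphism is an isomorphism.
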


  \proof
  The implications (2) $\implies$ (3) and (3) $\implies$ (1) are immediate.

  (1) $\implies$ (2).
    Assume $\bar f$ is the coequaliser of some parallel pair $\bar g, \bar h \from W \rightrightarrows X$ in $\barcat$. Choose representatives $g\in \bar g, h\in \bar h$. Since $\bar f \bar g = \bar f \bar h$, there exists some $\kappa \geq 0$ such that $fg \approx_\kappa fh$. Let $\sigma \geq \kappa$. Then $(gw, hw) \in K_\kappa(f) \subseteq K_\sigma(f)$ for all $w \in W$ and so $q_\sigma g \approx_1 q_\sigma h$. Consequently, there exists a unique morphism $\bar \phi_\sigma \from Y \to Q_\sigma(f)$ such that $\bar q_\sigma = \bar \phi_\sigma \bar f$.  We claim that $\bar f_\sigma = \bar \phi_\sigma^{-1}$ in $\barcat$.
    \begin{center}
    \begin{tikzcd}[row sep = large, column sep = large]
    W \arrow[r, "\bar g", shift left] \arrow[r, "\bar h"', shift right] & X \arrow[r, "\bar f"] \arrow[dr,"\bar q_\sigma"'] & Y \arrow[d,"\bar \phi_\sigma"', dashed, bend right = 20]\\
    & & Q_{\sigma}(f) \arrow[u,"\bar f_\sigma"', bend right = 20]
    \end{tikzcd}
    \end{center}

   Observe that $\bar f_\sigma \bar \phi_\sigma \bar f = \bar f_\sigma \bar q_\sigma = \bar f$ satisfies $\bar f_\sigma \bar \phi_\sigma \bar f \bar g = \bar f_\sigma \bar \phi_\sigma \bar f \bar h$. Appealing to the universal property of $\bar f$, we deduce that $\bar f_\sigma \bar \phi_\sigma = \bar 1_Y$ and so $\bar \phi_\sigma$ admits a left-inverse $\bar f_\sigma$. As $\bar q_\sigma$ is epic, so is $\bar \phi_\sigma$. Since left-invertible epimorphisms are isomorphisms, it follows that $\bar f_\sigma = \bar \phi_\sigma^{-1}$.

  The moreover statement then follows since $\bar f_\tau \bar f_\sigma^{-1} \from Q_\sigma(f) \to Q_\tau(f)$ is an isomorphism for $\sigma \leq \tau$ large.
  \endproof

  With the additional assumption that $f$ is coarsely surjective (i.e. $\bar f$ is an epimorphism), stability of $Q_*(f)$ characterises the regular epimorphism in $\barBorn$.

  \begin{proposition}\label{prop:reg-coarse}
   An epimorphism $\bar f \from X \to Y$ in $\barBorn$ is a regular epimorphism if and only $Q_*(f)$ stabilises in $\barBorn$ for any representative $f$.
  \end{proposition}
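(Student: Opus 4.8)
The plan is to obtain the equivalence by assembling the three preceding results, with no new computation required. The forward implication is already contained in Lemma \ref{lem:regular-coarse}: if $\bar f$ is a regular epimorphism in $\barBorn$ then the ``moreover'' clause of that lemma (applied with $\barcat = \barBorn$) says precisely that $Q_*(f)$ stabilises in $\barBorn$, and since Lemma \ref{lem:regular-coarse} is stated for an arbitrary representative $f$ of $\bar f$, this handles the ``for any representative'' phrasing at once.

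For the converse, suppose $Q_*(f)$ stabilises in $\barBorn$ for some representative $f$; I want to verify condition (2) of Lemma \ref{lem:regular-coarse}, namely that $\bar f_\sigma \from Q_\sigma(f) \to Y$ is an isomorphism in $\barBorn$ for $\sigma$ large. First, Lemma \ref{lem:Q-stab} supplies a lower control for $f_\sigma$ once $\sigma$ is large enough. Second, I invoke the hypothesis that $\bar f$ is an epimorphism: by Proposition \ref{prop:morphisms} the representative $f$ is coarsely surjective, and since $q_\sigma \from X \to Q_\sigma(f)$ is a surjection of underlying sets while $f_\sigma$ agrees with $f$ pointwise, we have $f_\sigma(Q_\sigma(f)) = f(X)$ as subsets of $Y$; hence $f_\sigma$ is coarsely surjective as well. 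Thus $f_\sigma$ is coarsely surjective and admits a lower control, i.e.\ it is a coarse equivalence, so by Proposition \ref{prop:morphisms} again $\bar f_\sigma$ is an isomorphism in $\barBorn$. Lemma \ref{lem:regular-coarse} ((2) $\implies$ (1)) then concludes that $\bar f$ is a regular epimorphism.

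I do not expect a genuine obstacle here, as the substantive content lives in Lemmas \ref{lem:Q-stab} and \ref{lem:regular-coarse}. The only points needing a word of care are the transfer of coarse surjectivity from $f$ to $f_\sigma$ (immediate, since their images in $Y$ literally coincide and $q_\sigma$ is a surjection of underlying sets) and the representative-independence of stabilisation (automatic: stabilisation for one representative yields a regular epi, which by the forward direction forces stabilisation for every representative). It is perhaps worth remarking that without the epimorphism hypothesis, Lemma \ref{lem:Q-stab} alone only makes $f_\sigma$ a coarse equivalence of $Q_\sigma(f)$ onto the subspace $f(X) \subseteq Y$; the coarse-surjectivity assumption is exactly what promotes this to an isomorphism onto $Y$, and hence what promotes the factorisation $X \to Q_\sigma(f) \hookrightarrow Y$ to a genuine coequaliser.
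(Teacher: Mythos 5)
Your proposal is correct and follows essentially the same route as the paper: the forward direction via the ``moreover'' clause of Lemma \ref{lem:regular-coarse}, and the converse by combining the lower control from Lemma \ref{lem:Q-stab} with coarse surjectivity of $f_\sigma$ (inherited from the epi hypothesis via Proposition \ref{prop:morphisms}) to make $\bar f_\sigma$ an isomorphism. Your explicit remarks on the transfer of coarse surjectivity to $f_\sigma$ and on representative-independence are details the paper leaves implicit, but they do not constitute a different argument.
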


  \proof
  The forwards implication follows from Lemma \ref{lem:regular-coarse}. For the converse, the assumption that $\bar f$ is epic means that $f$ is coarsely surjective. Therefore, by Lemma \ref{lem:Q-stab}, $f_\sigma$ is a coarse equivalence for $\sigma$ large. Consequently, $\bar f_\sigma$ is an isomorphism in $\barBorn$ for $\sigma$ large by Proposition \ref{prop:morphisms}. The result follows using Lemma \ref{lem:regular-coarse}.
  \endproof

   \begin{corollary}\label{cor:reg-epi-enlarge}
   Any regular epi in $\barCL$ is also a regular epi in $\barBorn$. \qed
  \end{corollary}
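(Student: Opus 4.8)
The plan is to run everything through the stability criterion for the coarse quotient filtration $Q_*(f)$, using that this filtration is defined purely metrically and hence is the same object whether we work in $\barCL$ or $\barBorn$. The only genuine input is the trivial implication ``quasi-isometry $\Rightarrow$ coarse equivalence''.

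Concretely, fix a representative $f$ of the given regular epi $\bar f \from X \to Y$ in $\barCL$. By the characterisation in Lemma \ref{lem:regular-coarse} (applied in the category $\barCL$), being a regular epi in $\barCL$ is equivalent to $\bar f_\sigma \from Q_\sigma(f) \to Y$ being an isomorphism in $\barCL$ for all $\sigma$ large. By Proposition \ref{prop:morphisms}, an isomorphism in $\barCL$ is represented by a quasi-isometry, so $f_\sigma$ is a quasi-isometry for $\sigma$ large. Since every quasi-isometry is in particular a coarse equivalence (an affine lower control is a lower control, and coarse surjectivity is unchanged), $f_\sigma$ is a coarse equivalence, hence $\bar f_\sigma$ is an isomorphism in $\barBorn$ for $\sigma$ large, again by Proposition \ref{prop:morphisms}. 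Applying the equivalence (2) $\Leftrightarrow$ (1) of Lemma \ref{lem:regular-coarse} this time in $\barBorn$, we conclude that $\bar f$ is a regular epi in $\barBorn$.

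An alternative, essentially equivalent route goes through Proposition \ref{prop:reg-coarse}: a regular epi in $\barCL$ is in particular an epi in $\barCL$, so $f$ is coarsely surjective, a condition independent of the ambient category, whence $\bar f$ is an epi in $\barBorn$; meanwhile the ``moreover'' clause of Lemma \ref{lem:regular-coarse} gives that $Q_*(f)$ stabilises in $\barCL$, i.e.\ the surjective bonding maps $Q_\sigma(f) \twoheadrightarrow Q_\tau(f)$ are quasi-isometries for $\sigma \leq \tau$ large, hence coarse equivalences, so $Q_*(f)$ stabilises in $\barBorn$; then Proposition \ref{prop:reg-coarse} finishes.

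I do not anticipate a real obstacle. The one point requiring (mild) care is to be explicit that $Q_*(f)$ and its bonding maps do not depend on whether morphisms are taken to be controlled or coarsely Lipschitz, so that ``stabilises in $\barCL$'' legitimately feeds into a statement about $\barBorn$; this is already remarked in the discussion preceding Lemma \ref{lem:Q-stab} and needs no further argument.
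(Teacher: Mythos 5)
Your proposal is correct and is essentially the paper's intended argument: the corollary is stated with an immediate \qed precisely because it follows from Lemma \ref{lem:regular-coarse} and Proposition \ref{prop:reg-coarse} via the observation that quasi-isometries are coarse equivalences and that $Q_*(f)$ (and coarse surjectivity of $f$) are category-independent. Both of your routes are valid spellings-out of this.
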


  \subsection{Effective epis}

  We characterise the existence of kernel pairs in $\barcat$ using the coarse kernel filtration.
  \begin{lemma}\label{lem:K-stab}
   A morphism $\bar f \from X \to Y$ in $\barcat$ admits a kernel pair if and only if $K_*(f)$ stabilisies for any representative $f$. In that case, $\bar\pi_1\bar\iota_\sigma, \bar\pi_2\bar\iota_\sigma \from K_{\sigma}(f) \rightrightarrows X$ is the kernel pair of $\bar f$ in $\barcat$ for $\sigma$ large.
  \end{lemma}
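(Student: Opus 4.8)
The plan is to reduce directly to Propositions \ref{prop:kernel-eq} and \ref{prop:equaliser}, exploiting the definitional identity $K_\sigma(f) = \Eq_\sigma(f\pi_1, f\pi_2)$.

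First I would observe that $f\pi_1, f\pi_2 \from X \times X \to Y$ are morphisms of $\barcat$: the projections $\pi_1,\pi_2$ from the $\ell^\infty$--product are $1$--Lipschitz, so these are composites of (coarsely Lipschitz, resp.~controlled) maps. Thus $\overline{f\pi_1}, \overline{f\pi_2} \from X \times X \rightrightarrows Y$ is a genuine parallel pair in $\barcat$. By Proposition \ref{prop:kernel-eq}, $\bar f$ admits a kernel pair in $\barcat$ if and only if this parallel pair admits an equaliser; and if $\bar e \from E \to X \times X$ realises that equaliser, then $\bar\pi_1 \bar e, \bar\pi_2 \bar e \from E \rightrightarrows X$ is the kernel pair of $\bar f$.

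Next I would identify the equaliser filtration. By the very definition of the coarse kernel filtration, for each $\kappa \geq 0$,
\[
\Eq_\kappa(f\pi_1, f\pi_2) = \set{(x,x') \in X\times X \st d_Y(fx, fx') \leq \kappa} = K_\kappa(f)
\]
as subspaces of $X \times X$ (with the same induced $\ell^\infty$--metric), and the bonding inclusions agree; so $\Eq_*(f\pi_1, f\pi_2)$ and $K_*(f)$ are literally the same filtration. Now Proposition \ref{prop:equaliser} says that $\overline{f\pi_1}, \overline{f\pi_2}$ admits an equaliser if and only if $\Eq_*(f\pi_1, f\pi_2) = K_*(f)$ stabilises, and that in that case the inclusion $\bar\iota_\sigma \from K_\sigma(f) \hookrightarrow X \times X$ realises the equaliser for all $\sigma$ large. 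Combining with the previous paragraph: $\bar f$ admits a kernel pair if and only if $K_*(f)$ stabilises, and for $\sigma$ large the kernel pair is $\bar\pi_1 \bar\iota_\sigma, \bar\pi_2\bar\iota_\sigma \from K_\sigma(f) \rightrightarrows X$ --- here I use that the pairing $\langle \bar\pi_1\bar\iota_\sigma, \bar\pi_2\bar\iota_\sigma\rangle \from K_\sigma(f) \to X \times X$ equals $\bar\iota_\sigma$ itself, by uniqueness of the map into the product, so it is exactly the equaliser morphism appearing in Proposition \ref{prop:kernel-eq}.

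Finally, to justify the phrase ``for any representative $f$'', I would note that stability of $K_*(f)$ does not depend on the choice of $f \in \bar f$: if $f' \approx_\kappa f$ then $K_\sigma(f) \subseteq K_{\sigma + 2\kappa}(f')$ and $K_\sigma(f') \subseteq K_{\sigma+2\kappa}(f)$, so $K_*(f)$ and $K_*(f')$ are mutually cofinal filtrations via isometric embeddings, whence one stabilises if and only if the other does. There is no substantial obstacle here; the only points needing care are the bookkeeping of representatives and the trivial check that $\langle\bar\pi_1\bar\iota_\sigma, \bar\pi_2\bar\iota_\sigma\rangle = \bar\iota_\sigma$, both of which are immediate.
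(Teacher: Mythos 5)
Your proof is correct and follows essentially the same route as the paper: both reduce the statement to Proposition \ref{prop:kernel-eq} together with Proposition \ref{prop:equaliser}, via the definitional identity $K_*(f) = \Eq_*(f\pi_1, f\pi_2)$. The extra checks you include (that $f\pi_1, f\pi_2$ are morphisms, that $\langle\bar\pi_1\bar\iota_\sigma,\bar\pi_2\bar\iota_\sigma\rangle = \bar\iota_\sigma$, and that stability is independent of the representative) are correct and simply make explicit what the paper leaves implicit.
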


  \proof
  By Proposition \ref{prop:equaliser}, $\Eq_*(f\pi_1, f\pi_2) = K_*(f)$ stabilises if and only if the equaliser of $\bar f\bar\pi_1, \bar f\bar\pi_2$ exists in $\barcat$; in that case $K_\sigma(f) = \Eq_\sigma(f\pi_1, f\pi_2) \hookrightarrow X \times X$ realises the equaliser for $\sigma$ large. The result then follows from Propositon \ref{prop:kernel-eq}.
  \endproof

  \begin{lemma}\label{lem:KQ-stab}
   If $K_*(f)$ stabilises at threshold $\sigma$ in $\barcat$ then so does $Q_*(f)$.
  \end{lemma}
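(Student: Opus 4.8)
The plan is to prove directly that for every $\sigma \le \tau_1 \le \tau_2$ the bonding map $q_{\tau_1,\tau_2} \from Q_{\tau_1}(f) \to Q_{\tau_2}(f)$ --- the underlying identity on $X$ --- is a quasi-isometry; by Proposition~\ref{prop:morphisms} this shows $Q_*(f)$ stabilises at threshold $\sigma$ in $\barBorn$ and in $\barCL$ alike. Since $q_{\tau_1,\tau_2}$ is surjective and $1$--Lipschitz by construction, it suffices to produce an affine lower control. First I would unwind the hypothesis: the inclusions $K_\sigma(f) \hookrightarrow K_\tau(f)$ are isometric embeddings, so stability of $K_*(f)$ at threshold $\sigma$ means $K_\sigma(f) \hookrightarrow K_\tau(f)$ is coarsely surjective for all $\tau \ge \sigma$. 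Thus for $\tau_2 \ge \sigma$ there is $R = R(\tau_2) \ge 0$ with $K_{\tau_2}(f) \subseteq N_R(K_\sigma(f))$ inside $X \times X$: every $(a,a') \in K_{\tau_2}(f)$ admits some $(b,b') \in K_\sigma(f) \subseteq K_{\tau_1}(f)$ with $d_X(a,b) \le R$ and $d_X(a',b') \le R$, and in particular these distances are finite.

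The heart of the argument is edge-surgery between the path metrics. The graphs underlying $Q_{\tau_1}(f)$ and $Q_{\tau_2}(f)$ share the same vertex set $X$ and the same internal edges (of weight $d_X(x,x')$), differing only in their glued edges: $Q_\tau(f)$ has a weight--$1$ edge joining $a$ and $a'$ exactly when $(a,a') \in K_\tau(f)$. Given a glued edge $\{a,a'\}$ of $Q_{\tau_2}(f)$, choose $(b,b') \in K_\sigma(f)$ as above; then $a,b,b',a'$ traces a path in $Q_{\tau_1}(f)$ of length at most $R + 1 + R$ --- an internal edge of weight $\le R$, the glued edge $\{b,b'\}$ of weight $1$ (which lies in $Q_{\tau_1}(f)$ since $K_\sigma(f) \subseteq K_{\tau_1}(f)$), and an internal edge of weight $\le R$ --- with degenerate cases, where endpoints coincide, only shortening this. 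Now let $x,x' \in X$ with $d_{Q_{\tau_2}(f)}(x,x') < \infty$, and for $\epsilon > 0$ pick a path in $Q_{\tau_2}(f)$ from $x$ to $x'$ of length below $d_{Q_{\tau_2}(f)}(x,x') + \epsilon$ having internal edges of total weight $w$ and exactly $n$ glued edges, so $w + n < d_{Q_{\tau_2}(f)}(x,x') + \epsilon$ (the weight of a glued edge being exactly $1$ is precisely what bounds $n$). Replacing each glued edge by the detour above yields a path in $Q_{\tau_1}(f)$ of length at most $w + n(2R+1) \le (1+2R)(w+n)$, so letting $\epsilon \to 0$ gives $d_{Q_{\tau_1}(f)}(x,x') \le (1+2R)\, d_{Q_{\tau_2}(f)}(x,x')$. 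The same surgery sends finite paths in $Q_{\tau_2}(f)$ to finite paths in $Q_{\tau_1}(f)$, so $d_{Q_{\tau_1}(f)}(x,x') = \infty$ forces $d_{Q_{\tau_2}(f)}(x,x') = \infty$. Hence $t \mapsto t/(1+2R)$ is an affine lower control for $q_{\tau_1,\tau_2}$; together with surjectivity and the $1$--Lipschitz upper bound this makes $q_{\tau_1,\tau_2}$ a quasi-isometry, as required.

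I do not anticipate a genuine obstacle; the only delicate point is bounding the number of glued edges in the chosen path, which succeeds precisely because glued edges have weight $1$. Note that the construction is insensitive to the ambient category --- it produces honest quasi-isometries --- so it establishes stability of $Q_*(f)$ at threshold $\sigma$ in both $\barBorn$ and $\barCL$ at once, consistent with the earlier remark that for the (surjective) filtration $Q_*(f)$ stability is equivalent to the bonding maps being coarse, respectively quasi-isometric, embeddings.
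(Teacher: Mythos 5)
Your proof is correct, but it takes a genuinely different route from the paper. The paper argues categorically: stability of $K_*(f)$ at threshold $\sigma$ means the inclusion $K_\sigma(f) \hookrightarrow K_\tau(f)$, together with $1_X$, gives a natural isomorphism between the parallel pairs $K_\sigma(f) \rightrightarrows X$ and $K_\tau(f) \rightrightarrows X$; since $\bar q_\sigma$ and $\bar q_\tau$ are the respective coequalisers, the induced canonical morphism $Q_\sigma(f) \to Q_\tau(f)$ is an isomorphism, and one checks it is represented by the bonding map because the bonding map makes the relevant square commute. You instead work directly with the weighted graphs: using $K_{\tau_2}(f) \subseteq N_R(K_\sigma(f))$ you replace each glued edge of $Q_{\tau_2}(f)$ by a detour of length at most $2R+1$ through a glued edge of $Q_\sigma(f) \subseteq Q_{\tau_1}(f)$, and the observation that glued edges have weight exactly $1$ bounds the number of substitutions by the path length, yielding $d_{Q_{\tau_1}(f)} \leq (2R+1)\,d_{Q_{\tau_2}(f)}$ together with the required behaviour at infinite distance. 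The paper's argument is shorter and purely formal (it needs only uniqueness of coequalisers and the explicit $\Coeq$ construction), whereas yours produces explicit affine controls, verifies directly that the bonding map itself (rather than some abstract canonical isomorphism) is a quasi-isometry, and makes transparent that the conclusion holds simultaneously in $\barBorn$ and $\barCL$. Both are complete proofs; your edge-surgery technique is essentially the same style of estimate the paper later deploys in Lemma \ref{lem:ext-qi}, so it fits naturally into the toolkit even though it is not how this particular lemma is proved there.
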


  \begin{proof}
    Assume $K_*(f)$ stabilises at threshold $\sigma \geq 0$ and let $\tau \geq \sigma$. Then there exists a natural isomorphism between the pairs $\bar \pi_1 \bar\iota_\sigma, \bar \pi_2 \bar\iota_\sigma \from K_{\sigma}(f) \rightrightarrows X$ and $\bar \pi_1 \bar\iota_\tau, \bar \pi_2 \bar\iota_\tau \from K_{\tau}(f) \rightrightarrows X$ in $\barcat$ whose components are represented by the inclusion $K_\sigma(f) \hookrightarrow K_\tau(f)$ and identity map $1_X \from X \to X$.
    \begin{center}
        \begin{tikzcd}[row sep = normal, column sep = large]
        K_{\sigma}(f) \arrow[r, "\bar \pi_1 \bar\iota_\sigma", shift left] \arrow[r, "\bar \pi_2 \bar \iota_\sigma"', shift right] \arrow[d]  & X \arrow[d, "\bar 1_X"] \arrow[r,"\bar q_\sigma"] & Q_{\sigma}(f) \arrow[d,"\bar \phi", dashed]\\
        K_{\tau}(f) \arrow[r, "\bar \pi_1 \bar\iota_\tau", shift left] \arrow[r, "\bar \pi_2 \bar \iota_\tau"', shift right] & X  \arrow[r,"\bar q_\tau"'] & Q_{\tau}(f)
        \end{tikzcd}
    \end{center}
    Now, $\bar q_\sigma \from X \to Q_\sigma(f)$ and $\bar q_\tau \from X \to Q_\tau(f)$ are coequalisers of these respective parallel pairs. Therefore, this natural isomorphism induces a canonical morphism $\bar\phi \from Q_\sigma(f) \to Q_\tau(f)$ in $\barcat$ which, moreover, is an isomorphism. By construction, the bonding map $Q_\sigma(f) \to Q_\tau(f)$ (coinciding with the underlying identity of $X$) from the filtration $Q_*(f)$  makes the right-hand square in the diagram commute, and is hence a representative of $\bar\phi$. Thus $Q_*(f)$ stabilises.
   \end{proof}

  \begin{proposition}\label{prop:eff-coarse}
  An epimorphism $\bar f \from X \to Y$ is an effective epimorphism in $\barBorn$ if and only if $K_*(f)$ stabilises for any representative $f$.
    \end{proposition}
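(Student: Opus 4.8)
The plan is to assemble the structural results already established: being an effective epi bundles together the property of admitting a kernel pair with the property of being the coequaliser of that kernel pair, and these two halves are governed by the filtrations $K_*(f)$ and $Q_*(f)$ respectively. The forward implication is purely formal. If $\bar f$ is an effective epimorphism in $\barBorn$ then, by definition, its kernel pair exists; Lemma~\ref{lem:K-stab} then says precisely that $K_*(f)$ stabilises for any (equivalently, every) representative $f$.

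For the converse, suppose $\bar f$ is an epimorphism and that $K_*(f)$ stabilises. First, Lemma~\ref{lem:K-stab} gives that $\bar f$ admits a kernel pair, realised for $\sigma$ large by $\bar\pi_1\bar\iota_\sigma,\bar\pi_2\bar\iota_\sigma \from K_\sigma(f)\rightrightarrows X$. Next, Lemma~\ref{lem:KQ-stab} promotes stability of $K_*(f)$ to stability of $Q_*(f)$ in $\barBorn$. Since $\bar f$ is assumed epic, Proposition~\ref{prop:reg-coarse} then yields that $\bar f$ is a regular epi in $\barBorn$. A regular epi admitting a kernel pair is effective by Lemma~\ref{lem:reg-kernel}, so $\bar f$ is an effective epimorphism, as required.

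A slightly more concrete route to the same conclusion, which I would mention, is this: once $\bar f$ is known to be a regular epi, Lemma~\ref{lem:regular-coarse}(3) tells us that $K_\sigma(f)\rightrightarrows X\xrightarrow{\bar f} Y$ is a coequaliser diagram for all large $\sigma$; combining this with Lemma~\ref{lem:K-stab} and choosing a single $\sigma$ large enough for both statements exhibits $\bar f$ as the coequaliser of its own kernel pair, which is the definition of an effective epi. There is no substantial obstacle here; the only point needing a little care is the bookkeeping of thresholds, namely picking one $\sigma$ for which $K_\sigma(f)\rightrightarrows X$ is simultaneously the kernel pair of $\bar f$ and for which $\bar f_\sigma$ is an isomorphism, so that the kernel-pair coequaliser diagram and the regular-epi coequaliser diagram genuinely coincide. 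The hypothesis that $\bar f$ is an epi (i.e. that $f$ is coarsely surjective) enters exactly at the step where Proposition~\ref{prop:reg-coarse} converts stability of $Q_*(f)$ into regularity of $\bar f$: without coarse surjectivity one would only obtain that $\bar f_\sigma \from Q_\sigma(f)\to Y$ is a coarse embedding, not a coarse equivalence, so the coequaliser diagram would fail.
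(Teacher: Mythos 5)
Your proof is correct and follows essentially the same route as the paper: the forward direction via Lemma~\ref{lem:K-stab}, and the converse by combining Lemmas~\ref{lem:K-stab} and~\ref{lem:KQ-stab} with Proposition~\ref{prop:reg-coarse} and Lemma~\ref{lem:reg-kernel}. The additional remarks on threshold bookkeeping and the role of coarse surjectivity are accurate but not needed beyond what the cited results already provide.
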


  \proof
  The forwards implication follows from Lemma \ref{lem:K-stab}. Conversely, assume that $\bar f$ is an epimorphism and that $K_*(f)$ stabilises. Then $Q_*(f)$ stabilises in $\barBorn$ by Lemma \ref{lem:KQ-stab}, hence $\bar f$ is a regular epimorphism in $\barBorn$ by Proposition \ref{prop:reg-coarse}. By Lemma \ref{lem:K-stab}, $\bar f$ admits a kernel pair and so it must be an effective epimorphism in $\barBorn$ by Lemma \ref{lem:reg-kernel}.
  \endproof

  \begin{example}[Coarse quotients]\label{ex:zhang}
  A coarsely surjective controlled map $f \from X \to Y$ is a \emph{coarse quotient} (in the sense of Zhang \cite{Zha15}) with constant $R \geq 0$ if for all $\epsilon \geq 0$, there exists $\delta \geq 0$ such that $N_\epsilon(fx) \subseteq N_R f(N_\delta(x))$ for all $x \in X$. For such a map $f$, suppose $(x,x') \in K_\epsilon(f)$ for some $\epsilon \geq 0$. Then $fx' \in N_\epsilon(fx)$, and so there exists $w \in N_\delta(x)$ such that $d_Y(fw, fx') \leq R$. Since $(w,x') \in K_R(f)$, it follows that $(x,x') \in N_\delta K_R(f)$. Thus, $K_*(f)$ stabilises, hence $\bar f$ is an effective epi in $\barBorn$.
  \end{example}

  \section{Epimorphisms in the coarsely Lipschitz category}

  \subsection{Regular, strong, and extremal epis}

  We now work in the category $\barCL$. Our goal is to prove that the extremal epis are regular epis, and to characterise these in terms of a relative maximality property on the target metric. Define an ordering on the set of metrics on a set $Y$ as follows: declare $d \prec d'$ if and only if the underlying identity $(Y,d') \to (Y,d)$ is coarsely Lipschitz. This is analogous to the ordering of compatible left-invariant metrics on a topological group in the sense of Rosendal \cite{Ros22}.

  Our strategy is to adapt the weighted Rips graph construction from \cite{Tang-rips} by adding augmented edges. Let $f \from X \to Y$ be a coarsely Lipschitz map and assume that $N_r f(X) = Y$ for some $r \geq 0$.     Let $\Theta \from [0,\infty) \to [1,\infty)$ be an increasing function,    called a \emph{weight function}. Define a weighted graph $\Rips^\Theta_\infty(Y;f)$ with vertex set $Y$, and with edges coming in two types:
  \begin{itemize}
   \item \emph{internal edges} -- for each distinct pair $y,y' \in Y$ such that $d_Y(y,y') < \infty$, add a edge of weight $\Theta({d_Y(y,y')})$,
   \item \emph{augmented edges} -- for each distinct pair $y,y' \in f(Y)$ such that $d_Y(y,y') < \infty$, add an edge of weight $d_X(f^{-1}(y), f^{-1}(y')) + 1$.
  \end{itemize}

  \textbf{Augmented weighted Rips filtration.}
  For $\sigma \geq 0$, define $\Rips_\sigma^\Theta(Y;f)$ to be the subgraph of $\Rips_\infty^\Theta(Y;f)$ with vertex set $Y$, and whose edges comprise all augmented edges together with those internal edges between distinct pairs $y,y' \in Y$ satisfying $d_Y(y,y') \leq \sigma$. These assemble via the inclusions $\Rips_\sigma^\Theta (Y;f) \hookrightarrow \Rips_\tau^\Theta(Y;f)$ for $\sigma \leq\tau$ to form a filtration $\Rips_*^\Theta(Y;f)$.

  We shall equip $\Rips_\infty^\Theta(Y;f)$ (resp.~$\Rips_\sigma^\Theta (Y;f)$) with the path metric. Let $Y_\infty^\Theta = (Y, \partial_\infty^\Theta)$ (resp. $Y_\sigma^\Theta = (Y, \partial_\sigma^\Theta)$) denote its vertex set endowed with the induced metric. Note that the inclusions $\Rips_\sigma^\Theta (Y;f) \hookrightarrow \Rips_\tau^\Theta(Y;f) \hookrightarrow \Rips_\infty^\Theta(Y;f)$ are 1--Lipschitz, thus $\partial_\infty \prec \partial^{\tau} \prec \partial^\sigma$.               Let $\Gamma_\sigma^\Theta$ be the complete subgraph of $\Rips_\sigma^\Theta(Y; f)$ spanned by $f(X) \subseteq Y$, equipped with its path metric, and $U_\sigma^\Theta$ be its vertex set with the induced metric.

  Consider the factorisation $X \xrightarrow{q_\sigma} Q_\sigma(f) \xrightarrow{f_\sigma} Y$ of $f$ for $\sigma \geq 0$. The map $f_\sigma$ itself factorises as
  \[Q_\sigma(f) \twoheadrightarrow U_\sigma^\Theta \hookrightarrow Y_\sigma^\Theta \to Y^\Theta_\infty \to Y,\]
  where the first arrow coincides with $f$ onto its image, and the others with the underlying inclusion or identity maps. We will give sufficient conditions for each arrow to represent a mono in $\barCL$ for $\sigma$ large in terms of the weight function $\Theta$. (For concreteness, consider $\Theta(t) = 2^t$ or $\Theta(t) = 1$.) Extremality of $\bar f$ will then imply that $\bar f_\sigma$ is an isomorphism, from which regularity would follow using Lemma \ref{lem:regular-coarse}.

  \begin{lemma}\label{lem:ext-qi}
   For all $\sigma \geq 0$, the map $Q_\sigma(f) \twoheadrightarrow U^\Theta_\sigma$ induced by $f$ is a quasi-isometry.   \end{lemma}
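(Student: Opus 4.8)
The plan is to prove directly that the underlying surjection $h \from Q_\sigma(f) \to U_\sigma^\Theta$, $x \mapsto fx$, admits an affine upper control and an affine lower control; since $h$ is onto $f(X)$ it is coarsely surjective, so together these three properties make $h$ a quasi-isometry. Both controls will be obtained by transporting paths back and forth between the two weighted graphs — $\Coeq(\pi_1\iota_\sigma,\pi_2\iota_\sigma)$, whose path metric is $d_{Q_\sigma(f)}$, and $\Gamma_\sigma^\Theta$, whose path metric is $d_{U_\sigma^\Theta}$ — so the first step is to set up the edge dictionary. A glued edge of $Q_\sigma(f)$ joining $x,x'$ (present exactly when $d_Y(fx,fx')\leq\sigma$, of weight $1$) is sent by $h$ to the pair $fx,fx'$, which is joined in $\Gamma_\sigma^\Theta$ by an internal edge of weight $\Theta(d_Y(fx,fx'))\leq\Theta(\sigma)$; an internal edge of $Q_\sigma(f)$ joining $x,x'$ of weight $d_X(x,x')$ is sent to a pair joined in $\Gamma_\sigma^\Theta$ by an augmented edge of weight $d_X(f^{-1}(fx),f^{-1}(fx'))+1\leq d_X(x,x')+1$ (where $d_X(f^{-1}(y),f^{-1}(y'))$ denotes the distance between fibres). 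In the other direction, an internal edge of $\Gamma_\sigma^\Theta$ joining $y,y'$ is witnessed by a glued edge of $Q_\sigma(f)$ of weight $1$ between any chosen preimages of $y,y'$, and an augmented edge of finite weight $c$ joining $y,y'$ is witnessed by an internal edge of $Q_\sigma(f)$ of weight at most $c-1+\delta$ between preimages realising the fibre distance to within $\delta$.

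For the upper control, I would take a path in $Q_\sigma(f)$ from $x$ to $x'$ of length at most $d_{Q_\sigma(f)}(x,x')+\epsilon$ and first normalise it: using the triangle inequality in $(X,d_X)$, any two consecutive internal edges can be merged into one internal edge of no larger weight, so one may assume no two internal edges are adjacent. Because the glued edges each have weight $1$, their number is at most the path length, whence so is the number of internal edges up to an additive $1$. Transporting each edge by the dictionary and summing weights, the image path in $\Gamma_\sigma^\Theta$ from $fx$ to $fx'$ then has length at most $(\Theta(\sigma)+2)\,d_{Q_\sigma(f)}(x,x') + O(\epsilon) + 1$; letting $\epsilon\to 0$ yields an affine upper control with slope controlled by $\Theta(\sigma)$.

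For the lower control, I would take a path $fx = y_0,\dots,y_m = fx'$ in $\Gamma_\sigma^\Theta$ of length at most $d_{U_\sigma^\Theta}(fx,fx')+\epsilon$ along edges of finite weight. Every edge of $\Gamma_\sigma^\Theta$ has weight at least $1$ (internal ones because $\Theta\geq 1$, augmented ones because their weight is a distance plus $1$), so $m$ is at most the path length. Choosing preimages in $X$ for each edge as in the dictionary — with $x$ as the preimage of $y_0$ and $x'$ that of $y_m$ — the transported edges do not quite concatenate, since consecutive ones may use different preimages of the shared vertex $y_j$; but any two preimages of $y_j$ share an image and so are joined by a glued edge of $Q_\sigma(f)$ of weight $1$, and similarly for the two ends. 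Splicing in these at most $m+1$ bridging edges produces a path in $Q_\sigma(f)$ from $x$ to $x'$ of length at most $3\,d_{U_\sigma^\Theta}(fx,fx') + O(\epsilon) + 1$; letting $\epsilon\to 0$ gives an affine lower control, and the degenerate case $fx=fx'$ is handled by a single glued edge. Both directions also show that finiteness of distance is preserved, which supplies the $\infty$-conventions in the definitions of the controls.

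The step I expect to be the main obstacle is keeping the two controls \emph{affine} rather than merely proper. Transporting one edge inflates its weight only additively — $w\mapsto w+1$ for an internal edge crossing to an augmented one, and $1\mapsto\Theta(\sigma)$ for a glued edge — but naively iterating this over a path of $N$ edges would introduce a multiplicative factor of $N$, which is not controlled by the path length on its own. The remedy is precisely the edge-counting in each direction: amalgamating consecutive internal edges, and exploiting the uniform lower bound $1$ on the weights of the edges being enlarged (glued edges of $Q_\sigma(f)$ on one side, all edges of $\Gamma_\sigma^\Theta$ on the other), forces the number of edges to be at most the path length, so these additive losses sum to only a linear overhead. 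Verifying these uniform lower bounds and the amalgamation step carefully is where the real content lies.
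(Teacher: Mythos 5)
Your proposal is correct and follows essentially the same route as the paper's proof: both directions transport edge-paths between the two weighted graphs via the same edge dictionary (glued $\leftrightarrow$ internal of weight $\le\Theta(\sigma)$, internal $\leftrightarrow$ augmented with additive loss $1$), control the edge count by amalgamating consecutive internal edges on one side and using the uniform weight lower bound $1$ on the other, and splice weight-$1$ glued edges between differing preimages of a shared vertex. The constants differ slightly from the paper's $(\Theta(\sigma)+1)$-Lipschitz upper bound and $t\mapsto\frac{t-1}{2}$ lower control, but the argument is the same.
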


  \proof
  Let us regard $Q_\sigma(f)$ as the vertex set of a weighted graph $\Gamma$ with \emph{internal} and \emph{glued} edges (as in the coarse gluing construction). Let $x,x' \in X$ and $y = fx, y'=fx'$. To obtain an upper control, suppose $d_{Q_\sigma(f)}(x,x') < \infty$. Let $P$ be an edge-path in $\Gamma$ from $x$ to $x'$ of length $L \geq 0$. We may assume that all vertices $x = x_0, \ldots, x_n = x$ appearing along $P$ are distinct and that $P$ has no consecutive pair of internal edges. Thus $n \leq L$ since glued edges have unit length. Let $y_i = fx_i$. If the edge of $P$ between $x_i,x_{i+1}$ is
  \begin{itemize}
   \item a glued edge then, by definition of $Q_\sigma(f)$, we have that $d_Y(y_i, y_{i+1}) \leq \sigma$, hence $y_i, y_{i+1}$ are connected by an internal edge of weight at most $\Theta(\sigma)$ in $\Gamma^\Theta_\sigma$,
   \item an internal edge then $y_i, y_{i+1}$ are connected by an augmented edge of weight at most $d_X(f^{-1}(y_i), f^{-1}(y_{i+1})) + 1 \leq d_X(x_i,x_{i+1}) + 1$ in $\Gamma^\Theta_\sigma$.
  \end{itemize}
   Therefore, there exists an edge-path from $y$ to $y'$ in $\Gamma^\Theta_\sigma$ of length at most
  \[\sum_{i=0}^{n-1} \left(d_X(x_i,x_{i+1}) + \Theta(\sigma)\right) = L +  \Theta(\sigma) n \leq (\Theta(\sigma) + 1)L. \]   By considering all paths from $x$ to $x'$, we deduce that $Q_\sigma(f) \to U^\Theta_\sigma$ is $(\Theta(\sigma) + 1)$--Lipschitz.

  For the lower control, suppose that   $P'$ is an edge-path from $y$ to $y'$ in $\Gamma^\Theta_\sigma$ of length $L \geq 0$. We may assume all vertices $y = y_0,\ldots, y_n = y$ along $P'$ are distinct. Since all edges of $\Gamma^\Theta_\sigma$ have weight at least 1, we deduce $n \leq L$.     Note that $f^{-1}(y_i)$ is non-empty for all $i$. If the edge of $P'$ between $y_i, y_{i+1}$ forms
  \begin{itemize}
   \item an augmented edge then there exist $x'_i \in f^{-1}y_i$ and $x_{i+1} \in f^{-1}y_{i+1}$ such that
   \[d_X(x'_i,x_{i+1}) \leq d_X(f^{-1}(y_i), f^{-1}(y_{i+1})) + 1,\]
   \item an internal edge then $d_Y(y_i, y_{i+1}) \leq \sigma$ by construction; in particular, every pair of distinct points in $f^{-1}y_i \cup f^{-1}y_{i+1}$ are connected by a glued edge in $Q_\sigma(f)$.
  \end{itemize}
   In either case, if this edge between $y_i, y_{i+1}$ has weight $w$ then there exists vertices $x'_i \in f^{-1}y_i$, $x_{i+1} \in f^{-1}y_{i+1}$ connected by an edge of weight at most $w$ in $\Gamma$. Therefore, there is a path in $\Gamma$ passing through vertices $x, x'_0, x_1, \ldots, x'_{n-1}, x_n, x'$ of length at most $L + n + 1 \leq 2L + 1$. Consequently, $Q_\sigma(f) \to Y^\Theta_\sigma$ has lower control $t \mapsto \frac{t - 1}{2}$.
  \endproof

  For the next arrow, we ask that $\Theta$ grows at most exponentially.

  \begin{lemma}\label{lem:image-qi}
        Assume there exists $C \geq 1$ such that $\Theta(t + 2r) \leq C\Theta(t)$ for all $t \geq 0$. Then for all $\sigma \geq r$, the inclusion $\iota \from U_\sigma^\Theta \hookrightarrow Y_\sigma^\Theta$ is a quasi-isometry.
   \end{lemma}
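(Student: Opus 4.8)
The plan is to verify separately the three defining features of a quasi-isometry for $\iota$: coarse surjectivity, an affine upper control, and an affine lower control.

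\textbf{Coarse surjectivity and the upper control} are immediate. Since $N_r f(X) = Y$ and $r \le \sigma$, every $y \in Y$ has some $x \in X$ with $d_Y(y, fx) \le r \le \sigma$, so $y$ and $fx$ are joined by an internal edge of weight $\Theta(d_Y(y,fx)) \le \Theta(r)$ in $\Rips^\Theta_\sigma(Y;f)$; hence $\partial^\Theta_\sigma(y, fx) \le \Theta(r)$, and $f(X)$ is $\Theta(r)$--dense in $Y^\Theta_\sigma$. For the upper control, $\Gamma^\Theta_\sigma$ is a subgraph of $\Rips^\Theta_\sigma(Y;f)$ carried by the vertex subset $f(X)$, so every edge-path in $\Gamma^\Theta_\sigma$ is one in $\Rips^\Theta_\sigma(Y;f)$ of the same length; thus $\partial^\Theta_\sigma(z,z') \le d_{U^\Theta_\sigma}(z,z')$ for all $z,z' \in f(X)$, i.e.\ $\iota$ is $1$--Lipschitz.

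\textbf{The affine lower control}, a bound $d_{U^\Theta_\sigma}(z,z') \le A\,\partial^\Theta_\sigma(z,z') + B$ for $z,z'\in f(X)$, is the substantive step, and I would obtain it by pushing paths into $f(X)$. Fix $z,z'\in f(X)$ and an edge-path $P$ in $\Rips^\Theta_\sigma(Y;f)$ from $z$ to $z'$ through vertices $z = y_0, y_1,\dots,y_n = z'$ of total length $L$; it suffices to build from $P$ an edge-path in $\Gamma^\Theta_\sigma$ of length at most $C_0 L$ for a constant $C_0 = C_0(C,\Theta,\sigma)$. For each $i$ pick $z_i \in f(X)$ with $z_i = y_i$ when $y_i \in f(X)$ and $d_Y(y_i, z_i)\le r$ otherwise, so $z_0 = z$ and $z_n = z'$. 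Now process the edges of $P$ one at a time. An augmented edge of $P$ has both endpoints already in $f(X)$, so it survives verbatim into $\Gamma^\Theta_\sigma$ and contributes the same weight. For an internal edge of $P$ between $y_i$ and $y_{i+1}$ one has $d_Y(y_i, y_{i+1}) \le \sigma$, hence $d_Y(z_i, z_{i+1}) \le d_Y(y_i, y_{i+1}) + 2r$, and the crucial inequality
\[ \Theta\bigl(d_Y(z_i, z_{i+1})\bigr) \;\le\; \Theta\bigl(d_Y(y_i, y_{i+1}) + 2r\bigr) \;\le\; C\,\Theta\bigl(d_Y(y_i, y_{i+1})\bigr) \]
— exactly the hypothesised growth condition — lets us replace this edge, of weight $\Theta(d_Y(y_i, y_{i+1}))$, by a $\Gamma^\Theta_\sigma$-path between $z_i$ and $z_{i+1}$ of weight at most a fixed multiple of it (provided $z_i, z_{i+1}$ are suitably joined inside $\Gamma^\Theta_\sigma$; see below). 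Summing over the edges of $P$ produces a $\Gamma^\Theta_\sigma$-path from $z$ to $z'$ of length at most $C_0 L$, and infimising over $P$ gives $d_{U^\Theta_\sigma}(z,z') \le C_0\,\partial^\Theta_\sigma(z,z')$, a linear and hence affine lower control. Combined with the first paragraph, this exhibits $\iota$ as a quasi-isometry.

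\textbf{The main obstacle} I anticipate is the parenthetical above: after pushing, an internal edge $(y_i, y_{i+1})$ of $P$ only yields $z_i, z_{i+1} \in f(X)$ with $d_Y(z_i, z_{i+1}) \le \sigma + 2r$, which may be larger than $\sigma$, so there need not be a single internal edge of $\Gamma^\Theta_\sigma$ joining $z_i$ to $z_{i+1}$. One must show that such a pair is nevertheless joined in $\Gamma^\Theta_\sigma$ by a path whose length is bounded in terms of $\sigma$ and $r$ only — equivalently, that the excursions of $P$ through $Y \setminus f(X)$ do not furnish shortcuts unavailable within $f(X)$. The natural device is to treat each maximal sub-arc of $P$ lying outside $f(X)$ as a unit — a $\sigma$-chain in $Y$ whose endpoints lie in $f(X)$ — and to shadow it by a $\sigma$-chain in $f(X)$ using the $r$--density of $f(X)$ together with $\sigma \ge r$, the edge-count of the excursion (which bounds its weight from below, since $\Theta \ge 1$) paying for the length of its shadow. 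This shadowing step, where both hypotheses $\sigma \ge r$ and $\Theta(t+2r) \le C\Theta(t)$ are genuinely used, is where I expect the real work to lie.
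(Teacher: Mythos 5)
Your strategy coincides with the paper's: coarse surjectivity of $\iota$ from the $r$--density of $f(X)$ together with $\sigma \geq r$; the upper control from $\Gamma^\Theta_\sigma$ being a subgraph of $\Rips^\Theta_\sigma(Y;f)$; and the lower control by pushing a path back into $f(X)$ via a retraction $\phi \from Y \to f(X)$ that moves each point at most $r$ and fixes $f(X)$, checked edge-by-edge, with augmented edges surviving verbatim and internal edges handled by exactly your displayed inequality $\Theta(d_Y(\phi y,\phi y')) \leq \Theta(d_Y(y,y')+2r) \leq C\,\Theta(d_Y(y,y'))$. Up to that point you have reproduced the published argument.

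The ``main obstacle'' you flag is, however, genuine, and it is precisely the step that the paper's own proof passes over in silence: the paper asserts that $\phi y, \phi y'$ are joined by an internal edge of weight $\Theta(d_Y(\phi y,\phi y'))$ \emph{in $\Gamma^\Theta_\sigma$}, but such an edge exists only when $d_Y(\phi y,\phi y') \leq \sigma$, whereas the displacement estimate yields only $d_Y(\phi y,\phi y') \leq \sigma + 2r$. Your proposed shadowing device does not close this gap --- replacing a maximal excursion through $Y \setminus f(X)$ by the chain of shadows $\phi y_i$ reproduces the same difficulty, since consecutive shadows are again only at distance at most $\sigma + 2r$ --- and in fact no argument can close it for the statement as literally given. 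For a concrete obstruction, let $Y$ be a disjoint union of triples $\{p_n,q_n,t_n\}$ with $d_Y(p_n,q_n)=r$, $d_Y(q_n,t_n)=\sigma$, $d_Y(p_n,t_n)=\sigma+r$, let $f(X)=\{p_n,t_n : n\in\N\}$ with the fibres over $p_n$ and $t_n$ at $X$--distance $n$, and put all inter-triple distances equal to $\infty$: then $\partial^\Theta_\sigma(p_n,t_n) \leq \Theta(r)+\Theta(\sigma)$ via the two internal edges through $q_n$, while the only edge of $\Gamma^\Theta_\sigma$ joining $p_n$ to $t_n$ is the augmented one, of weight $n+1$, so $\iota$ admits no affine lower control. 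What the edge-pushing argument honestly proves is that $\phi$ is a $C$--Lipschitz map $Y^\Theta_\sigma \to U^\Theta_{\sigma+2r}$, since the displaced internal edge does lie in $\Gamma^\Theta_{\sigma+2r}$ with weight at most $Cw$ by the growth hypothesis on $\Theta$; the lemma should be repaired by this index shift, which is harmless in its downstream applications (where $\sigma$ is taken large and the relevant filtrations stabilise) but is not available verbatim at a fixed $\sigma$. So your suspicion was well placed, and you should not expect to extract the missing step from the paper, because it is missing there as well.
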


   \proof
   For each $y \in Y$, choose some $\phi(y) \in f(X)$ such that $d_Y(y, \phi y) \leq r$; we shall also assume that $\phi u = u$ for all $u \in f(X)$.      We claim that $\phi \from Y_\sigma^\Theta \to U_\sigma^\Theta$ is $C$--Lipschitz. Since the involved metrics are defined using path metrics, it suffices to check this on edges. Observe that any augmented edge in $\Rips_\sigma^\Theta(Y; f)$ also belongs to the subgraph $\Gamma_\sigma^\Theta$. Now suppose $y,y' \in Y$ are connected by an internal edge of weight $w$. Then $d_Y(\phi y, \phi y') \leq d_Y(y,y') + 2r < \infty$, hence $\phi y, \phi y'$ are connected by an internal edge of weight
   \[\Theta(d_Y(\phi y, \phi y')) \leq \Theta(d_Y(y,y') + 2r) \leq C \Theta(d_Y(y,y')) = Cw\]
   in $\Gamma_\sigma^\Theta$, yielding the claim. Now, $\iota$ is 1--Lipschitz and $\phi \iota$ is the identity on $U_\sigma^\Theta$, hence $\iota$ is a bi-Lipschitz embedding. Since $N_r f(X) = Y$ and $\sigma \geq r$, each $y \in Y$ is connected to some $u \in f(X)$ be an internal edge of weight at most $\Theta(r)$ in $\Gamma_\sigma^\Theta$, and so $\iota$ is coarsely surjective.
   \endproof

  For the remaining arrows, we require lower bounds on the growth of $\Theta$.

  \begin{lemma}\label{lem:lower}
   Assume $t \leq \Theta(t)$ for all $t \geq 0$. Then the map $Y^\Theta_\infty \to Y$ induced by the identity is coarsely Lipschitz and admits a lower control.
  \end{lemma}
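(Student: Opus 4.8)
The plan is to verify the two assertions by direct estimates on the path metric $\partial_\infty^\Theta$, working with a fixed affine upper control for $f$: since $f$ is coarsely Lipschitz we may write $d_Y(fa,fb)\le\lambda\,d_X(a,b)+c$ for all $a,b\in X$ with $d_X(a,b)<\infty$, where we are free to take $\lambda\ge 1$ and $c\ge 0$.

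For the (affine) upper control, I would first observe that every edge of $\Rips_\infty^\Theta(Y;f)$ has weight at least $1$ — internal edges because $\Theta$ takes values in $[1,\infty)$, augmented edges because their weights have the shape $d_X(\,\cdot\,,\,\cdot\,)+1$ — so an edge-path of length $L$ traverses at most $L$ edges. I would then bound the $d_Y$-distance across a single edge from $y$ to $y'$ of weight $w$: if the edge is internal then $d_Y(y,y')\le\Theta(d_Y(y,y'))=w$ by the hypothesis $t\le\Theta(t)$; if it is augmented then $y,y'\in f(X)$, and since only finite-weight edges can occur in a finite-length path we may pick $a\in f^{-1}(y)$, $b\in f^{-1}(y')$ with $d_X(a,b)\le d_X(f^{-1}(y),f^{-1}(y'))+1=w<\infty$, so $d_Y(y,y')=d_Y(fa,fb)\le\lambda w+c$. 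In either case $d_Y(y,y')\le\lambda w+c$. Summing along a path $y=y_0,\dots,y_n=y'$ of length $L$ and using $n\le L$ gives $d_Y(y,y')\le\lambda L+cn\le(\lambda+c)L$; taking the infimum over all such paths yields $d_Y(y,y')\le(\lambda+c)\,\partial_\infty^\Theta(y,y')$ (vacuous when $\partial_\infty^\Theta(y,y')=\infty$). Thus $t\mapsto(\lambda+c)t$ is an affine upper control for the identity map $Y_\infty^\Theta\to Y$, so it is coarsely Lipschitz.

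For the lower control, I would invoke the equivalence between the existence of a lower control and uniform metric properness recalled above. Given $R\ge 0$, if $d_Y(y,y')\le R$ then — trivially if $y=y'$, and otherwise via the single internal edge joining $y$ to $y'$ — one has $\partial_\infty^\Theta(y,y')\le\Theta(d_Y(y,y'))\le\Theta(R)$, since $\Theta$ is increasing. Hence $S=\Theta(R)$ witnesses that $Y_\infty^\Theta\to Y$ is uniformly metrically proper, so it admits a lower control; explicitly, one may take $\theta(s)=\sup\{t\ge 0:\Theta(t)<s\}$, which is proper and increasing.

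No serious obstacle arises. The only point requiring a little care is the treatment of augmented edges in the upper-control estimate: there one must combine the affine upper control of $f$ with the extra $+1$ built into the augmented-edge weight to produce the points $a,b$ at finite $d_X$-distance, and one should note that augmented edges of infinite weight cannot appear in a finite-length path in this extended-metric setting, so they do not interfere.
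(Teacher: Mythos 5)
Your proof is correct and follows essentially the same route as the paper: bound $d_Y$ across each edge type (internal edges via $t\le\Theta(t)$, augmented edges via the affine upper control of $f$), sum over paths, and obtain the lower control from the internal-edge bound $\partial_\infty^\Theta(y,y')\le\Theta(d_Y(y,y'))$ together with uniform metric properness. If anything, your summation step (using $n\le L$ to absorb the additive constants and produce the affine bound $(\lambda+c)L$) is spelled out more carefully than the paper's terse assertion that $t\mapsto\rho(t)+t$ is an upper control.
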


  \proof
  Let $\rho$ be an (affine) upper control for $f$. Suppose that $y,y' \in Y$ are connected by an edge of weight $w$ in $\Rips^\Theta_\infty(Y;f)$. If this is an internal edge then $d_Y(y,y') \leq \Theta(d_Y(y,y')) = w$;   otherwise, this is an augmented edge, and so there exist $x \in f^{-1}(y)$ and $x' \in f^{-1}(y')$ such that $d_X(x,x') \leq d_X(f^{-1}y, f^{-1}y') + 1 = w$, hence
  \[d_Y(y,y') \leq \rho(d_X(x,x')) \leq \rho(w).\] Therefore, $t \mapsto \rho(t) + t$ serves as an upper control for $Y'_\infty \to Y$.

  If $d_Y(y,y') < \infty$ then $\partial_\infty(y,y') \leq \Theta(d_Y(y,y'))$ since there exists an internal edge between $y,y'$. Moreover, $\partial_\infty(y,y') = \infty$ only if $d_Y(y,y') = \infty$ by construction. Since $\Theta$ is proper, we obtain a lower control.  
  \endproof

  \begin{lemma}\label{lem:ext-stab}
   Assume that for every strictly increasing affine function $\theta \from [0,\infty) \to \R$, there exists $\sigma \geq 0$ such that $t < \theta(\Theta(t) - 1)$ for all $t \geq \sigma$. If $\bar f \from X \to Y$ is an extremal epi in $\barCL$ then for $\sigma \geq 0$ large, the underlying identity induces an isometry $Y^\Theta_\sigma \to Y^\Theta_\infty$.
  \end{lemma}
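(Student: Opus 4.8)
The plan is to prove the stronger statement that $\partial_\sigma^\Theta = \partial_\infty^\Theta$ as metrics on $Y$ for all sufficiently large $\sigma$; this immediately makes the underlying identity $Y_\sigma^\Theta \to Y_\infty^\Theta$ an isometry. Since $\Rips_\sigma^\Theta(Y;f)$ is a subgraph of $\Rips_\infty^\Theta(Y;f)$, one always has $\partial_\infty^\Theta \leq \partial_\sigma^\Theta$, and the only edges of $\Rips_\infty^\Theta(Y;f)$ missing from $\Rips_\sigma^\Theta(Y;f)$ are the \emph{long internal edges}, i.e.\ the internal edges $\{y,y'\}$ with $\sigma < d_Y(y,y') < \infty$, each of weight $\Theta(d_Y(y,y'))$. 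So it suffices to show that for $\sigma$ large every long internal edge $\{y,y'\}$ is \emph{redundant}, in the sense that $\partial_\sigma^\Theta(y,y') \leq \Theta(d_Y(y,y'))$: granting this, one may replace every long internal edge occurring in an arbitrary edge-path of $\Rips_\infty^\Theta(Y;f)$ by an $\Rips_\sigma^\Theta(Y;f)$-path of no greater length, obtaining $\partial_\sigma^\Theta \leq \partial_\infty^\Theta$ and hence equality.

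The sole input from extremality is an affine comparison between $\partial_\infty^\Theta$ and $d_Y$. The map $f$ factors in $\barCL$ as $X \to Y_\infty^\Theta \xrightarrow{\bar m} Y$, where the first arrow agrees with $f$ on underlying sets and is coarsely Lipschitz (control $t \mapsto t+1$, since distinct points of $f(X)$ are joined by augmented edges) and $\bar m$ is the underlying identity. By Lemma \ref{lem:lower}, $\bar m$ lies in $\barCL$ and admits a lower control, so it is a monomorphism by Proposition \ref{prop:morphisms}. As $\bar f$ is an extremal epi, $\bar m$ must be an isomorphism of $\barCL$, that is, a quasi-isometry; choosing an affine lower control for it gives constants $A \geq 1$ and $B \geq 0$ with $\partial_\infty^\Theta(y,y') \leq A\, d_Y(y,y') + B$ whenever $d_Y(y,y') < \infty$.

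Now apply the growth hypothesis to the strictly increasing affine function $\theta(s) = (s-B)/A$ to obtain $\sigma^* \geq 0$ with $A t + B + 1 < \Theta(t)$ for all $t \geq \sigma^*$; I claim every $\sigma \geq \sigma^*$ works. If not, let $D^*$ be the infimum of those $D > \sigma$ for which there exist $y,y'$ with $d_Y(y,y') = D$ and $\partial_\sigma^\Theta(y,y') > \Theta(D)$ (such $D$ are finite, as no long internal edge joins points at infinite distance), and assume for contradiction that $D^* < \infty$; note $D^* \geq \sigma \geq \sigma^*$, and every long internal edge of $d_Y$-span in $(\sigma, D^*)$ is already known redundant. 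Fix small $\epsilon, \eta > 0$ subject to $A\epsilon + \eta \leq \Theta(D^*) - (AD^* + B)$, whose right side exceeds $1$ by the choice of $\sigma^*$. Using the definition of $D^*$, pick $y,y'$ with $D := d_Y(y,y') \in [D^*, D^* + \epsilon)$ and $\partial_\sigma^\Theta(y,y') > \Theta(D)$, together with an edge-path $P$ from $y$ to $y'$ in $\Rips_\infty^\Theta(Y;f)$ of length $\ell(P) < \partial_\infty^\Theta(y,y') + \eta \leq AD + B + \eta < A(D^* + \epsilon) + B + \eta \leq \Theta(D^*)$. Each long internal edge of $P$ has weight at most $\ell(P) < \Theta(D^*)$, so since $\Theta$ is non-decreasing its $d_Y$-span $s$ satisfies $\sigma < s < D^*$; by redundancy at span $s$, the endpoints of that edge lie at $\partial_\sigma^\Theta$-distance $\leq \Theta(s)$, which is exactly its weight. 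Replacing every long internal edge of $P$ in this manner, and keeping the augmented and short internal edges (which already lie in $\Rips_\sigma^\Theta(Y;f)$), produces an $\Rips_\sigma^\Theta(Y;f)$-walk from $y$ to $y'$ of length at most $\ell(P)$ plus an arbitrarily small slack, hence less than $AD + B + 1 < \Theta(D)$ since $D \geq \sigma^*$. This contradicts $\partial_\sigma^\Theta(y,y') > \Theta(D)$. Therefore $D^* = \infty$, every long internal edge is redundant, and $\partial_\sigma^\Theta = \partial_\infty^\Theta$.

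I expect the last step to be the main obstacle: the rerouting is a priori self-referential, since the path replacing one long internal edge may again require long internal edges. It resolves in a single pass precisely because $\Theta$ grows super-affinely while $\partial_\infty^\Theta$ exceeds $d_Y$ by at most an affine amount, so a near-geodesic of $\Rips_\infty^\Theta(Y;f)$ between points at $d_Y$-distance $D$ has total length comfortably below $\Theta(D)$, forcing each of its long internal edges to have strictly smaller $d_Y$-span; an infimum (equivalently, continuous-induction) argument on the span then closes the loop. The only further points requiring mild care are that $\Theta$ is assumed only non-decreasing, handled by keeping the relevant weight bounds strict, and that path-metric infima need not be attained, handled by the slack parameters $\epsilon,\eta$.
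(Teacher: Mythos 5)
Your proof is correct, and its first half is exactly the paper's: factor $\bar f$ through $Y^\Theta_\infty$, use extremality to get an affine lower control for $Y^\Theta_\infty \to Y$, and feed that affine function into the growth hypothesis on $\Theta$ to conclude that every internal edge of $d_Y$--span $t \geq \sigma$ has weight $\Theta(t)$ exceeding $\partial^\Theta_\infty$ of its endpoints by more than $1$. Where you diverge is in how this is converted into the isometry. You reroute long internal edges \emph{into} $\Rips^\Theta_\sigma(Y;f)$, which, as you note, is a priori circular, and you resolve the circularity by a continuous induction on the $d_Y$--span of the edge (taking the infimal span $D^*$ at which redundancy fails and deriving a contradiction from the bound $\ell(P) < \Theta(D^*)$). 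The paper avoids the circularity altogether: it takes a path $P$ in $\Rips^\Theta_\infty(Y;f)$ of length within $\tfrac13$ of $\partial^\Theta_\infty(y,y')$ and observes that $P$ cannot traverse any long internal edge, since replacing such an edge by a near-geodesic detour \emph{still inside} $\Rips^\Theta_\infty(Y;f)$ would shorten $P$ by more than $\tfrac12$, contradicting near-optimality. Hence every near-geodesic of $\Rips^\Theta_\infty(Y;f)$ already lies in $\Rips^\Theta_\sigma(Y;f)$, and the isometry follows with no induction on spans. Both arguments are sound; yours costs an extra bootstrapping layer (and the attendant $\epsilon,\eta$ bookkeeping and the non-decreasing-$\Theta$ monotonicity step $\Theta(s)<\Theta(D^*)\Rightarrow s<D^*$) that the shortcut-within-$\Rips^\Theta_\infty$ viewpoint renders unnecessary, but it does yield the slightly more explicit intermediate statement that $\partial^\Theta_\sigma(y,y') \leq \Theta(d_Y(y,y'))$ for all finite-distance pairs with $d_Y(y,y')>\sigma$.
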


  \proof
  By Lemma \ref{lem:lower}, the factorisation $X \to Y
  ^\Theta_\infty \to Y$ of $f$ yields an epi-mono factorisation of $\bar f$ in $\barCL$. Extremality of $\bar f$ implies that $Y^\Theta_\infty \to Y$ represents an isomorphism in $\barCL$ and is hence a quasi-isometry. Therefore, this map admits a (strictly increasing) affine lower control $\theta$. By assumption, there exists $\sigma \geq 0$ such that $t < \theta(\Theta(t) - 1)$ for all $t \geq \sigma$.

  Let $y,y' \in Y$ satisfy $\partial^\Theta_\infty(y,y') < \infty$. Consider a weighted edge-path $P$ in $\Rips_\infty^\Theta(Y;f)$ passing through vertices $y = y_0, \ldots, y_n = y'$ of length $L \in [\partial^\Theta_\infty(y,y'), \partial^\Theta_\infty(y,y') + \frac{1}{3})$. Note that each $d_Y(y_i,y_{i+1})$ is finite. If there exists some $0 \leq i \leq n$ such that $d_Y(y_i,y_{i+1}) \geq \sigma$ then
  \[\theta\partial_\infty^\Theta(y_i,y_{i+1}) \leq d_Y(y_i,y_{i+1}) < \theta\left(\Theta {d_Y(y_i,y_{i+1})} - 1\right),\]
  hence $\partial_\infty^\Theta(y_i,y_{i+1}) < \Theta {d_Y(y_i,y_{i+1})} - 1$.  Therefore, $P$ does not run over the internal edge between $y_i$ and $y_{i+1}$, for otherwise we could replace that edge with a detour in $\Rips_\infty^\Theta(Y;f)$ which reduces the length of $P$ by at least $\frac{1}{2}$, contradicting the choice of $P$. In other words, each edge appearing along $P$ must be an edge in the subgraph $\Rips_\sigma^\Theta(Y;f)$. Thus, for all $y,y'\in Y$, the distance $\partial_\infty(y,y')$ is attained by the infimal length of all weighted edge-paths from $y$ to $y'$ in $\Rips_\sigma^\Theta(Y;f)$. Consequently, $Y^\Theta_\sigma \to Y^\Theta_\infty$ is an isometry.
  \endproof
  
  We now characterise the extremal (and regular) epis in $\barCL$ in terms of relatively maximal metrics on the codomain. Say that $d_Y$ is \emph{maximal relative to} $f$ if condition (4) below holds.

  \begin{proposition}\label{prop:ext-reg}
  Let $\bar f \from (X,d_X) \to (Y,d_Y)$ be an epi in $\barCL$ and let $f$ be a representative. Then the following are equivalent.
  \begin{enumerate}
   \item $\bar f$ is an extremal epi in $\barCL$,
   \item $\bar f$ is a regular epi in $\barCL$,
   \item the metric $d_Y$ is $\prec$--greatest among all coarsely equivalent metrics $d'$ on $Y$ such that the map $f' \from X \to (Y,d')$ coinciding with $f$ is coarsely Lipschitz, and
   \item the metric $d_Y$ is $\prec$--maximal among all coarsely equivalent metrics $d'$ on $Y$ such that the map $f' \from X \to (Y,d')$ coinciding with $f$ is coarsely Lipschitz.
  \end{enumerate}
  \end{proposition}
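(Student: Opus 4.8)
The plan is to establish the three equivalences $(1)\Leftrightarrow(2)$, $(1)\Leftrightarrow(4)$ and $(3)\Leftrightarrow(4)$, which together yield the proposition. Call a metric $d'$ on $Y$ a \emph{competitor} if it is coarsely equivalent to $d_Y$ and the underlying-set map $f'\from X\to(Y,d')$ is coarsely Lipschitz. Of all the implications, only $(1)\Rightarrow(2)$ -- the statement that extremal epis are regular in $\barCL$ -- uses the augmented Rips machinery of Lemmas~\ref{lem:ext-qi}--\ref{lem:ext-stab}; the rest are elementary. Indeed $(2)\Rightarrow(1)$ is immediate from Proposition~\ref{prop:implications}, and $(3)\Rightarrow(4)$ holds because a $\prec$--greatest element is automatically $\prec$--maximal. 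For $(4)\Rightarrow(3)$, given a competitor $d'$ I would pass to $d'':=\max(d',d_Y)$: since $d_Y\le d''\le d_Y+d'$ it is sandwiched and hence coarsely equivalent to $d_Y$, and since a maximum of two affine functions is dominated by an affine function the map $X\to(Y,d'')$ is coarsely Lipschitz, so $d''$ is a competitor. As $d_Y\le d''$ gives $d_Y\prec d''$, maximality forces $d''\prec d_Y$, and then $d'\le d''$ yields $d'\prec d_Y$; since $d'$ was arbitrary, $d_Y$ is $\prec$--greatest.

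For $(1)\Leftrightarrow(4)$ the key is a dictionary between mono-factorisations of $\bar f$ in $\barCL$ and competitor metrics, built on Proposition~\ref{prop:morphisms} and Lemma~\ref{lem:coarse-qi}. For $(1)\Rightarrow(4)$: if $d'$ is a competitor with $d_Y\prec d'$, the underlying identity $\iota\from(Y,d')\to(Y,d_Y)$ is coarsely Lipschitz and, as $d'$ is coarsely equivalent to $d_Y$, also has a lower control, so $\bar\iota$ is a mono in $\barCL$; from $\iota f'=f$ on underlying sets we get $\bar f=\bar\iota\,\overline{f'}$, so extremality of $\bar f$ makes $\bar\iota$ an isomorphism, i.e.\ a quasi-isometry, whose affine lower control gives $d'\prec d_Y$. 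Conversely, for $(4)\Rightarrow(1)$: if $\bar f=\bar m\,\bar e$ with $\bar m\from Z\to Y$ a mono, then $\bar m$ is a left factor of the epi $\bar f$, hence an epi, hence a coarse equivalence; by Lemma~\ref{lem:coarse-qi} a representative $m$ factors as $Z\xrightarrow{m'}(Y,d')\xrightarrow{\iota}(Y,d_Y)$ with $m'$ a quasi-isometry and $\iota$ the underlying identity (a coarse equivalence). Then $\bar\iota=\bar m\,(\overline{m'})^{-1}$ is a morphism of $\barCL$, so $d_Y\prec d'$; cancelling the mono $\bar\iota$ from $\bar\iota\,\overline{f'}=\bar f=\bar\iota\,(\overline{m'}\,\bar e)$ shows $\overline{f'}=\overline{m'}\,\bar e$ is coarsely Lipschitz, so $d'$ is a competitor. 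Maximality now gives $d'\prec d_Y$, so $\iota$ (having affine upper and lower controls, and being surjective) is a quasi-isometry, hence so is $m=\iota m'$, and $\bar m$ is an isomorphism; thus $\bar f$ is extremal.

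Finally, for $(1)\Rightarrow(2)$ I would take the weight function $\Theta(t)=2^{t}$, which lies between ``at least linear'' and ``at most exponential'' and dominates every affine function, so that it meets the hypotheses of Lemmas~\ref{lem:ext-qi}, \ref{lem:image-qi}, \ref{lem:lower} and \ref{lem:ext-stab} simultaneously. Assuming $\bar f$ extremal: Lemma~\ref{lem:ext-stab} makes $Y^\Theta_\sigma\to Y^\Theta_\infty$ an isometry for $\sigma$ large, and Lemmas~\ref{lem:ext-qi} and \ref{lem:image-qi} make $Q_\sigma(f)\twoheadrightarrow U^\Theta_\sigma$ and $U^\Theta_\sigma\hookrightarrow Y^\Theta_\sigma$ quasi-isometries (for $\sigma\ge r$). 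Hence, in the factorisation $Q_\sigma(f)\twoheadrightarrow U^\Theta_\sigma\hookrightarrow Y^\Theta_\sigma\to Y^\Theta_\infty\to Y$ of $f_\sigma$, the composite $\psi_\sigma\from Q_\sigma(f)\to Y^\Theta_\infty$ of the first three arrows is a quasi-isometry, while by Lemma~\ref{lem:lower} the last arrow $j\from Y^\Theta_\infty\to Y$ is coarsely Lipschitz with a lower control, hence represents a mono in $\barCL$. The factorisation $\bar f=\bar j\,(\bar\psi_\sigma\,\bar q_\sigma)$ through the mono $\bar j$, together with extremality, forces $\bar j$ to be an isomorphism, so $\bar f_\sigma=\bar j\,\bar\psi_\sigma$ is an isomorphism in $\barCL$ for $\sigma$ large; by Lemma~\ref{lem:regular-coarse} this makes $\bar f$ a regular epi. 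I expect the main obstacle to sit entirely in this last step: checking that a single fixed weight function clears all four growth thresholds of the Rips lemmas and that the displayed arrows compose as claimed -- though once Lemmas~\ref{lem:ext-qi}--\ref{lem:ext-stab} are in hand this is bookkeeping, with the genuine work already packaged into those lemmas.
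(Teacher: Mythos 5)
Your proposal is correct, and the load-bearing steps --- $(1)\Rightarrow(2)$ via the augmented Rips filtration with $\Theta(t)=2^t$ and Lemmas~\ref{lem:ext-qi}--\ref{lem:ext-stab}, and the mono-factorisation argument through Lemma~\ref{lem:coarse-qi} --- coincide with the paper's. The global routing differs, though: the paper closes the single cycle $(1)\Rightarrow(2)\Rightarrow(3)\Rightarrow(4)\Rightarrow(1)$, obtaining the $\prec$--greatest condition $(3)$ from regularity $(2)$ by running an arbitrary competitor $d'$ through the universal property of the coequaliser $K_\sigma(f)\rightrightarrows X\to Y$ and cancelling against the epi $\bar f$. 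You instead prove $(1)\Leftrightarrow(4)$ directly (both directions of your competitor/mono-factorisation dictionary are sound; the forward direction is essentially the paper's $(4)\Rightarrow(1)$ read backwards) and then upgrade maximality to greatest-ness by the elementary device $d'':=\max(d',d_Y)$, which is not in the paper at all. That trick checks out: $d''$ is a metric, is sandwiched so as to remain coarsely equivalent to $d_Y$, receives $f$ coarsely Lipschitzly since the pointwise max of two affine controls is affinely dominated, and satisfies $d_Y\prec d''$, so maximality forces $d''\prec d_Y$ and hence $d'\prec d_Y$. What the paper's route buys is a direct demonstration that the coequaliser structure itself dominates every competitor metric (which is then reused in Proposition~\ref{prop:promote}); what yours buys is that the equivalence $(3)\Leftrightarrow(4)$ is seen to be a purely order-theoretic fact about the poset of metrics, independent of the categorical machinery. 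Your $(1)\Rightarrow(2)$ traces the factorisation $Q_\sigma(f)\twoheadrightarrow U^\Theta_\sigma\hookrightarrow Y^\Theta_\sigma\to Y^\Theta_\infty\to Y$ slightly more explicitly than the paper (which simply concludes that $\bar f_\sigma$ is monic), and your final appeal to extremality to invert $Y^\Theta_\infty\to Y$ duplicates a step already inside the proof of Lemma~\ref{lem:ext-stab}, but this is harmless.
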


  \proof
  (1) $\implies$ (2).
  Consider the factorisation $X \xrightarrow{q_\sigma} Q_\sigma(f) \xrightarrow{f_\sigma} Y$ of $f$ for $\sigma \geq 0$. The weight function $\Theta(t) = 2^t$ satisfies the hypotheses of Lemmas \ref{lem:ext-qi}, \ref{lem:image-qi}, \ref{lem:lower}, and \ref{lem:ext-stab}, hence $\bar f_\sigma$ is monic in $\barCL$ for $\sigma$ large. Since $\bar f$ is an extremal epi, $\bar f_\sigma$ is an isomorphism in $\barCL$, hence $\bar f$ is a regular epi by Lemma \ref{lem:regular-coarse}.

  (2) $\implies$ (3). Let $d'$ be a metric on $Y$ such that the map $m \from (Y,d') \to (Y,d_Y)$ given by the underlying identity is a coarse equivalence and $f' \from X \to (Y,d')$ is coarsely Lipschitz. By Lemma \ref{lem:regular-coarse}, $K_\sigma(f) \rightrightarrows X \xrightarrow{\bar f} (Y,d)$ is a coequaliser diagram in $\barCL$ for $\sigma$ large. Note that $\bar f' = \bar m \bar f$ coequalises the parallel pair $K_\sigma(f) \rightrightarrows X$. Therefore, by the universal property, there exists a unique morphism $\bar g \from (Y,d_Y) \to (Y,d')$ in $\barCL$ such that $\bar f' = \bar g \bar f$. Since $\bar f$ is an epi in $\barBorn$, we deduce that $\bar g = \bar m$. Therefore, $m$ is coarsely Lipschitz, hence $d' \prec d_Y$.

  (3) $\implies$ (4). Trivial.

  (4) $\implies$ (1). Assume $\bar f$ factorises as $X \xrightarrow{\bar e} Z \xrightarrow{\bar m} Y$ of $\bar f$ in $\barCL$, with $\bar m$ a mono. Since $\bar f$ is an epi, so is $\bar m$. Thus, $m \from Z \to Y$ is a coarse equivalence. By Lemma \ref{lem:coarse-qi}, $m$ factorises as $Z \xrightarrow{g} (Y,d') \xrightarrow{m'} (Y,d)$ for some metric $d'$ with $g$ a quasi-isometry and $m'$ a coarse equivalence coinciding with the underlying identity. Note that $\bar m' = \bar g^{-1} \bar m$ is a morphism in $\barCL$. Now, $f' := ge$ is coarsely Lipschitz and coincides with $f$ on underlying sets. By the maximality assumption, $(m')^{-1}$ is coarsely Lipschitz, hence $\bar m'$ is an isomorphism in $\barCL$. Therefore, $\bar m = \bar g \bar m'$ is an isomorphism in $\barCL$ and so $\bar f$ is an extremal epi.
  \endproof

   \begin{example}
   By \cite[Theorem 1.8]{Tang-rips}, a metric space $(Y,d)$ is quasigeodesic (or, equivalently, quasi-isometric to a graph with the combinatorial metric) if and only if $d$ is $\prec$--maximal among all coarsely equivalent metrics $d'$ on $Y$. Therefore, any epimorphism in $\barCL$ to a quasigeodesic space is extremal. For a partial converse, take $X$ to be the space with underlying set $Y$ equipped with the metric $d_X(y,y') = \infty$ for all distinct $y,y' \in Y$. Then by Proposition \ref{prop:ext-reg}, if the underlying identity $f \from X \to Y$ is extremal in $\barCL$ then $Y$ is quasigeodesic.
   \end{example}

  Next, we give a criterion for promoting an epi in $\barCL$ to an extremal epi via a coarsely equivalent remetrisation of the codomain metric.

  \begin{proposition}\label{prop:promote}
   Let $f \from (X,d_X) \to (Y,d_Y)$ be a coarsely Lipschitz map and assume $N_r f(X) = Y$ for some $r \geq 0$. Then the following are equivalent.
   \begin{enumerate}
    \item $\bar f$ is a regular epi in $\barBorn$,
    \item there exists a $\prec$--greatest element among all metrics $d'$ on $Y$ coarsely equivalent to $d_Y$ such that the map $X \to (Y,d')$ coinciding with $f$ is coarsely Lipschitz, and
    \item there exists a $\prec$--maximal element among all metrics $d'$ on $Y$ coarsely equivalent to $d_Y$ such that the map $X \to (Y,d')$ coinciding with $f$ is coarsely Lipschitz, and
    \item $\bar f$ factors as $\bar f = \bar m \bar e$ with $\bar e$ an extremal epi and $\bar m$ a mono in $\barCL$.
   \end{enumerate}
   If any of the above hold, then a relatively maximal metric is realised by $\partial_\sigma^\Theta$ for $\sigma$ large, where $\Theta$ is any weight function satisfying the hypothesis of Lemma \ref{lem:image-qi}.
   \end{proposition}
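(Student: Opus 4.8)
The plan is to prove the cycle $(1)\Rightarrow(2)\Rightarrow(3)\Rightarrow(1)$ together with $(1)\Rightarrow(4)\Rightarrow(1)$, and to read the final ``moreover'' statement off the proof of $(1)\Rightarrow(2)$. The crux is $(1)\Rightarrow(2)$; the remaining implications are short applications of Proposition~\ref{prop:ext-reg}, Corollary~\ref{cor:reg-epi-enlarge}, and the morphism characterisations of Proposition~\ref{prop:morphisms}.

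For $(1)\Rightarrow(2)$, fix a weight function $\Theta$ satisfying the hypothesis of Lemma~\ref{lem:image-qi} and assume $\bar f$ is a regular epi in $\barBorn$. By Lemma~\ref{lem:regular-coarse} applied in $\barBorn$, the canonical map $\bar f_\sigma\from Q_\sigma(f)\to Y$ is an isomorphism in $\barBorn$ for all sufficiently large $\sigma$; fix such a $\sigma$, also requiring $\sigma\geq r$. Combining Lemmas~\ref{lem:ext-qi} and~\ref{lem:image-qi}, the composite $p\from Q_\sigma(f)\twoheadrightarrow U_\sigma^\Theta\hookrightarrow Y_\sigma^\Theta$ is a quasi-isometry, so $\bar p$ is an isomorphism both in $\barCL$ and in $\barBorn$. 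Now $p$ appears in the factorisation of $f_\sigma$ recorded just before Lemma~\ref{lem:ext-qi} in two useful ways. First, on underlying sets $f_\sigma=m\circ p$, where $m\from(Y,\partial_\sigma^\Theta)\to(Y,d_Y)$ is the underlying identity; as $\bar f_\sigma$ and $\bar p$ are isomorphisms in $\barBorn$, so is $\bar m=\bar f_\sigma\bar p^{-1}$, whence $\partial_\sigma^\Theta$ is coarsely equivalent to $d_Y$ (Proposition~\ref{prop:morphisms}). Second, writing $f'\from X\to Y_\sigma^\Theta$ for the map coinciding with $f$, we have $f'=p\circ q_\sigma$, where $q_\sigma\from X\to Q_\sigma(f)$ is a coequaliser of $K_\sigma(f)\rightrightarrows X$ and hence a regular epi in $\barCL$; a coequaliser composed with an isomorphism is again a coequaliser, so $\bar f'$ is a regular epi in $\barCL$. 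Since $f'$ is also coarsely surjective (as $f$ is), Proposition~\ref{prop:ext-reg} applied to the epi $\bar f'$ shows that $\partial_\sigma^\Theta$ is $\prec$-greatest among all metrics coarsely equivalent to it --- equivalently, to $d_Y$ --- for which the map coinciding with $f$ is coarsely Lipschitz. As $\partial_\sigma^\Theta$ itself belongs to this collection, this proves $(2)$ and realises a relatively maximal metric as $\partial_\sigma^\Theta$.

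The implication $(2)\Rightarrow(3)$ is immediate. For $(3)\Rightarrow(1)$, let $d'$ be $\prec$-maximal in the collection of item $(3)$ and let $f'\from X\to(Y,d')$ be the map coinciding with $f$; it is coarsely Lipschitz and coarsely surjective, so $\bar f'$ is an epi in $\barCL$, and since $d'\sim d_Y$ the collection is unchanged when reformulated relative to $f'$ and $d'$. Hence Proposition~\ref{prop:ext-reg} makes $\bar f'$ an extremal --- so regular --- epi in $\barCL$, which by Corollary~\ref{cor:reg-epi-enlarge} is a regular epi in $\barBorn$. The underlying identity $m\from(Y,d')\to(Y,d_Y)$ is a coarse equivalence, hence an isomorphism in $\barBorn$, so $\bar f=\bar m\bar f'$ is a regular epi in $\barBorn$. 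For $(1)\Rightarrow(4)$, choose $\sigma$ large as in $(1)\Rightarrow(2)$; we saw that $\bar f'\from X\to(Y,\partial_\sigma^\Theta)$ is a regular --- so extremal --- epi in $\barCL$ and that $\partial_\sigma^\Theta$ is $\prec$-greatest, so in particular $d_Y\prec\partial_\sigma^\Theta$, i.e.\ the underlying identity $m\from(Y,\partial_\sigma^\Theta)\to(Y,d_Y)$ is coarsely Lipschitz; being also a coarse equivalence it admits a lower control, so $\bar m$ is a mono in $\barCL$, and $\bar f=\bar m\bar f'$ witnesses $(4)$. Finally, for $(4)\Rightarrow(1)$, write $\bar f=\bar m\bar e$ in $\barCL$ with $\bar e$ an extremal --- hence, by Proposition~\ref{prop:ext-reg}, regular --- epi and $\bar m$ a mono; then $\bar e$ is a regular epi in $\barBorn$ by Corollary~\ref{cor:reg-epi-enlarge}. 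As $\bar f=\bar m\bar e$ is an epi in $\barBorn$, so is $\bar m$; and as $\bar m$ is a mono in $\barCL$, a representative admits a lower control. That representative is therefore a coarse equivalence, so $\bar m$ is an isomorphism in $\barBorn$ and $\bar f=\bar m\bar e$ is a regular epi in $\barBorn$.

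I expect the main obstacle to be the bookkeeping in $(1)\Rightarrow(2)$: one must track carefully which isomorphism and epimorphism statements live in $\barCL$ and which in $\barBorn$ --- the quasi-isometry $p$ serves as an isomorphism in both, whereas the coarse equivalence $m$ need not be coarsely Lipschitz and so is only an isomorphism in $\barBorn$ --- and one must verify that $f_\sigma$ and $f'$ genuinely decompose through $p$ on underlying sets, as claimed in the paragraph preceding Lemma~\ref{lem:ext-qi}. A related subtlety, exploited in $(1)\Rightarrow(4)$, is that it is the full $\prec$-greatest conclusion, not merely coarse equivalence of $\partial_\sigma^\Theta$ with $d_Y$, that promotes $m$ to a morphism of $\barCL$.
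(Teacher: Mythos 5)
Your proposal is correct and follows essentially the same route as the paper: the crux $(1)\Rightarrow(2)$ uses the identical factorisation of $f$ through $Q_\sigma(f)$ and the augmented Rips metric $\partial_\sigma^\Theta$ via Lemmas \ref{lem:ext-qi} and \ref{lem:image-qi}, together with Proposition \ref{prop:ext-reg}, and the remaining implications invoke the same results (Corollary \ref{cor:reg-epi-enlarge}, Proposition \ref{prop:morphisms}). The only differences are cosmetic reorderings of the implication cycle (you prove $(3)\Rightarrow(1)$ and $(1)\Rightarrow(4)$ directly where the paper routes through $(2)$), which change nothing of substance.
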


  \proof
  (1) $\implies$ (2).
  Let $\Theta$ be any weight function satisfying the hypothesis of Lemma \ref{lem:image-qi}. For $\sigma \geq 0$, consider the factorisation
  \[X \xrightarrow{q_\sigma} Q_\sigma(f) \xrightarrow{g_\sigma} (Y, \partial^\Theta_\sigma) \xrightarrow{h_\sigma} (Y,d_Y)\]
  of $f$, where $g_\sigma$ and $h_\sigma$ respectively coincide with  $f$ and the identity as functions. By construction, $\bar q_\sigma \from X \to Q_\sigma(f)$ is a regular epi in $\barCL$ for all $\sigma \geq 0$. Appealing to Lemmas \ref{lem:ext-qi}, \ref{lem:image-qi}, and coarse surjectivity of $f$, we deduce that $g_\sigma$ is a quasi-isometry, hence $\bar g_\sigma$ is an isomorphism in $\barCL$. Therefore, $\bar g_\sigma \bar q_\sigma$ is a regular epi in $\barCL$. By assumption, $\bar f$ is a regular epi in $\barBorn$ and so, by Proposition \ref{prop:reg-coarse}, we may choose $\sigma$ large so that $f_\sigma = h_\sigma g_\sigma \from Q_\sigma(f) \to Y$ is a coarse equivalence. In particular, $\partial_\sigma^\Theta$ and $d_Y$ are coarsely equivalent. Suppose that $d'$ is a metric on $Y$ such that the map $k \from (Y,d_Y) \to (Y,d')$ given by the underlying identity is coarse equivalence, and that $f':= kf \from X \to (Y,d')$ is coarsely Lipschitz.  Since $\bar g_\sigma \bar q_\sigma$ is an extremal epi in $\barCL$, it follows from Proposition \ref{prop:ext-reg} that $d' \prec \partial^\Theta_\sigma$.

  (2) $\iff$ (3). This follows from Proposition \ref{prop:ext-reg}.

  (2) $\implies$ (4). Let $d'$ be a metric on $Y$ satisfying the given $\prec$--greatest condition. Then, by Proposition \ref{prop:ext-reg}, $X \to (Y,d') \to (Y,d_Y)$ yields an extremal epi--mono factorisation of $\bar f$ in $\barCL$.

  (4) $\implies$ (1).
  By Corollary \ref{cor:reg-epi-enlarge}, $\bar e$ is a regular epi in $\barBorn$. Since $\bar f$ is an epi in $\barBorn$, so is $\bar m$. Therefore, $\bar m$ is an isomorphism in $\barBorn$. Consequently, $\bar f$ is a regular epi in $\barBorn$.
  \endproof

   If such a relatively maximal metric exists then it is unique up to quasi-isometry; this implies that the filtration $Y_*^\Theta$ stabilises in $\barCL$. Appealing to Proposition \ref{prop:reg-coarse} and Lemma \ref{lem:ext-qi}, we deduce:

  \begin{corollary}
   Let $\bar f$ be an epi in $\barCL$ and $f$ be a representative. Then $Q_*(f)$ stabilises in $\barBorn$ if and and only if it stabilises in $\barCL$. \qed
  \end{corollary}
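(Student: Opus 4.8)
The plan is to prove the two directions separately. The backward direction is formal: if $Q_*(f)$ stabilises in $\barCL$, then for $\sigma\leq\tau$ large the bonding maps $Q_\sigma(f)\twoheadrightarrow Q_\tau(f)$ represent quasi-isometries; since every quasi-isometry is a coarse equivalence, the same maps represent isomorphisms in $\barBorn$ by Proposition \ref{prop:morphisms}, so $Q_*(f)$ stabilises in $\barBorn$. This uses only that the bonding maps are actual (surjective, $1$--Lipschitz) maps, and nothing specific to $Q_*(f)$.

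For the forward direction, assume $Q_*(f)$ stabilises in $\barBorn$. Since $\bar f$ is an epi, $f$ is coarsely surjective, say $N_r f(X)=Y$, and Proposition \ref{prop:reg-coarse} gives that $\bar f$ is a regular epi in $\barBorn$. Fix any weight function $\Theta$ satisfying the hypothesis of Lemma \ref{lem:image-qi} (e.g.\ $\Theta\equiv 1$). Proposition \ref{prop:promote} then yields a $\prec$--greatest element among the metrics on $Y$ coarsely equivalent to $d_Y$ making $f$ coarsely Lipschitz, and its closing clause says that for all $\sigma$ large this relatively maximal metric is realised by $\partial_\sigma^\Theta$. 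Now a $\prec$--greatest element of a family of metrics is unique up to quasi-isometry via the underlying identity: if $d_1,d_2$ are both greatest then $d_1\prec d_2$ and $d_2\prec d_1$, so both $\mathrm{id}\from(Y,d_1)\to(Y,d_2)$ and its inverse are coarsely Lipschitz, whence $\mathrm{id}$ is a quasi-isometry. Applying this to $\partial_\sigma^\Theta,\partial_\tau^\Theta$ for $\sigma\leq\tau$ large shows the bonding map $Y_\sigma^\Theta\to Y_\tau^\Theta$, which is the underlying identity on $Y$, is a quasi-isometry; that is, $Y_*^\Theta$ stabilises in $\barCL$.

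It remains to transfer stability back from $Y_*^\Theta$ to $Q_*(f)$. By Lemma \ref{lem:ext-qi} the map $Q_\sigma(f)\twoheadrightarrow U_\sigma^\Theta$ is a quasi-isometry for every $\sigma$, and by Lemma \ref{lem:image-qi} the inclusion $U_\sigma^\Theta\hookrightarrow Y_\sigma^\Theta$ is a quasi-isometry for $\sigma\geq r$; hence $Q_\sigma(f)\to Y_\sigma^\Theta$ is a quasi-isometry for $\sigma$ large. These maps, the bonding maps $Q_\sigma(f)\to Q_\tau(f)$, and the bonding maps $Y_\sigma^\Theta\to Y_\tau^\Theta$ fit into a square that commutes strictly, since all four sides coincide with $f$ or with an identity on underlying sets, hence commutes in $\barCL$. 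For $\sigma\leq\tau$ large, three of its sides represent isomorphisms in $\barCL$, forcing the fourth, $Q_\sigma(f)\to Q_\tau(f)$, to represent one as well; thus $Q_*(f)$ stabilises in $\barCL$. I do not anticipate a serious obstacle here: the only delicate point is that the relatively maximal metric is literally realised by $\partial_\sigma^\Theta$ for \emph{all} $\sigma$ large (not merely some single threshold), which is exactly what the closing clause of Proposition \ref{prop:promote} provides; the remainder is diagram-chasing with quasi-isometries.
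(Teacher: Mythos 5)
Your proposal is correct and follows essentially the same route as the paper: deduce regularity in $\barBorn$ from Proposition \ref{prop:reg-coarse}, invoke Proposition \ref{prop:promote} to realise the relatively maximal metric by $\partial_\sigma^\Theta$ for all $\sigma$ large, use uniqueness of $\prec$--greatest elements up to quasi-isometry to stabilise $Y_*^\Theta$ in $\barCL$, and transfer back to $Q_*(f)$ via Lemmas \ref{lem:ext-qi} and \ref{lem:image-qi}. The only difference is that you spell out the diagram chase and the trivial converse, which the paper leaves implicit.
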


  \begin{remark}
   Proposition \ref{prop:promote} does not require properness of $\Theta$. Indeed, by choosing $\Theta(t) = 1$, we may construct a $\prec$--maximal metric $\partial_\sigma^\Theta$ relative to $f$ by taking the standard Rips graph $\Rips_\sigma Y$ at sufficiently large scale $\sigma$, then adding the augmented edges.
  \end{remark}

  \subsection{Effective epis}

  \begin{proposition}\label{prop:eff-clip}
   A morphism $\bar f \from X \to Y$ in $\barCL$ is an effective epimorphism in $\barCL$ if and only $\bar f$ is a regular epimorphism in $\barCL$ and $K_*(f)$ stabilises for any representative $f$.
  \end{proposition}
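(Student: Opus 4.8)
The plan is to deduce the statement directly from the general categorical lemmas together with the filtration criterion for the existence of kernel pairs (Lemma \ref{lem:K-stab}), in the same spirit as the proof of Proposition \ref{prop:eff-coarse}. The only genuine point of care is that the shortcut available in $\barBorn$ — where coarse surjectivity of $f$ alone already forces a regular epi once $K_*(f)$ is stable — is unavailable in $\barCL$, so regularity has to be carried as a hypothesis.

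For the forward implication, suppose $\bar f$ is an effective epi in $\barCL$. By definition it is the coequaliser of its kernel pair, hence a regular epi in $\barCL$; moreover its kernel pair exists by definition, so Lemma \ref{lem:K-stab} gives that $K_*(f)$ stabilises for any representative $f$. (Recall that, since the bonding maps of $K_*(f)$ are isometric embeddings, stability amounts to those inclusions being coarsely surjective for $\sigma \leq \tau$ large, which is insensitive to whether one works in $\barBorn$ or $\barCL$.) For the converse, suppose $\bar f$ is a regular epi in $\barCL$ and that $K_*(f)$ stabilises. Then by Lemma \ref{lem:K-stab} the morphism $\bar f$ admits a kernel pair in $\barCL$, namely $\bar\pi_1\bar\iota_\sigma, \bar\pi_2\bar\iota_\sigma \from K_\sigma(f) \rightrightarrows X$ for $\sigma$ large. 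A regular epimorphism admitting a kernel pair is an effective epimorphism by Lemma \ref{lem:reg-kernel}, so $\bar f$ is an effective epi in $\barCL$, completing the proof.

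I do not expect a substantial obstacle here; the work has all been done in the cited lemmas. The one thing worth flagging in the write-up is \emph{why} the regular-epi hypothesis cannot be dropped in the converse, in contrast to Proposition \ref{prop:eff-coarse}. In $\barBorn$ one has the chain: $K_*(f)$ stable $\implies$ $Q_*(f)$ stable (Lemma \ref{lem:KQ-stab}) $\implies$ $\bar f$ regular epi (Proposition \ref{prop:reg-coarse}), the last step using that stability of $Q_*(f)$ upgrades $f_\sigma$ to a coarse equivalence (Lemma \ref{lem:Q-stab}). In $\barCL$ the analogue of Proposition \ref{prop:reg-coarse} genuinely fails: stability of $Q_*(f)$ makes $f_\sigma$ uniformly metrically proper but need not make it a quasi-isometry, so an epi with $K_*(f)$ stable is not automatically a regular epi in $\barCL$. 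Hence the hypothesis is needed, and the statement of the proposition is the best one can expect in this category.
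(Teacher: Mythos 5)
Your proof is correct and follows essentially the same route as the paper's (which simply invokes Lemma \ref{lem:K-stab} for the kernel-pair criterion and Lemma \ref{lem:reg-kernel} to conclude); your additional remark that the regular-epi hypothesis cannot be dropped is also accurate, as Example \ref{ex:cubes} gives an epi in $\barCL$ with stable $K_*(f)$ that is not regular.
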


  \proof
   By Lemma \ref{lem:K-stab}, $\bar f$ admits a kernel pair in $\barCL$ if and only if $K_*(f)$ stabilises for any representative $f$. The result follows using Lemma \ref{lem:reg-kernel}.
  \endproof

  \section{Non-implications and examples}

  \subsection{Strong does not imply regular in \texorpdfstring{$\barBorn$}{$\mathsf{Coarse}$}}

  \begin{example}[Comb graph]\label{ex:comb}
  Construct a graph in the integer lattice $\Z^2$ as follows. Begin with the infinite ray $[0,\infty) \times \{0\}$. Set current position at $(x,0) = (0,0)$. Then do the following:
  \begin{itemize}
   \item For each integer $n = 1, 2, \ldots$, do:
   \begin{itemize}
   \item For $j = 1, \ldots, n$, do: attach a vertical line segment from $(x,0)$ to $(x,n)$, then move current position to $(x+j, 0)$.
   \end{itemize}
  \end{itemize}
   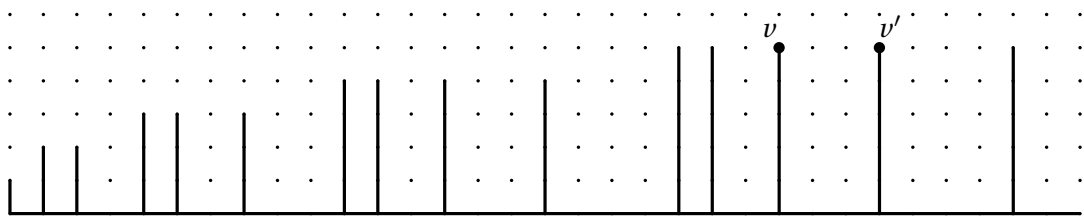
\begin{figure}[h]\label{fig:comb}
  \begin{center}
  \begin{tikzpicture}[scale=0.44,every path/.style= very thick]
        \draw (0,0) -- (32,0);
    \coordinate (current) at (0,0);
    \coordinate (xpos) at (0,0);

        \foreach \n in {1,...,5} {

                \foreach \j in {1,...,\n} {
                        \draw ($(xpos) + (0,0)$) -- ($(xpos) + (0,\n)$);

                        \coordinate (xpos) at ($(xpos) + (\j,0)$);
        }
    }

        \foreach \x in {0,1,...,32} {
        \foreach \y in {0,1,...,6} {
            \fill (\x,\y) circle (0.15em);
        }
    }

    \node at (22.7,4.7) [label=above:$v$] {};
    \node at (26.3,4.7) [label=above:$v'$] {};
    \fill (23,5) circle (0.5em);
    \fill (26,5) circle (0.5em);
\end{tikzpicture}
  \end{center}
  \caption{A portion of the comb graph with vertices labelled $v,v'$ as in the case $\sigma = 2$ and $n = 5$.}
  \end{figure}

  Let $X$ be the vertex set of this graph with the standard path metric. Let $Y$ be the underlying set of $X$ equipped with the induced $\ell^1$--metric from $\Z^2$. Let $f \from X \to Y$ be the underlying identity. By Proposition \ref{prop:strong-coarse}, $\bar f$ is a strong epi in $\barBorn$. By construction, for all integers $\sigma, n \geq 1$, there exists an integer $x \geq 0$ such that $v=(x,n)$ and $v'=(x + \sigma + 1, n)$ are vertices of $X$ appearing on consecutive vertical segments; see Figure \ref{fig:comb}. Since $d_Y(v,v') = \sigma + 1$, we deduce that $d_{Q_{\sigma+1}(f)}(v,v') = 1$. Now, any path from $v$ to $v'$ in $Q_\sigma(f)$ must pass through some vertex $u$ on the horizontal segment between $(x,0)$ and $(x + \sigma + 1, 0)$. It follows that
  \[d_{Q_\sigma(f)}(v,v') \geq d_{Q_\sigma(f)}(v,u) + d_{Q_\sigma(f)}(u,v') \geq \frac{2n}{\sigma}.\]
  Since $n \geq 1$ is arbitrary, we deduce that $Q_\sigma(f) \to Q_{\sigma+1}(f)$ is not a coarse equivalence. Therefore, $Q_*(f)$ does not stabilise in $\barBorn$, and so $\bar f$ is not a regular epi by Proposition \ref{prop:reg-coarse}.
  \end{example}

  \begin{remark}
  This also yields an example of an epimorphism in $\barCL$ which does not admit a relatively maximal metric on the codomain, as in Proposition \ref{prop:promote}.
  \end{remark}

  \subsection{Epi does not imply extremal in \texorpdfstring{$\barCL$}{$\mathsf{CLip}$}}

  \begin{example}[Cubes to squares]\label{ex:cubes}
   Let $X = \set{n^3 \st n \in \N}$ and $Y = \set{n^2 \st n \in \N}$ equipped with the Euclidean metrics. Define $f \from X \to Y$ by $n^3 \mapsto n^2$. Then $f$ is a coarse equivalence but not a quasi-isometry. Thus $\bar f$ is monic and epic in $\barCL$, but not an isomorphism. Since $\bar f$ factorises as $\bar f =  \bar f \bar 1_X$, it follows that $\bar f$ is not an extremal epi in $\barCL$
  \end{example}

  \begin{remark}
   Example \ref{ex:cubes} also shows that an isomorphism in $\barBorn$ which is coarsely Lipschitz is not necessarily an extremal epi in $\barCL$.
  \end{remark}

  \subsection{Split does not imply effective}

  \begin{example}[Comb retraction]\label{ex:comb-retract}
   Let $X$ be the comb graph as given in Example \ref{ex:comb}. Equip $[0,\infty)$ with the Euclidean metric and define a 1--Lipschitz map $f \from X \to [0,\infty)$ by $(x,y) \mapsto x$.    Observe that $\bar f$ is a split epi in $\barcat$ since the isometric embedding $x \mapsto (x,0)$ represents a right-inverse. Given integers $\sigma, n \geq 1$, choose vertices $v,v'$ of $X$ as in Example \ref{ex:comb}. Observe that $(v,v') \in K_{\sigma+1}(f)$. Now, any vertex $u$ (resp.~$u'$) of $X$ within distance $n$ of $v$ (resp.~$v'$) must lie on the vertical segment of $X$ containing $v$ (resp.~$v'$). In particular $|f(u) - f(u')| = \sigma + 1$ and so $(u,u')\notin K_{\sigma}(f)$. Therefore $K_{\sigma + 1}(f) \not\subseteq N_n(K_\sigma(f))$. Since $n$ is arbitrary,  $K_*(f)$ does not stabilise, hence $\bar f$ is not an effective epi in $\barcat$ by Lemma \ref{lem:K-stab}.
  \end{example}

    Another example of a split, but not effective, epi is the closest point projection $f \from \Hy^2 \to \gamma$ from the hyperbolic plane to any bi-infinite geodesic.

  \subsection{Effective does not imply split}

  Let $Z$ and $G$ be finitely generated groups. Consider a short exact sequence of groups
  \[1 \to Z \hookrightarrow E \xrightarrow{f} G \to 1.\]
  Choose finite generating sets $S_Z$ and $S_G$ for $Z$ and $G$ respectively. Let $\hat S_G \subseteq E$ be a set of lifts for $S_G$ under $f$ (i.e.,~$f$ restricts to a bijection $\hat S_G \to S_G$). Then $S_E := S_Z \cup \hat S_G$ is a finite generating set $S_E$. Equip each group with their corresponding word metrics.

  \begin{lemma}\label{lem:eff-group}
   The morphism $\bar f \from E \to G$ is an effective epimorphism in $\barcat$.
  \end{lemma}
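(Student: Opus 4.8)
The plan is to verify the hypotheses of Lemma \ref{lem:reg-kernel}: it suffices to show that $\bar f$ is a regular epimorphism and that it admits a kernel pair. For the kernel pair, by Lemma \ref{lem:K-stab} I must check that $K_*(f)$ stabilises; for regularity, by Lemma \ref{lem:regular-coarse} it is enough to show that the induced map $f_\sigma \from Q_\sigma(f) \to G$ is a quasi-isometry for $\sigma$ large. Two preliminary observations: each generator in $S_E = S_Z \cup \hat S_G$ is sent by $f$ to an element of $G$ of word length at most $1$ (the generators from $S_Z$ to the identity, those from $\hat S_G$ to $S_G$), so $f$ is $1$--Lipschitz; and $f$ is surjective, hence coarsely surjective, so $\bar f$ is an epimorphism in $\barcat$ by Proposition \ref{prop:morphisms}.

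First I would show that $K_*(f)$ stabilises at threshold $0$, that is, that the coarse kernel collapses immediately to the genuine kernel pair $K_0(f) = \set{(e,e') \in E \times E \st f(e) = f(e')}$. Given $(e_1,e_2) \in K_\sigma(f)$, put $g = f(e_1)^{-1} f(e_2) = f(e_1^{-1}e_2)$, so $\abs{g}_{S_G} = d_G(fe_1, fe_2) \leq \sigma$. Lifting a shortest word for $g$ letter-by-letter to $E$ using the lifts $\hat S_G$ of $S_G$ produces $\hat g \in E$ with $f(\hat g) = g$ and $\abs{\hat g}_{S_E} \leq \abs{g}_{S_G} \leq \sigma$. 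Then $b := e_2 \hat g^{-1}$ satisfies $f(b) = f(e_2) g^{-1} = f(e_1)$, so $(e_1,b) \in K_0(f)$, while $d_{E \times E}\big((e_1,e_2),(e_1,b)\big) = d_E(e_2,b) = \abs{\hat g}_{S_E} \leq \sigma$. Hence $K_\sigma(f) \subseteq N_\sigma(K_0(f))$, so every bonding inclusion $K_\sigma(f) \hookrightarrow K_\tau(f)$ is coarsely surjective; thus $K_*(f)$ stabilises and $\bar f$ admits a kernel pair by Lemma \ref{lem:K-stab}.

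Next I would analyse $Q_\sigma(f)$ directly. By the coarse gluing construction, $Q_\sigma(f)$ has vertex set $E$, an internal edge of weight $d_E(e,e')$ between each pair of distinct points (all finite, as $E$ is finitely generated), and a weight--$1$ glued edge between $e$ and $e'$ whenever $(e,e') \in K_\sigma(f)$, i.e.~whenever $d_G(fe,fe') \leq \sigma$. Along any edge-path in $Q_\sigma(f)$ of length $L$, an internal edge changes the $d_G$--distance of the $f$--images by at most its weight, while a glued edge changes it by at most $\sigma$ and contributes $1$ to $L$; hence $f_\sigma$ is $(1+\sigma)$--Lipschitz. For an affine lower control: given $e,e' \in E$ with $g = f(e^{-1}e')$ and $\hat g$ as above, there is an internal edge of weight $\abs{\hat g}_{S_E} \leq \abs{g}_{S_G}$ from $e$ to $e\hat g$, and since $f(e\hat g) = f(e')$ a weight--$1$ glued edge from $e\hat g$ to $e'$; therefore $d_{Q_\sigma(f)}(e,e') \leq d_G(fe,fe') + 1$. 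Together with coarse surjectivity of $f$, this shows $f_\sigma$ is a quasi-isometry, so $\bar f_\sigma$ is an isomorphism in $\barcat$ and $\bar f$ is a regular epimorphism by Lemma \ref{lem:regular-coarse}. Being a regular epimorphism that admits a kernel pair, $\bar f$ is then an effective epimorphism by Lemma \ref{lem:reg-kernel}.

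The main point needing care — and essentially the only obstacle — will be the coarse-kernel collapse in the second paragraph: one must translate the \emph{second} coordinate by $\hat g^{-1}$ so as to land in the $Z$--coset of $e_1$ (translating by $\hat g$ instead lands in the coset of $e_2$, which is the wrong one). The remaining estimates should be routine unwindings of the graph metrics; in particular the lift map $g \mapsto \hat g$ is $1$--Lipschitz, which keeps all constants uniform, and although the Lipschitz constant of $f_\sigma$ grows with $\sigma$, this causes no difficulty since we only need $\bar f_\sigma$ to be an isomorphism for $\sigma$ large (indeed it is one for every $\sigma \geq 0$).
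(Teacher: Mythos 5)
Your proof is correct and follows essentially the same strategy as the paper: lift words through $\hat S_G$ to show $K_\sigma(f) \subseteq N_\sigma(K_0(f))$, verify that $Q_\sigma(f) \to G$ is a quasi-isometry, and combine regularity with the existence of the kernel pair. The only cosmetic differences are that you estimate $f_\sigma$ directly for every $\sigma$ (where the paper treats $\sigma = 0$ and cites \cite[Lemma 4.2]{DPRT21}) and you invoke Lemma \ref{lem:reg-kernel} rather than Propositions \ref{prop:eff-coarse} and \ref{prop:eff-clip}; both are fine.
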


  \proof
  First, we show that $\bar f$ is a regular epimorphism in $\barcat$.
  Observe that $(a,b) \in K_0(f)$ if and only $aZ = bZ$. Therefore, $Q_0(f)$ is the Cayley graph of $E$ with respect to the generating set $(Z \cup \hat S_G)\setminus \set{1}$.     The map $Q_0(f) \to G$ induced by $f$ collapses each coset $aZ$ to a point $f(a)$; moreover, distinct cosets $aZ$, $bZ$ map to adjacent vertices if and only if some vertex of $aZ$ is adjacent to a vertex in $bZ$. Since each coset has diameter at most 1 in $Q_0(f)$, this map is a quasi-isometry (see \cite[Lemma 4.2]{DPRT21}). Therefore, $\bar f$ is a regular epi in $\barcat$ by Lemma \ref{lem:regular-coarse}.

  Next, we show that the coarse kernel filtration $K_*(f)$ stabilises. Suppose that $(a,b) \in K_\sigma(f)$ for some $\sigma \geq 0$. Then $a,b\in E$ satisfy $d_{G}(1, f(a^{-1}b)) = d_{G}(f(a), f(b)) \leq \sigma$.     Thus $f(a^{-1}b) \in G$ can be expressed as a word in at most $\sigma$ generators. By replacing each generator in this word with its lift in $\hat S_G$, we obtain an element $h \in a^{-1}bZ$ such that $d_E(1,h) \leq \sigma$. Note that $ahZ = bZ$, hence $(ah, b) \in K_0(f)$. Since $d_E(a, ah) = d_E(1,h)\leq \sigma$, we deduce   $(a,b) \in N_\sigma(K_0(f))$. Therefore $K_*(f)$ stabilises, hence $\bar f$ is an effective epi in $\barcat$ by Propositions \ref{prop:eff-coarse} and \ref{prop:eff-clip}.
      \endproof

  Now, we show that any right inverse of $\bar f$ is realised by a Lipschitz section of $f$.

  \begin{lemma}\label{lem:true-section}
   Suppose that $\bar f \from E \to G$ admits a right inverse $\bar s \from G \to E$ in $\barcat$. Then there exists a Lipschitz representative $s \in \bar s$ such that $fs$ is the identity map on $G$.
  \end{lemma}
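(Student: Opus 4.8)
The plan is to repair the coarse section $\bar s$ in two steps: first replace a chosen representative $s$ by a genuine set-theoretic section of $f$ that stays boundedly close to $s$, and then observe that this proximity to the controlled map $s$ automatically upgrades the new section to a Lipschitz map.

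First I would fix a representative $s \in \bar s$. Since $\bar f \bar s = \bar 1_G$, there is an integer $\kappa \geq 0$ with $d_G(f(s(g)), g) \leq \kappa$ for all $g \in G$; I also fix an upper control $\rho$ for $s$, which exists whether $\barcat$ is $\barBorn$ or $\barCL$ (affine in the latter case), noting in particular that $\rho(1) < \infty$. For each $g \in G$ set $h(g) := f(s(g))^{-1} g \in G$, so $d_G(1, h(g)) \leq \kappa$ by left-invariance of the word metric. Exactly as in the proof of Lemma \ref{lem:eff-group}, I would fix for each of the finitely many $w \in G$ with $d_G(1,w) \leq \kappa$ an expression of $w$ as a word of length $\leq \kappa$ in $S_G^{\pm 1}$, and let $\widehat w \in E$ be the corresponding word in $\hat{S}_G^{\pm 1}$, so that $f(\widehat w) = w$ and $d_E(1, \widehat w) \leq \kappa$. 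Then I define $s' \from G \to E$ by $s'(g) := s(g)\, \widehat{h(g)}$. Since $f$ is a homomorphism, $f(s'(g)) = f(s(g))\, h(g) = g$, so $fs' = \mathrm{id}_G$; and $d_E(s'(g), s(g)) = d_E(1, \widehat{h(g)}) \leq \kappa$, whence $s' \approx_\kappa s$ and so $\bar{s'} = \bar s$.

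It then remains to check that $s'$ is Lipschitz. For any $g \in G$ and generator $a \in S_G^{\pm 1}$, the triangle inequality gives
\[
 d_E(s'(g), s'(ga)) \leq d_E(s'(g), s(g)) + d_E(s(g), s(ga)) + d_E(s(ga), s'(ga)) \leq 2\kappa + \rho(1) =: L,
\]
using $d_G(g, ga) \leq 1$ and monotonicity of $\rho$. Given arbitrary $g, g' \in G$, I would take a word geodesic $g = g_0, g_1, \dots, g_n = g'$ in the Cayley graph, where $n = d_G(g,g')$ and each $g_{i+1} = g_i a_i$ with $a_i \in S_G^{\pm 1}$, and sum along it:
\[
 d_E(s'(g), s'(g')) \leq \sum_{i=0}^{n-1} d_E(s'(g_i), s'(g_{i+1})) \leq Ln = L\, d_G(g,g').
\]
Hence $s'$ is $L$--Lipschitz, and $s' \in \bar s$ is the sought representative.

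The step I expect to carry the real content is the final Lipschitz bound rather than the algebra: an arbitrary section of $f$ need not be controlled at all, since each fibre $f^{-1}(a)$ is a coset of the infinite subgroup $Z$ and so has infinite diameter in $E$. What makes the argument work is that $s'$ is forced to stay within $\kappa$ of the controlled map $s$, so that the displacement of $s'$ on adjacent group elements is governed by the single finite quantity $\rho(1)$; the role of the lift $\widehat{h(g)}$ is simply to pin $fs'$ to the identity exactly rather than merely up to closeness.
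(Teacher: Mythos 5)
Your proposal is correct and follows essentially the same route as the paper: start from any representative, right-multiply each value by a lift (through $\hat S_G$) of the short group element correcting $fs(g)$ to $g$ to obtain a genuine section at bounded distance from the original, then note that a controlled map out of a finitely generated group (a graph with the combinatorial metric) is automatically Lipschitz. The only difference is that you spell out the geodesic-summation argument for the final Lipschitz bound, which the paper leaves as a one-line remark.
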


  \proof
  Let $s' \in \bar s$ be any representative. Then there exists $\kappa \geq 0$ such that $fs' \approx_\kappa 1_G$. For any $g \in G$, we have that $d_G(fs'g, g) \leq \kappa$. Thus, we may right-multiply $s'g$ by a word of most $\kappa$ generators from $\hat S_G$ to obtain an element $s(g) \in f^{-1}(g)$. Therefore, there exists a section $s \from G \to E$ of $f$ satisfying $s \approx_\kappa s'$. Since $s'$ is controlled, so is $s$. The result follows since any controlled map from the vertex set of a graph (with the combinatorial metric) is Lipschitz.
  \endproof

  In the case of central extensions, a result of Kleiner--Leeb shows that the existence of a Lipschitz section implies that the extension group is quasi-isometric to a product.

  \begin{proposition}[{\cite[Proposition 8.3]{KL01}}]\label{prop:qi-trivial}
   Let $1 \to Z \hookrightarrow E \xrightarrow{f} G \to 1$ be a central extension, where $Z$ and $G$ are finitely generated. Then $f$ admits a Lipschitz section $s \from G \to E$ if and only if the extension is \emph{quasi-isometrically trivial}: there exists a quasi-isometry $\phi \from E \to Z \times G$ such that $f \approx \phi \circ \pi_{G}$, where $\pi_{G} \from Z \times G \to G$ is the projection to the $G$--factor. \qed

  \end{proposition}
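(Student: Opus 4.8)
The plan is to prove the two implications separately: one is routine, the other hinges on a distortion bound for the centre.

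The easy direction is that quasi-isometric triviality yields a Lipschitz section. Given a quasi-isometry $\phi \from E \to Z \times G$ with $f \approx \pi_G \circ \phi$, pick a quasi-inverse $\psi \from Z \times G \to E$ and set $s_0 := \psi \circ \iota$, where $\iota \from G \to Z \times G$ is the isometric embedding $g \mapsto (1_Z, g)$. Then $s_0$ is coarsely Lipschitz and $f \circ s_0 \approx \pi_G \circ \phi \circ \psi \circ \iota \approx 1_G$, so $d_G(f s_0(g), g) \leq \kappa$ for some fixed $\kappa$. Right-multiplying each $s_0(g)$ by a word of length $\leq \kappa$ in $\hat S_G$ carrying $f s_0(g)$ to $g$ produces a genuine section $s$ of $f$ with $s \approx_\kappa s_0$; being close to a controlled map, $s$ is controlled, and since $G$ carries a Cayley graph metric, $s$ is Lipschitz -- exactly the argument of Lemma \ref{lem:true-section}.

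For the converse I would use a Lipschitz section $s \from G \to E$ to trivialise the extension coarsely. Replacing $s$ by $g \mapsto s(1_G)^{-1} s(g)$ we may assume $s(1_G) = 1_E$. Since $Z$ is central, every $w \in E$ is uniquely $z \cdot s(g)$ with $g = f(w)$ and $z = w\, s(f(w))^{-1} \in Z$, so $\Psi \from Z \times G \to E$, $(z,g) \mapsto z\, s(g)$, is a bijection, and its inverse $\phi := \Psi^{-1}$ satisfies $\pi_G \circ \phi = f$ on the nose. It therefore suffices to show $\Psi$ is a quasi-isometry. From $S_Z \subseteq S_E$ we get $\lvert z \rvert_E \leq \lvert z \rvert_Z$, so using centrality and the upper control of $s$,
\[
 d_E\bigl(\Psi(z_1,g_1), \Psi(z_2,g_2)\bigr) = \bigl\lvert z_1^{-1} z_2 \, s(g_1)^{-1} s(g_2)\bigr\rvert_E \leq \lvert z_1^{-1} z_2 \rvert_Z + d_E\bigl(s(g_1), s(g_2)\bigr),
\]
which is affine in $d_Z(z_1,z_2) + d_G(g_1,g_2)$; thus $\Psi$ is coarsely Lipschitz, and bijective. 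For an affine lower control: the $1$--Lipschitz homomorphism $f$ gives $d_G(g_1,g_2) = d_G\bigl(f\Psi(z_1,g_1), f\Psi(z_2,g_2)\bigr) \leq d_E\bigl(\Psi(z_1,g_1),\Psi(z_2,g_2)\bigr)$, which also bounds $d_E(s(g_1),s(g_2))$; cancelling that factor from the displayed identity leaves $\lvert z_1^{-1} z_2 \rvert_E$ bounded by an affine function of $d_E(\Psi(z_1,g_1),\Psi(z_2,g_2))$. So the lower control reduces entirely to an affine bound on $\lvert z \rvert_Z$ in terms of $\lvert z \rvert_E$ for $z \in Z$; that is, to showing that $Z$ is undistorted in $E$.

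This undistortedness is the heart of the matter, and the step I expect to be the main obstacle. A Lipschitz section keeps the associated $2$--cocycle short on generators: for $h \in S_G$, putting $t = s(g)^{-1} s(gh)$ -- so that $\lvert t \rvert_E = d_E(s(g), s(gh)) \leq C\lvert h\rvert_G + C$ is bounded -- and using that $c(g,h) := s(g) s(h) s(gh)^{-1} \in Z$ is central, one gets $c(g,h) = s(g)^{-1} c(g,h) s(g) = s(h)\, t^{-1}$, whence $\lvert c(g,h) \rvert_E$ is bounded independently of $g$. The elements $\epsilon_a := s(f(a))^{-1} a \in Z$, $a \in S_E$, are likewise of bounded $E$--length. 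The crucial observation is that $E$ is finitely generated, so the ball $B_E(1_E, R)$ is finite for each $R$, hence only finitely many elements of $Z$ have $E$--length $\leq R$; therefore all the $c(g,h)$ ($h \in S_G$) and all the $\epsilon_a$ lie in a single finite set $\mathcal{C} \subseteq Z$, of $Z$--diameter $B < \infty$. Given $z \in Z$ with $\lvert z\rvert_E = n$, write $z = a_1 \cdots a_n$ geodesically in $S_E$, substitute $a_i = s(f(a_i))\,\epsilon_{a_i}$, pull the central factors $\epsilon_{a_i}$ to the front, and telescope $s(f(a_1)) \cdots s(f(a_n))$ via the cocycle identity; since the partial products in $G$ return to $f(z) = 1_G$, this exhibits $z$ as a product of at most $2n$ elements of $\mathcal{C}$, so $\lvert z \rvert_Z \leq 2Bn = 2B\lvert z\rvert_E$. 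With $\lvert z\rvert_E \leq \lvert z \rvert_Z$ this makes $Z \hookrightarrow E$ a quasi-isometric embedding, supplies the missing lower control for $\Psi$, and finishes the proof. The only genuine difficulty is this distortion bound and pinning down that finite generation of the whole group $E$ (not merely of $Z$ and $G$) is exactly what makes $\mathcal{C}$ finite; the remainder is formal, modulo bookkeeping in the telescoping decomposition.
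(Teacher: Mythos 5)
The paper offers no proof of this proposition --- it is imported as a black box from Kleiner--Leeb \cite[Proposition 8.3]{KL01} --- so there is nothing in-paper to compare against; judged on its own, your argument is correct and self-contained, and follows the standard cocycle route that underlies the cited result. The forward direction is, as you note, just Lemma \ref{lem:true-section} applied to a quasi-inverse of $\phi$ restricted to the slice $\{1_Z\}\times G$. In the converse you correctly identify the only non-formal point: the bijection $\Psi(z,g)=z\,s(g)$ is easily coarsely Lipschitz, and its lower control reduces (via the $1$--Lipschitz homomorphism $f$) to the undistortion of $Z$ in $E$, which genuinely requires the section since centres of finitely generated groups can be distorted (e.g.\ in the Heisenberg group of Example \ref{ex:heisenberg}, which is exactly why that extension is not split in $\barcat$). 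Your derivation of undistortion is sound: centrality gives $c(g,h)=s(h)\,t^{-1}$, so the cocycle values over $h\in S_G$ and the defects $\epsilon_a$ have uniformly bounded $E$--length; finiteness of balls in the finitely generated group $E$ then traps them in a finite subset of $Z$, hence of bounded $Z$--length, and the telescoping yields $|z|_Z\leq 2B|z|_E$. Two cosmetic remarks: what you actually use from $\mathcal{C}$ is $\max_{c\in\mathcal{C}}|c|_Z$ rather than its diameter, and the condition $f\approx\phi\circ\pi_G$ in the statement is a typo for $f\approx\pi_G\circ\phi$ --- the reading you (correctly) adopt, and which your $\phi=\Psi^{-1}$ satisfies on the nose.
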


  \begin{example}[Integer Heisenberg group]\label{ex:heisenberg}
  Let $H = \set{\begin{pmatrix}
                 1 & x & z \\ 0 & 1 & y \\ 0 & 0 & 1
                \end{pmatrix}
                \st x,y,z \in \Z
    }$ be the integer Heisenberg group and $Z = \set{\begin{pmatrix}
                 1 & 0 & z \\ 0 & 1 & 0 \\ 0 & 0 & 1
                \end{pmatrix}
                \st z \in \Z
    } \cong \Z$. These groups fit into a central extension $1 \to Z \hookrightarrow H \xrightarrow{f} \Z^2 \to 1$. By Lemma \ref{lem:eff-group}, $\bar f$ is an effective epi in $\barcat$. Since $H$ has quartic growth rate \cite[Lemma 4]{Mil68}\cite[VII.22]{Har00}, it is not quasi-isometric to $\Z^3 \cong Z \times \Z^2$ (which has cubic growth rate). Thus $\bar f$ is not a split epi in $\barcat$ by Lemma \ref{lem:true-section} and Proposition \ref{prop:qi-trivial}.
    \end{example}

  \subsection{Proofs of the main theorems}

  We finally establish the (non-)implications that do not follow from Proposition \ref{prop:implications} and Lemma \ref{lem:split}.

  \proofof{Theorem \ref{thm:epi-coarse}}
  In $\barBorn$, the implication epi $\implies$ strong epi is Proposition \ref{prop:strong-coarse}. Example \ref{ex:comb} shows that strong epis are not necessarily regular epis. The remaining non-implications follow from Examples \ref{ex:comb-retract} and \ref{ex:heisenberg}.
  \qed

  \proofof{Theorem \ref{thm:epi-clip}}
  In $\barCL$, the implication extremal epi $\implies$ regular epi is Proposition \ref{prop:ext-reg}. Example \ref{ex:cubes} demonstrates that not every epi is an extremal epi. The remaining non-implications follow using the same examples as for the proof of Theorem \ref{thm:epi-coarse}.   \qed

  \providecommand{\bysame}{\leavevmode\hbox to3em{\hrulefill}\thinspace}
\providecommand{\MR}{\relax\ifhmode\unskip\space\fi MR }
\providecommand{\MRhref}[2]{%
  \href{http://www.ams.org/mathscinet-getitem?mr=#1}{#2}
}
\providecommand{\href}[2]{#2}

  \end{document}